\def\th@plain{%
  \upshape %\itshape % body font
}
\renewenvironment{proof}[1][\proofname]{\par
  \pushQED{\qed}%
  \normalfont \topsep6\p@\@plus6\p@\relax
  \trivlist
  \item[\hskip\labelsep
        \bfseries
    #1\@addpunct{.}]\ignorespaces
}{%
  \popQED\endtrivlist\@endpefalse
}
\newtheorem{theorem}{Theorem}[section]
\newtheorem{lemma}[theorem]{Lemma}
\newtheorem{corollary}[theorem]{Corollary}
\newtheorem{proposition}{Proposition}
\newtheorem{conjecture}{Conjecture}
\newtheorem{case}{Case}
\newtheorem{subcase}{Subcase}[case]
\theoremstyle{definition}
\newtheorem{definition}{Definition}
\newtheorem{remark}{Remark}
\newcommand{\etal}{et~al.\ }
\newcommand{\ie}{i.e.,\ }
\begin{document}
\title{Cover and variable degeneracy}
\author{Fangyao Lu { }\quad Qianqian Wang { }\quad Tao Wang\footnote{\tt Corresponding
author: wangtao@henu.edu.cn; https://orcid.org/0000-0001-9732-1617}\\
{\small Center for Applied Mathematics}\\
{\small Henan University, Kaifeng, 475004, P. R. China}}
\date{}
\maketitle
\begin{abstract}
Let $f$ be a nonnegative integer valued function on the vertex set of a graph. A graph is \textbf{strictly $f$-degenerate} if each nonempty subgraph $\Gamma$ has a vertex $v$ such that $\mathrm{deg}_{\Gamma}(v) < f(v)$. In this paper, we define a new concept, strictly $f$-degenerate transversal, which generalizes list coloring, signed coloring, DP-coloring, $L$-forested-coloring, and $(f_{1}, f_{2}, \dots, f_{s})$-partition. A \textbf{cover} of a graph $G$ is a graph $H$ with vertex set $V(H) = \bigcup_{v \in V(G)} X_{v}$, where $X_{v} = \{(v, 1), (v, 2), \dots, (v, s)\}$; the edge set $\mathscr{M} = \bigcup_{uv \in E(G)}\mathscr{M}_{uv}$, where $\mathscr{M}_{uv}$ is a matching between $X_{u}$ and $X_{v}$. A vertex set $R \subseteq V(H)$ is a \textbf{transversal} of $H$ if $|R \cap X_{v}| = 1$ for each $v \in V(G)$. A transversal $R$ is a \textbf{strictly $f$-degenerate transversal} if $H[R]$ is strictly $f$-degenerate. The main result of this paper is a degree type result, which generalizes Brooks' theorem, Gallai's theorem, degree-choosable result, signed degree-colorable result, and DP-degree-colorable result. We also give some structural results on critical graphs with respect to strictly $f$-degenerate transversal. Using these results, we can uniformly prove many new and known results. In the final section, we pose some open problems. 
\end{abstract}

{\bf Keywords}: Strictly f-degenerate; List vertex arboricity; DP-coloring; Signed coloring; List-coloring

{\bf MSC2020}: 05C15

\section{Introduction}\label{sec1}
All graphs considered in this paper are simple, finite and undirected. Let $\mathbb{Z}^{*}$ stand for the set of nonnegative integers. Let $s$ be a positive integer and let $[s]$ stand for the set $\{1, 2, \dots, s\}$. 

For a graph $G$, a \textbf{list-assignment} $L$ assigns to each vertex $v$ a set $L(v)$ of colors for $v$. An \textbf{$L$-coloring} of $G$ is a proper coloring $\phi$ of $G$ such that $\phi(v) \in L(v)$ for all $v \in V(G)$. A \textbf{list $k$-assignment} $L$ is a list-assignment such that $|L(v)| \geq k$ for all $v \in V(G)$. A graph $G$ is \textbf{$k$-choosable} or \textbf{list $k$-colorable} if it has an $L$-coloring for any list $k$-assignment $L$. The \textbf{list chromatic number} or \textbf{choice number} $\chi_{\ell}(G)$ is the least integer $k$ such that $G$ is $k$-choosable. 

A graph is \textbf{$k$-degenerate} if each nonempty subgraph has a vertex of degree at most $k$ in this subgraph. The \textbf{degeneracy} of $G$ is the least integer $k$ such that it is $k$-degenerate. A graph is \textbf{strictly $k$-degenerate} if each nonempty subgraph has a vertex of degree less than $k$ in this subgraph. A graph is strictly $1$-degenerate if and only if it is edgeless, and a graph is strictly $2$-degenerate if and only if it is a forest. Note that a graph is strictly $k$-degenerate if and only if it is $(k-1)$-degenerate. The \textbf{coloring number} of $G$ is the least integer $k$ such that $G$ is strictly $k$-degenerate. Note that the coloring number equals the degeneracy plus one. Borodin, Kostochka and Toft \cite{MR1743629} introduced a more flexible concept, strictly $f$-degenerate, to unify some problems. Let $f$ be a function from $V(G)$ to $\mathbb{Z}^{*}$. A graph $G$ is \textbf{strictly $f$-degenerate} if each nonempty subgraph $\Gamma$ of $G$ has a vertex $v$ such that $\deg_{\Gamma}(v) < f(v)$. In other words, the vertices of $G$ can be ordered as $v_{1}, v_{2}, \dots, v_{n}$ such that each vertex $v_{i}$ has less than $f(v_{i})$ neighbors with larger subscripts. This order is called an \textbf{$f$-removing order}. 

The \textbf{vertex arboricity} of a graph $G$ is the minimum number of subsets (color classes) in which $V(G)$ can be partitioned so that each subset induces a forest. The \textbf{linear vertex arboricity} of a graph $G$ is the minimum number of subsets (color classes) in which $V(G)$ can be partitioned so that each subset induces a forest with maximum degree at most two. An \textbf{$L$-forested-coloring} of $G$ for a list assignment $L$ is a coloring $\phi$ (not necessarily proper) such that $\phi(v) \in L(v)$ for each $v \in V(G)$ and each color class induces a forest. The \textbf{list vertex arboricity} of a graph $G$ is the least integer $k$ such that $G$ has an $L$-forested-coloring for any list $k$-assignment $L$. The list vertex arboricity of a graph is the list version of vertex arboricity. The \textbf{linear list vertex arboricity} can be similarly defined. 

Let $f_{1}, f_{2}, \dots, f_{s}$ be functions from $V(G)$ to $\mathbb{Z}^{*}$. A partition $(V_{1}, V_{2}, \dots, V_{s})$ of $V(G)$ is an \textbf{$(f_{1}, f_{2}, \dots, f_{s})$-partition} if $G[V_{i}]$ is strictly $f_{i}$-degenerate for each $i \in [s]$. A graph $G$ is \textbf{$(f_{1}, f_{2}, \dots, f_{s})$-partitionable} if $V(G)$ has a $(f_{1}, f_{2}, \dots, f_{s})$-partition. Note that if $f_{i}(v) = 0$ then $v$ cannot be chosen in $V_{i}$ since it never has a negative degree. Partitioning $V(G)$ into induced subgraphs of variable degeneracy in which $f_{i}(v) \in \{0, 1\}$ for all $v$ and $i$, corresponds to list coloring of $G$ with the list $L(v) = \{\,i \mid f_{i}(v) = 1\,\}$. Then the $(f_{1}, f_{2}, \dots, f_{s})$-partitionability is a generalization of choosability. Similarly, partitioning $V(G)$ into induced subgraphs of variable degeneracy in which $f_{i}(v) \in \{0, 2\}$ for all $v$ and $i$, corresponds to $L$-forested-coloring of $G$ with the list $L(v) = \{\,i \mid f_{i}(v) = 2\,\}$. Then the $(f_{1}, f_{2}, \dots, f_{s})$-partition is a generalization of $L$-forested-coloring. 

A \textbf{signed graph} is a graph in which each edge is labeled with a sign $\sigma$ that is either $+1$ or $-1$. Let $\mathcal{C}_{k} \coloneqq \{\pm 1, \pm 2, \dots, \pm s\}$ if $k = 2s$, and $\mathcal{C}_{k} \coloneqq \{0, \pm 1, \pm 2, \dots, \pm s\}$ if $k = 2s+1$. A \textbf{signed $k$-coloring} of a signed graph $(G, \sigma)$ is a mapping $\phi$ from $V(G)$ to $\mathcal{C}_{k}$ such that $\phi(u) \neq \sigma(uv) \times \phi(v)$ for each edge $uv$ in $G$. The \textbf{signed chromatic number $\chi^{\pm}(G)$} of a graph $G$ is the least integer $k$ such that for any signed graph $(G, \sigma)$, it has a signed $k$-coloring. The concept of signed graphs is due to Harary \cite{MR0067468}, but the current concept of signed chromatic number is due to M\'{a}\v{c}ajov\'{a}, Raspaud and \v{S}koviera \cite{MR3484719}. For more results on signed coloring, we refer the reader to \cite{MR3612419, MR3425977, MR3477382}. 

A \textbf{cover} of a graph $G$ is a graph $H$ with vertex set $V(H) = \bigcup_{v \in V(G)} X_{v}$, where $X_{v} = \{(v, 1), (v, 2), \dots, (v, s)\}$, and edge set $\mathscr{M} = \bigcup_{uv \in E(G)}\mathscr{M}_{uv}$, where $\mathscr{M}_{uv}$ is a matching between $X_{u}$ and $X_{v}$. Note that $X_{v}$ is an independent set in $H$ and $\mathscr{M}_{uv}$ may be an empty set. For convenience, this definition is slightly different from Bernshteyn and Kostochka's, but consistent with Schweser's \cite{MR4020554}. A vertex subset $R \subseteq V(H)$ is a \textbf{transversal} of $H$ if $|R \cap X_{v}| = 1$ for each $v \in V(G)$. 

Let $H$ be a cover of $G$ and $f$ be a function from $V(H)$ to $\mathbb{Z}^{*}$, we call the pair $(H, f)$ a \textbf{valued cover} of $G$. We say that $f$ is \textbf{semi-constant} if it is a constant on each component of $H$. Let $S$ be a subset of $V(G)$, we use $H_{S}$ to denote the induced subgraph $H[\bigcup_{v \in S} X_{v}]$. A transversal $R$ is a \textbf{strictly $f$-degenerate transversal} if $H[R]$ is strictly $f$-degenerate. We say that the vertex $v$ in $G$ \textbf{is colored with $p$} if $(v, p)$ is chosen in a strictly $f$-degenerate transversal of $H$. 

Let $H$ be a cover of $G$ and $f$ be a function from $V(H)$ to $\{0, 1\}$. An \textbf{independent transversal}, or \textbf{DP-coloring}, of $H$ is a strictly $f$-degenerate transversal of $H$. Note that a DP-coloring is a special independent set in $H$. DP-coloring, also known as \textbf{correspondence coloring}, was introduced by Dvo\v{r}\'{a}k and Postle \cite{MR3758240} when they used it to solve a longstanding conjecture by Borodin \cite{MR3004485}. More results and discussions on DP-coloring, see \cite{MR3679840, MR3518419, MR3889157, MR4043754, MR3957361, MR3948125}. 

The \textbf{DP-chromatic number $\chi_{\mathrm{DP}}(G)$} of $G$ is the least integer $k$ such that $(H, f)$ has a DP-coloring whenever $H$ is a cover of $G$ and $f$ is a function from $V(H)$ to $\{0, 1\}$ with $f(v, 1) + f(v, 2) + \dots + f(v, s) \geq k$ for each $v \in V(G)$. A graph $G$ is \textbf{DP-$k$-colorable} if its DP-chromatic number is at most $k$. 

The \textbf{Cartesian product} $G \Box H$ of graphs $G$ and $H$ is the simple graph with vertex set $V(G) \times V(H)$, in which two vertices $(u, u')$ and $(v, v')$ are adjacent if and only if either $u = v$ and $u'v' \in E(H)$, or $u' = v'$ and $uv \in E(G)$.

\subsection*{$\bullet$ A generalization of DP-coloring, signed coloring and list coloring}
By the definitions, DP-coloring is a special case of strictly $f$-degenerate transversal of $H$, which implies that strictly $f$-degenerate transversal is a generalization of DP-coloring. Dvo\v{r}\'{a}k and Postle \cite{MR3758240} have pointed out that DP-coloring is a generalization of list coloring; Kim and Ozeki \cite{MR3948125} showed that DP-coloring is also a generalization of signed coloring. Hence, strictly $f$-degenerate transversal is a generalization of DP-coloring, signed coloring and list coloring.

\subsection*{$\bullet$ A generalization of $(f_{1}, f_{2}, \dots, f_{s})$-partition and $L$-forested-coloring}
Next, we show that strictly $f$-degenerate transversal is a generalization of $(f_{1}, f_{2}, \dots, f_{s})$-partition. An \textbf{ID-cover} of a graph $G$ is a cover such that $(u, p)$ and $(v, q)$ are adjacent in $H$ if and only if $uv \in E(G)$ and $p = q$. In other words, an ID-cover of a graph $G$ is a Cartesian product of $G$ and an independent $s$-set. Note that ID-cover of $G$ is a special cover of $G$. We give a relation between the existence of a strictly $f$-degenerate transversal in an ID-cover $H$ and the $(f_{1}, f_{2}, \dots, f_{s})$-partitionability of $G$. 

\begin{proposition}
Let $(f_{1}, f_{2}, \dots, f_{s})$ be a sequence of nonnegative integer valued functions on $V(G)$, and $(H, f)$ be a valued ID-cover by $f(v, 1) = f_{1}(v)$, $f(v, 2) = f_{2}(v)$, $\dots$, $f(v, s) = f_{s}(v)$ for each $v \in V(G)$. Then $H$ has a strictly $f$-degenerate transversal if and only if $G$ is $(f_{1}, f_{2}, \dots, f_{s})$-partitionable.  
\end{proposition}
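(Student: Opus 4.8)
The plan is to establish the two directions of the biconditional by exhibiting an explicit correspondence between transversals of the ID-cover $H$ and partitions of $V(G)$ into $\kappa$ parts, and then checking that the strict-$f$-degeneracy of $H[R]$ translates exactly into the condition that each part $V_{i}$ induces a strictly $f_{i}$-degenerate subgraph of $G$.

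First I would set up the bijection. Any transversal $R$ of $H$ picks, for each $v \in V(G)$, exactly one vertex $(v, p_{v})$ of $X_{v}$; I associate to $R$ the ordered partition $(V_{1}, V_{2}, \dots, V_{\kappa})$ of $V(G)$ defined by $V_{i} = \{\, v \in V(G) \mid (v, i) \in R \,\}$, i.e. $v \in V_{i}$ iff $v$ is colored with $i$. Conversely, any ordered partition of $V(G)$ determines a transversal in the obvious way. The crucial observation, and the heart of the argument, is that because $H$ is an \emph{ID}-cover, $(u, p)$ and $(v, q)$ are adjacent in $H$ if and only if $uv \in E(G)$ and $p = q$; consequently, for the vertices actually chosen by $R$, we have $(u, i)(v, i) \in \mathscr{M}$ if and only if $uv \in E(G)$. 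This says precisely that the map $v \mapsto (v, i)$ is a graph isomorphism from $G[V_{i}]$ onto $H[R \cap (\bigcup_{v \in V_{i}} X_{v})] = H[R] \cap H_{V_{i}}$ for each $i \in [\kappa]$, and that $H[R]$ is the disjoint union of these $\kappa$ induced pieces (no edges run between $R \cap X$-parts with different color indices). Also note $f$ restricted to the copy of $V_{i}$ equals $f_{i}$ by the definition of the valued ID-cover.

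Next I would argue the equivalence of the degeneracy conditions. Since $H[R]$ decomposes as the vertex-disjoint union of subgraphs isomorphic (with $f$ matching $f_{i}$) to $G[V_{1}], G[V_{2}], \dots, G[V_{\kappa}]$, and strict $f$-degeneracy is a hereditary, componentwise property — a graph is strictly $f$-degenerate iff each of its components is, and it suffices to check that every nonempty subgraph has a vertex $v$ with degree less than $f(v)$ — we get that $H[R]$ is strictly $f$-degenerate if and only if each $G[V_{i}]$ is strictly $f_{i}$-degenerate. Unwinding: if $H$ has a strictly $f$-degenerate transversal $R$, the induced partition is an $(f_{1}, \dots, f_{\kappa})$-partition, so $G$ is $(f_{1}, \dots, f_{\kappa})$-partitionable; conversely, given an $(f_{1}, \dots, f_{\kappa})$-partition $(V_{1}, \dots, V_{\kappa})$, the associated transversal $R$ satisfies that $H[R]$ is strictly $f$-degenerate. (One minor point worth a sentence: if $f_{i}(v) = 0$ then $v$ cannot lie in $V_{i}$, since an isolated vertex of a subgraph needs positive $f$-value; on the cover side this is automatic because $(v, i)$ with $f(v, i) = 0$ can never sit in a strictly $f$-degenerate induced subgraph — the two sides remain in lockstep.)

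The main obstacle is essentially bookkeeping rather than any deep difficulty: one must be careful that the isomorphism $G[V_{i}] \cong H[R] \cap H_{V_{i}}$ genuinely uses the ID-cover hypothesis ($p = q$ forced) and would \emph{fail} for a general cover, and one must verify that $H[R]$ has no edges between the different color strata so that it really is the disjoint union claimed — but this too is immediate from $p = q$. So I expect the proof to be short; the only care needed is in stating the isomorphism precisely and invoking that strict $f$-degeneracy is preserved under isomorphism and is a componentwise property.
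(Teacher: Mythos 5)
Your proposal is correct and follows essentially the same route as the paper: both set up the correspondence $V_{i} = \{v \mid (v,i) \in R\}$, use the ID-cover structure to see that $H[R]$ is the vertex-disjoint union of copies of the $G[V_{i}]$ with $f$ restricting to $f_{i}$, and conclude via the componentwise nature of strict $f$-degeneracy. The only difference is presentational — you make the isomorphism and the disjoint-union decomposition slightly more explicit than the paper does.
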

\begin{proof}
($\Longrightarrow$) Suppose that $R$ is a strictly $f$-degenerate transversal of $H$. For each $i \in [s]$, let $R_{i}$ be the set of vertices in $R$ such that they are colored with $i$, and let $V_{i} = \{v \mid (v, i) \in R\}$. Note that $(R_{1}, R_{2}, \dots, R_{s})$ is a partition of $R$ and $(V_{1}, V_{2}, \dots, V_{s})$ is a partition of $V(G)$. By the definition of strictly $f$-degenerate transversal, each induced subgraph $H[R_{i}]$ is strictly $f$-degenerate. On the other hand, $G[V_{i}] = H[R_{i}]$ is strictly $f_{i}$-degenerate for each $i \in [s]$. Hence, $G$ is $(f_{1}, f_{2}, \dots, f_{s})$-partitionable. 

($\Longleftarrow$) Suppose that $(V_{1}, V_{2}, \dots, V_{s})$ is a partition of $V(G)$ such that $G[V_{i}]$ is strictly $f_{i}$-degenerate for each $i \in [s]$. Let $R_{i} \coloneqq \{(v, i) \mid v \in V_{i}\}$ for each $i \in [s]$. Note that $R_{1} \cup R_{2} \cup \dots \cup R_{s}$ is a transversal of $H$ and $H[R_{i}] = G[V_{i}]$ for each $i \in [s]$. Note that for $i \neq j$, $H[R_{i} \cup R_{j}] = H[R_{i}] \cup H[R_{j}]$; \ie there is no edge between distinct subsets $R_{i}$ and $R_{j}$, thus $R_{1} \cup R_{2} \cup \dots \cup R_{s}$ is a strictly $f$-degenerate transversal. 
\end{proof}
Hence, strictly $f$-degenerate transversal is a generalization of $(f_{1}, f_{2}, \dots, f_{s})$-partition, and hence a generalization of $L$-forested-coloring. 

By the above arguments, we have the following result. 

\begin{proposition}
Let $G$ be a graph and $H$ be a cover of $G$. If $H$ has a strictly $f$-degenerate transversal whenever $f$ is a function from $V(H)$ to $\{0, 1, 2\}$, and $f(v, 1) + f(v, 2) + \dots + f(v, s) \geq k$ for each vertex $v \in V(G)$, then $G$ is DP-$k$-colorable, and the list vertex arboricity of $G$ is at most $\lceil\frac{k}{2}\rceil$. 
\end{proposition}

In \autoref{sec:DTR}, a degree type result is given, which generalizes Brooks' theorem, Gallai's theorem, degree-choosable, signed degree-colorable, and DP-degree-colorable results. In \autoref{sec:SELF-STRENGTHENING}, we show that the degree type result is self-strengthening. In \autoref{sec:MINIMAL}, some structural results are presented for a minimal counterexample for the existence of strictly $f$-degenerate transversal. In \autoref{sec:applications}, many applications of the results in \autoref{sec:DTR}--\ref{sec:MINIMAL} are presented. In the final section, we pose and discuss some open problems. 
\section{Degree type result}\label{sec:DTR}
The following result was obtained by Borodin \cite{MR0498204} and, independently, by Bollob\'{a}s and Manvel \cite{MR541961}. 
\begin{theorem}\label{BBM}
Let $G$ be a connected graph with maximum degree $\Delta(G) = \Delta \geq 3$ and not the complete graph $K_{\Delta + 1}$, and let $t_{1}, t_{2}, \dots, t_{s}$ be positive integers. If $s \geq 2$ and $t_{1} + t_{2} + \dots + t_{s} \geq \Delta$, then $G$ is $(t_{1}, t_{2}, \dots, t_{s})$-partitionable. 
\end{theorem}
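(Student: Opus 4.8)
The plan is to prove the statement by induction on the order of $G$, following the classical strategy for Brooks-type partition theorems. The natural setup is to consider a vertex-partition greedily: order the vertices $v_1, v_2, \dots, v_n$ and assign each $v_i$ to a part $V_j$ so that $v_i$ has fewer than $t_j$ already-placed neighbors in $V_j$; if such an order and assignment exist, then reading the vertices of each $V_j$ in reverse gives an $f_j$-removing order, so $G[V_j]$ is strictly $t_j$-degenerate (constant function $t_j$). The counting heart is that a vertex $v$ of degree $d(v) \le \Delta \le t_1 + \dots + t_\kappa$ can always be placed somewhere, \emph{unless} it is already blocked in every part, which forces $d(v) = \Delta$ and a very rigid local structure. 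So the real work is ruling out that rigid structure globally, exactly as in Brooks' theorem.

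The key steps, in order, would be: (1) Reduce to the case that $G$ is $2$-connected and not a complete graph or an odd cycle — for $\Delta \ge 3$ the odd-cycle case is vacuous, and cut-vertices are handled by splitting $G$ into blocks, partitioning each block by induction, and merging the partitions consistently at the cut-vertex (here we use $\kappa \ge 2$ to have room to realign colors). (2) Since $G$ is $2$-connected, not complete, and $\Delta \ge 3$, invoke the standard Brooks-theorem lemma: there exist vertices $v_1, v_2$ at distance $2$ with a common neighbor $v_n$ such that $G - \{v_1, v_2\}$ is connected; order the remaining vertices $v_3, \dots, v_n$ so that each $v_i$ ($3 \le i \le n-1$) has a neighbor among $v_{i+1}, \dots, v_n$, with $v_n$ last. (3) Build the partition greedily along $v_1, v_2, v_3, \dots, v_n$: put $v_1$ and $v_2$ both into $V_1$ (legal since they are nonadjacent, so each has $0 < t_1$ neighbors there so far); then each $v_i$ with $3 \le i \le n-1$ has at most $\Delta - 1$ already-placed neighbors, and since $t_1 + \dots + t_\kappa \ge \Delta$ it can be placed in some $V_j$ without reaching its threshold $t_j$; finally $v_n$ has two neighbors $v_1, v_2$ sitting in the same part $V_1$, so it sees at most $\Delta - 1$ "effective" obstructions across the parts and can likewise be legally placed.

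The main obstacle — and the step deserving the most care — is step (3)'s final vertex argument together with the edge cases in step (1). One must check that when $v_i$ is placed, the count of prior neighbors in each part really does stay strictly below $t_j$ for the part chosen; the clean way is the standard double-counting: $v_i$ has at most $\deg(v_i) - 1 \le \Delta - 1$ earlier neighbors for $i \le n-1$, and if it were blocked in all $\kappa$ parts we would need $\sum_j t_j \le \Delta - 1$, contradicting the hypothesis $\sum_j t_j \ge \Delta$. For $v_n$, its $\deg(v_n) \le \Delta$ neighbors include $v_1$ and $v_2$ in a common part, so the "effective" obstruction count is again at most $\Delta - 1$, and the same inequality applies. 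The block-merging in step (1) requires only that $\kappa \ge 2$ so that a color permutation can align the partitions of two blocks at their shared cut-vertex; the degenerate possibilities (a block that is $K_2$, or $K_{\Delta+1}$, or an odd cycle with $\Delta = 2$, which cannot occur here) must be dispatched separately, and that bookkeeping, while routine, is where a careless proof would slip.
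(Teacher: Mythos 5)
Your overall greedy framework (assign each vertex to a part where it has fewer than $t_j$ already-placed neighbours, then read each part backwards to obtain a removing order) is sound, but the two steps you dismiss as routine are exactly where the argument breaks once some $t_j\ge 2$, and neither survives as written. First, the endgame of step (3): placing two nonadjacent neighbours $v_1,v_2$ of $v_n$ into the same part $V_1$ only produces a saving when $t_1=1$, because a part $V_j$ blocks $v_n$ as soon as it contains $t_j$ earlier neighbours. If, say, $\kappa=2$ and $t_1=t_2=2$ with $\Delta=4$ (partition into two forests), then $v_1,v_2\in V_1$ supply precisely the $t_1=2$ neighbours needed to block $V_1$, nothing is wasted, and $v_n$ may finish with two earlier neighbours in each part, i.e.\ blocked everywhere; your count of ``at most $\Delta-1$ effective obstructions'' is the Brooks count and does not transfer. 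Second, the cut-vertex reduction of step (1): you cannot ``realign colours'' at a cut vertex $w$, since the parts carry different thresholds $t_{j_1}\ne t_{j_2}$; and even when $w$ lands in the same part $j$ on both sides, the union of two strictly $t_j$-degenerate graphs glued at a single vertex need not be strictly $t_j$-degenerate once $t_j\ge 3$ (one can construct a $2$-degenerate graph in which $w$ is the unique vertex of degree at most $2$; gluing two copies at $w$ yields minimum degree $3$). Gluing is only automatic for $t_j\in\{1,2\}$.

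These are not bookkeeping issues; they are the reason this theorem is harder than Brooks' theorem and why Borodin, Bollob\'as--Manvel and Borodin--Kostochka--Toft argue differently. The standard repair at a cut vertex is to partition one side (or $G-w$) first and then split the \emph{budget} at $w$ between the two sides according to how many neighbours of $w$ were actually consumed in each part; this forces one to prove the statement for variable, non-constant functions $f_j$ (the self-strengthened form), which is exactly what \autoref{2-con} does in the cover setting of \autoref{MR}, the theorem from which the paper derives the present statement. For the $2$-connected case the paper does not use the ``two nonadjacent neighbours of $v_n$'' device at all: it takes a minimum-degree vertex $w$, distinguishes whether $G-w$ is $2$-connected, and uses \autoref{COM} to force $G$ to be complete or a cycle before analysing whether the function is constant. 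To complete your proof you would need either to strengthen the induction hypothesis to variable functions and replace the endgame, or to restrict to the case $\min_j t_j=1$, where your savings argument does work.
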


Brooks' theorem follows from \autoref{BBM} by defining $t_{1} = t_{2} = \dots = t_{s} = 1$, and an upper bound on vertex arboricity follows from \autoref{BBM} by defining $t_{1} = t_{2} = \dots = t_{s} = 2$. An analogue to Brook's theorem in terms of list coloring was obtained by Vizing \cite{MR0498216} and, independently, by Erd\H{o}s, Rubin and Taylor \cite{MR593902}. 
\begin{theorem}\label{VERT}
Let $G$ be a connected graph with maximum degree $\Delta(G) = \Delta \geq 3$ and not the complete graph $K_{\Delta + 1}$. If $L$ is a list $\Delta$-assignment, then $G$ is $L$-colorable. 
\end{theorem}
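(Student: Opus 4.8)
The plan is to follow the classical route of Erd\H{o}s, Rubin and Taylor (and Vizing), splitting on whether $G$ is $\Delta$-regular. Suppose first that $G$ is \emph{not} $\Delta$-regular. I claim $G$ is strictly $\Delta$-degenerate. Let $\Gamma$ be a nonempty subgraph; if every vertex of $\Gamma$ had degree $\Delta$ in $\Gamma$, then (as $\Delta(G)=\Delta$) each component of $\Gamma$ would be $\Delta$-regular and, being a subgraph of the connected graph $G$ with no vertex of degree exceeding $\Delta$, would have to equal $G$ — contradicting non-regularity. So $\Gamma$ has a vertex of degree less than $\Delta$, i.e. $G$ is strictly $\Delta$-degenerate. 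Fixing a $\Delta$-removing order $v_{1},\dots,v_{n}$ and colouring greedily from $v_{n}$ down to $v_{1}$, each $v_{i}$ has fewer than $\Delta\le|L(v_{i})|$ already-coloured neighbours, so an $L$-colouring exists.

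Now suppose $G$ is $\Delta$-regular, so $|L(v)|\ge\Delta=\deg_{G}(v)$ for every $v$, and it suffices to prove the degree-choosability statement that such a $G$ (connected, $\Delta\ge3$, $G\ne K_{\Delta+1}$) is $L$-colourable. I would first note that $G$ is not a Gallai tree (a connected graph every block of which is a complete graph or an odd cycle): otherwise a leaf block $B$ with cut vertex $c$ would satisfy $\deg_{B}(v)=\Delta$ for all $v\in V(B)\setminus\{c\}$, which for $\Delta\ge3$ excludes $B$ being an odd cycle and forces $B=K_{\Delta+1}$, whence $\deg_{B}(c)=\Delta=\deg_{G}(c)$ and $c$ is not a cut vertex — a contradiction; and a single-block $G$ would have to be $K_{\Delta+1}$ (excluded) or an odd cycle (excluded since $\Delta\ge3$). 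One then invokes, or reproves, the Erd\H{o}s--Rubin--Taylor theorem: every connected graph that is not a Gallai tree is degree-choosable.

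To sketch that last step, a routine induction over the block-decomposition reduces matters to a $2$-connected graph $B$ which is neither complete nor an odd cycle (the pieces hanging off a cut vertex are $L$-coloured and merged in the usual way, since the cut vertex has strictly fewer neighbours on each side than its list size). For such a $B$ one uses a connectivity lemma of Brooks: there exist $a,b,z$ with $za,zb\in E(B)$, $ab\notin E(B)$, and $B-\{a,b\}$ connected. Rooting a spanning tree of $B-\{a,b\}$ at $z$ and listing its vertices by decreasing depth yields an order $v_{3},\dots,v_{n}=z$ in which each $v_{i}$ with $i<n$ has a later neighbour; colour $a$ and $b$ first — with a common colour if $L(a)\cap L(b)\ne\emptyset$ — then colour $v_{3},\dots,v_{n-1}$ greedily (each sees fewer than $\Delta$ colours), and finally $z$, which sees at most $\Delta-1$ colours because $a,b$ share a colour. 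I expect the main obstacle to be exactly this connectivity lemma when $B$ is $2$-connected but not $3$-connected, where $a,b,z$ must be located relative to a suitable $2$-cut; a secondary point is the subcase $L(a)\cap L(b)=\emptyset$, handled by instead choosing the colours of $a$ and $b$ so that at least one lies outside $L(z)$ (possible unless $|L(z)|\ge2\Delta$, in which case $z$ has ample room regardless).
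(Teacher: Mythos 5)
The paper offers no proof of this statement: it is quoted as classical background (Vizing; Erd\H{o}s--Rubin--Taylor), and within the paper's framework it is subsumed by \autoref{MR} applied to the ID-cover with $f(v,i)=1$ exactly when $i\in L(v)$ (constructibility then forces, via \autoref{FC}, that $G$ is $\Delta$-regular and built from complete and cycle blocks, hence equals $K_{\Delta+1}$). Your route is the classical direct one, and its first two stages are complete and correct: the non-regular case via strict $\Delta$-degeneracy and greedy colouring, and the observation that a connected $\Delta$-regular $G\neq K_{\Delta+1}$ with $\Delta\ge 3$ is not a Gallai tree.

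The gap is in the final stage, which is where all the content of the Erd\H{o}s--Rubin--Taylor degree-choosability theorem lives. Beyond the connectivity lemma you already flag, your handling of the subcase $L(a)\cap L(b)=\emptyset$ does not close once the block reduction has been performed. After colouring $G-B$ and shrinking lists, the hypothesis on the block $B$ is only $|L'(v)|\ge\deg_B(v)$; if $L'(a)\cup L'(b)\subseteq L'(z)$ you get $|L'(z)|\ge |L'(a)|+|L'(b)|\ge\deg_B(a)+\deg_B(b)$, and this need not exceed $\deg_B(z)$ (take $a,b$ of degree $2$ in $B$ and $z$ of large degree), so $z$ does not automatically have ``ample room.'' Your bound $|L(z)|\ge 2\Delta$ is valid only when $B=G$, i.e.\ when $G$ is $2$-connected and $\Delta$-regular; in that case your argument does go through. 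The standard proofs dispose of the disjoint-list case by a genuinely different mechanism (reducing to tight lists and locating a theta-subgraph or an even cycle with a path, or running Borodin's induction on $G-v$ for a non-cut vertex $v$), so this subcase needs another idea rather than a tweak of the greedy order. If you are content to cite degree-choosability of non-Gallai-trees as known, your proof is complete; as a self-contained argument it is not.
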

Borodin, Kostochka and Toft \cite{MR1743629} gave a common generalization of \autoref{BBM} and \autoref{VERT} by replacing the constants with functions and forbidding some obstacles.  

\begin{theorem}\label{THM4}
Let $G$ be a connected graph with maximum degree $\Delta(G) = \Delta \geq 3$ and not the complete graph $K_{\Delta + 1}$. Let $f_{1}, f_{2}, \dots, f_{s}$ be functions from $V(G)$ to $\mathbb{Z}^{*}$. If $f_{1}(v) + f_{2}(v) + \dots + f_{s}(v) \geq \Delta$ for each $v \in V(G)$, and $G$ does not contain a monoblock, then $G$ is $(f_{1}, f_{2}, \dots, f_{s})$-partitionable. 
\end{theorem}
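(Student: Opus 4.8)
The plan is to argue by minimal counterexample, exactly in the spirit of the classical proofs of Brooks' theorem and of Borodin–Kostochka–Toft. So suppose $G$ is a connected graph with $\Delta(G) = \Delta \geq 3$, $G \neq K_{\Delta+1}$, $G$ contains no monoblock, and $f_{1}, \dots, f_{\kappa}$ are functions $V(G) \to \mathbb{Z}^{*}$ with $\sum_{i} f_{i}(v) \geq \Delta$ for every $v$, yet $G$ admits no $(f_{1}, \dots, f_{\kappa})$-partition. Choose such a counterexample with $|V(G)|$ minimum (and, if needed, with $\sum_{v}\sum_{i} f_{i}(v)$ minimum among those). The first step is the usual reduction: we may assume $\sum_{i} f_{i}(v) = \Delta$ for all $v$ (decreasing any surplus keeps the hypotheses and can only make the problem harder), each $f_{i}(v) \leq \deg_{G}(v)$ (a vertex with $f_{i}(v) > \deg_{G}(v)$ can always be placed last in $V_{i}$, so delete it, partition the rest by minimality, and reinsert), and $G$ is $2$-connected — if $G$ has a cut vertex $x$, split $G$ into blocks through $x$, handle each by minimality after checking that the smaller pieces still meet the hypotheses (here one uses that $\Delta$ of a block is $\leq \Delta$, and that a block attaining $\Delta$ and equal to $K_{\Delta+1}$ or a monoblock is excluded — this is where the monoblock obstruction is designed to be inherited), and then glue the partial partitions at $x$ by permuting the index sets so the colors agree.

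The heart of the argument is then a discharging/structure dichotomy on the $2$-connected reduced graph $G$. The key observation — the analogue of the "greedy with a good ordering" idea — is: if we can find an ordering $v_{1}, \dots, v_{n}$ of $V(G)$ and an assignment of each $v_{j}$ to some class $V_{i}$ such that each $v_{j}$ has, among its already-placed neighbors in its own class, fewer than $f_{i}(v_{j})$ of them, we are done; and because $\sum_{i} f_{i}(v) = \deg_{G}(v)$ for the reduced graph, a vertex of degree $\deg_{G}(v)$ with $\deg_{G}(v)$ earlier neighbors can still be "balanced" across the $\kappa$ classes unless all its earlier neighbors conspire against every class. Concretely, I would take a spanning tree rooted so that the last vertex $v_{n}$ has a neighbor among $v_{1}, \dots, v_{n-1}$ that allows slack; process $v_{n}, v_{n-1}, \dots$; whenever we reach a vertex all of whose processed neighbors fill up every class, we derive that locally $G$ looks like $K_{\Delta+1}$ or a "brick"/monoblock-type configuration. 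Using $2$-connectedness and $G \neq K_{\Delta+1}$, we can always choose the root/order to avoid the $K_{\Delta+1}$ obstruction (this is the classical Brooks case analysis: either $G$ has a vertex of degree $< \Delta$, or $G$ is $\Delta$-regular $2$-connected and non-complete, in which case one finds two non-adjacent vertices at distance $2$ from a common neighbor and orders them first), and the remaining forced local structures are precisely what is packaged into the definition of a monoblock, so the no-monoblock hypothesis kills them.

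The step I expect to be the main obstacle is the precise handling of the $\Delta$-regular case and the identification of exactly which rigid local configurations force a failure — i.e., verifying that the only obstructions are $K_{\Delta+1}$ and monoblocks, and that the monoblock condition is correctly inherited under the vertex-deletion and block-decomposition reductions. In Brooks' theorem this is a short case check because each class has capacity $1$; here, with variable capacities $f_{i}(v)$ that may differ across vertices, the "counting" that shows a forced configuration must be essentially complete (so that no reshuffling of which vertex goes to which class helps) is more delicate, and one has to argue that the slack created by any vertex with $f_{i}(v) < \deg_{G}(v)$ for some $i$, or by any pair of non-adjacent vertices in a common neighborhood, propagates through the whole block. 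I would follow Borodin–Kostochka–Toft's treatment closely here, since \autoref{THM4} is quoted from \cite{MR1743629} and its proof technique (blocks, low vertices, and the monoblock exclusion) is exactly the template to reproduce.
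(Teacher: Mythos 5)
The paper does not actually prove \autoref{THM4}: it is quoted from Borodin--Kostochka--Toft, and within this paper it is subsumed by the main result \autoref{MR} (applied to an ID-cover, via the proposition relating strictly $f$-degenerate transversals of ID-covers to $(f_{1},\dots,f_{\kappa})$-partitions). Your outline correctly identifies the classical strategy, but as written it has a genuine gap, and you locate it yourself: the entire $\Delta$-regular $2$-connected case --- i.e.\ the verification that the only obstructions are $K_{\Delta+1}$ and monoblocks --- is deferred rather than carried out, and that case \emph{is} the theorem. Everything before it is routine reduction; a proof that stops at ``the remaining forced local structures are precisely what is packaged into the definition of a monoblock'' has not proved anything.

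There is also a structural problem with the induction as you set it up. You induct on \autoref{THM4} itself, whose hypotheses ($\Delta(G)=\Delta\ge 3$, $G\ne K_{\Delta+1}$, no monoblock) are not inherited by the pieces produced in your reductions: a block of $G$ at a cut vertex, or a component of $G-v$, may have maximum degree $<3$, may be a complete graph or a cycle on its own terms, and need not satisfy the $\Delta$-condition in any useful way relative to its own degrees. This is exactly why both \cite{MR1743629} and this paper first prove the degree-condition version (\autoref{HARD}, resp.\ \autoref{MR}: $\sum_i f_i(v)\ge\deg_G(v)$ for every $v$, with conclusion ``partitionable iff not constructible''), where the full list of obstructions --- complete blocks with semi-constant $f$, cycles (ladders/M\"obius ladders), and monoblocks --- is closed under the gluing of \autoref{constructible}; \autoref{THM4} is then a corollary. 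Two further steps of yours would need repair in that framework: ``glue the partial partitions at $x$ by permuting the index sets'' is not available, since the classes carry fixed, generally distinct functions $f_i$ (the correct move, as in \autoref{2-con}, is to split $f$ at the cut vertex as $f=f^{(1)}+f^{(2)}$ and use minimality on a non-constructible piece); and ``decreasing any surplus keeps the hypotheses'' requires an argument that the decrease can be chosen so as not to create a monoblock, since monoblock-hood depends on the functions, not just on $G$.
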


Borodin gave a characterization of degree-choosable graphs. 

\begin{theorem}\label{B}
Let $G$ be a connected graph with a list-assignment $L$ such that $|L(v)| \geq \deg(v)$ for each $v \in V(G)$. Then $G$ is not $L$-colorable if and only if $G$ is $R$-constructible. 
\end{theorem}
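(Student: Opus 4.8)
The statement is an equivalence, and I would establish the two directions by separate inductions, the ``if'' direction being routine and the ``only if'' direction carrying the difficulty. \textbf{Sufficiency.} Recall that $R$-constructibility is defined recursively: the base objects are a complete graph whose vertices all carry one common list (forced to have size one below the order) and an odd cycle whose vertices all carry one common $2$-list, and there is a single gluing rule that identifies two already-built pairs at one vertex along two \emph{disjoint} lists whose union is the new list there. I would prove that every such pair is not $L$-colorable by induction along this construction. For $K_n$ with a common $(n-1)$-list an $L$-coloring would $(n-1)$-color $n$ pairwise adjacent vertices, and for an odd cycle with a common $2$-list it would properly $2$-color an odd cycle; both are impossible. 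In the inductive step, any $L$-coloring of a glued graph gives the identification vertex a color lying in exactly one of the two parent lists and therefore restricts to an $L$-coloring of that parent, contradicting the induction hypothesis.

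\textbf{Necessity.} Let $G$ be connected, not $L$-colorable, with $|L(v)| \ge \deg_G(v)$ for all $v$. I would first upgrade the hypothesis to $|L(v)| = \deg_G(v)$ for every $v$: if some $v_0$ had surplus, root a spanning tree at $v_0$, list $V(G)$ by non-increasing distance from $v_0$ (so $v_0$ is last and every other vertex has its tree-parent later), and color greedily; then each vertex other than $v_0$ meets at most $\deg_G(v) - 1$ already-colored neighbors and $v_0$ meets fewer than $|L(v_0)|$, so the coloring succeeds, contradicting non-colorability. Now induct on $|V(G)|$. If $G$ has a cut vertex $u$, write $G = G_1 \cup G_2$ with $V(G_1) \cap V(G_2) = \{u\}$ and both $G_i$ connected, and let $P_i \subseteq L(u)$ be the set of colors at $u$ that extend to an $L$-coloring of $G_i$. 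Non-colorability of $G$ gives $P_1 \cap P_2 = \emptyset$, while the surplus $\deg_{G_{3-i}}(u) \ge 1$ that the list at $u$ has inside $G_i$ (color $G_i - u$ first --- it has strict surplus at each neighbor of $u$ --- and then $u$) forces $|P_i| \ge \deg_{G_{3-i}}(u)$; since $|P_1| + |P_2| \le |L(u)| = \deg_{G_1}(u) + \deg_{G_2}(u)$, all of these are equalities and $P_1, P_2$ \emph{partition} $L(u)$ into blocks of sizes $\deg_{G_2}(u)$ and $\deg_{G_1}(u)$. Give $G_i$ the list $P_{3-i}$ at $u$ (and $L$ elsewhere): this is a strictly smaller pair with equality list sizes, still not colorable --- an $L$-coloring would color $u$ from $P_{3-i}$ and extend into $G_i$, witnessing $P_1 \cap P_2 \ne \emptyset$ --- so by induction both are $R$-constructible, and the disjoint partition of $L(u)$ exhibits $G$ as their gluing, hence $R$-constructible.

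\textbf{The $2$-connected case, and the main obstacle.} It remains to treat a $2$-connected $G$ with $|L(v)| = \deg_G(v)$ that is not $L$-colorable, and to show it must be a complete graph or an odd cycle with all lists equal. If $G = K_n$, an $L$-coloring is exactly a system of distinct representatives of the lists, so by Hall's theorem its non-existence produces a vertex set $S$ with $\bigl|\bigcup_{v \in S} L(v)\bigr| < |S|$; since every list has size $n - 1$ this is only possible when $S = V(G)$ and all the lists coincide. If $G$ is a cycle, a greedy coloring running around $G$, started at a vertex whose list differs from a neighbor's, succeeds unless $G$ is an odd cycle with a common list (an even cycle is $2$-choosable). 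This leaves the case that $G$ is $2$-connected, not complete and not a cycle, which I would rule out by the Erd\H{o}s--Rubin--Taylor structural lemma that every such graph is degree-choosable: one locates inside $G$ a configuration --- an even cycle with a chord, two odd cycles sharing a single vertex, or two disjoint odd cycles joined by a path --- that admits an $L$-coloring with enough freedom that, using $2$-connectivity and $|L(v)| = \deg_G(v)$, the coloring extends greedily to all of $G$, contradicting non-colorability. I expect this structural dichotomy --- finding and correctly coloring the right configuration --- to be the one real obstacle; the list-size reduction and the cut-vertex reduction above are essentially bookkeeping. (The whole theorem is the ``degree version'' of \autoref{THM4}, with ``monoblock'' replaced by ``$R$-constructible''.)
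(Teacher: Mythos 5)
First, a caveat: the paper itself offers no proof of \autoref{B}. It is quoted as Borodin's theorem, and the Remark following \autoref{HARD} explicitly omits even the definition of ``$R$-constructible'', deferring to \cite{MR1743629}; the nearest in-paper argument is the proof of the generalization \autoref{MR}. Measured against that, your sufficiency direction, your surplus reduction and your cut-vertex reduction are correct, complete, and coincide with the paper's \autoref{GE} and \autoref{2-con} (specialized to ID-covers with $f \in \{0,1\}$). Your treatment of the $2$-connected case, however, follows the Erd\H{o}s--Rubin--Taylor route, whereas the paper's proof of \autoref{MR} instead fixes a minimum-degree vertex $w$ and uses \autoref{COM} to force $G = K_{\delta+1}$ or a cycle, avoiding configuration-hunting altogether.

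The gap sits precisely in the step you flag as the main obstacle, and it is not only unproved but misstated. A $2$-connected graph that is neither complete nor a cycle need not contain any of your three configurations: $K_{2,3}$ is bipartite and its only cycles are chordless $4$-cycles, so it has no even cycle with a chord, no odd cycles at all, and hence none of the listed subgraphs, yet it is $2$-connected, non-complete and not a cycle (and is, of course, degree-choosable). The lemma you actually need (Rubin's block lemma from \cite{MR593902}) is that every such graph contains an \emph{induced} even cycle with at most one chord; one then greedily colors $G - V(D)$ in order of decreasing distance to $D$ and finishes on $D$, whose residual lists still have size at least $\deg_{D}(v)$, using that an even cycle with at most one chord is degree-choosable for arbitrary such lists. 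Both the existence of $D$ and that last colorability claim carry essentially all of the content of the theorem, so as written your argument is an outline whose core case is deferred to an incorrectly quoted lemma; everything surrounding that case is sound.
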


Borodin, Kostochka and Toft \cite{MR1743629} further extended \autoref{THM4} and \autoref{B} to the following result with a degree-condition 
\begin{equation*}
\text{$f_{1}(v) + f_{2}(v) + \dots + f_{s}(v) \geq \deg(v)$ for each $v \in V(G)$.}\tag{$\ast$}
\end{equation*}

\begin{theorem}\label{HARD}
Let $G$ be a connected graph and $f_{1}, f_{2}, \dots, f_{s}$ be functions from $V(G)$ to $\mathbb{Z}^{*}$ with degree-condition ($\ast$). Then $G$ is $(f_{1}, f_{2}, \dots, f_{s})$-partitionable if and only if $G$ is not hard-constructible.  
\end{theorem}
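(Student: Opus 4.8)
The plan is to prove the two implications separately; the ``only if'' direction is short and the ``if'' direction carries the weight.

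\textbf{Necessity.} Assume $G$ is hard-constructible, and argue by structural induction on the recursive definition. In the base cases --- the monoblocks, that is, complete graphs and cycles carrying essentially constant $f_{i}$'s tied to the degree condition $\sum_{i} f_{i}(v) = \deg(v)$ --- one checks non-partitionability by a direct count: in any putative partition some class $V_{i}$ must contain too large a share of the clique or cycle, so the last vertex in an $f_{i}$-removing order of $G[V_{i}]$ would retain at least $f_{i}$ neighbours to its right, a contradiction. For the inductive step, each gluing operation that builds a larger hard-constructible configuration from smaller ones (identification at a vertex, with the $f_{i}$-values there distributed among the parts) is shown to preserve non-partitionability: restrict any partition of the whole to a constituent piece, read off the colour used at the gluing vertex, and obtain a partition of that piece that contradicts the inductive hypothesis.

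\textbf{Sufficiency: set-up and reductions.} Suppose the statement fails, and among all counterexamples --- a connected $G$ with $f_{1},\dots,f_{\kappa}$ obeying $(\ast)$ that is neither $(f_{1},\dots,f_{\kappa})$-partitionable nor hard-constructible --- pick one minimising $|V(G)|$ and then $\sum_{v}\sum_{i}f_{i}(v)$. I would first pass to the \emph{tight} case $\sum_{i}f_{i}(v)=\deg(v)$ for all $v$: if the sum exceeds $\deg(v)$, lower a positive $f_{j}(v)$ by one; since strict degeneracy is monotone in the function (if $g\le h$ pointwise, strictly $g$-degenerate implies strictly $h$-degenerate), a partition of the smaller instance already serves $G$, whereas if the smaller instance is hard-constructible one shows that re-inflating the budget at $v$ breaks the obstruction and makes $G$ partitionable --- in either case a contradiction. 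Next I would show $G$ is $2$-connected: at a cut-vertex $v$ write $G=G_{1}\cup G_{2}$ with $G_{1}\cap G_{2}=\{v\}$, split each $f_{i}(v)$ as $f_{i}^{(1)}(v)+f_{i}^{(2)}(v)$ with $\sum_{i}f_{i}^{(\ell)}(v)=\deg_{G_{\ell}}(v)$, so that each $(G_{\ell},f^{(\ell)})$ satisfies $(\ast)$ and is smaller; by minimality each piece is partitionable or hard-constructible, and since strict degeneracy combines additively at a cut-vertex whose $f$-values have been split, if some split makes both pieces partitionable then $G$ is partitionable, while otherwise the gluing rule exhibits $G$ as hard-constructible --- again a contradiction.

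\textbf{Sufficiency: the $2$-connected core, and the main obstacle.} Now $G$ is $2$-connected with the tight degree condition, and the heart of the argument is a Brooks/Gallai-type analysis: fix a colour $i$ with $f_{i}\not\equiv 0$ and a root $r$, take a DFS spanning tree, and try to build the partition from the leaves upward --- placing a vertex in $V_{i}$ whenever the residual degree permits and otherwise recursing on the remaining colours. The only way this greedy scheme can get stuck is if every block of $G$ is rigid in the precise sense that it is a complete graph, or a cycle on which the $f_{j}$'s behave like the classical $K_{n}$ and odd-cycle obstructions do for constant lists; together with $2$-connectedness this forces $G$ to be a single monoblock, hence hard-constructible, the final contradiction. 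The delicate point --- and where I expect nearly all the real difficulty to concentrate --- is exactly this core classification together with the cut-vertex bookkeeping: one must verify that the gluing operations admitted in the definition of ``hard-constructible'' are precisely broad enough to capture every rigid $2$-connected configuration and every way finitely many of them can be amalgamated, and that the extra latitude of strict $f_{i}$-degeneracy (a whole forest per class rather than an independent set) combined with the equality $\sum_{i}f_{i}(v)=\deg(v)$ produces no unclassified obstacle. This refinement of Gallai's theorem to variable degeneracy is the substance of Borodin, Kostochka and Toft's proof, and is the step I would most expect to fight with.
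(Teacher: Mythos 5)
First, a point of reference: the paper does not actually prove this statement. \autoref{HARD} is quoted from Borodin, Kostochka and Toft, and the paper explicitly declines even to define ``hard-constructible'' (see the remark following it). The closest thing to an in-paper proof is \autoref{MR}, whose argument for valued covers specializes, via the ID-cover correspondence in the introduction, to the partition setting; so the fair comparison is against that proof.

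Your outline follows the same overall strategy as that proof --- minimal counterexample, reduction to the tight case $\sum_i f_i(v)=\deg(v)$, reduction to $2$-connectedness by splitting the budget at a cut-vertex, then a classification of the rigid $2$-connected configurations --- and the necessity direction is right in shape, though your inductive step at a gluing vertex needs more care: the restriction of a partition of $G$ to a constituent piece is not by itself a violating partition of that piece, because the budget at the gluing vertex has been split as $f_i=f_i^{(1)}+f_i^{(2)}$; one must extract from each piece a ``bad'' subgraph through the gluing vertex witnessing failure of strict degeneracy and then take the union of the two witnesses, as the paper does at the end of the proof of \autoref{MR}. The genuine gap is in the sufficiency direction: the entire content of the theorem is the claim you defer, namely that when $G$ is $2$-connected and tight, failure of the colouring scheme forces $G$ to be one of the listed obstructions. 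You assert this (``the only way this greedy scheme can get stuck is if every block is rigid\dots'') but give no mechanism for proving it; as written it is a restatement of the theorem rather than a proof. The paper's proof of \autoref{MR} supplies exactly the missing machinery: the analogue of \autoref{NZ-PM} (either every relevant interaction is ``full'' and all values positive, or one finishes by \autoref{GE}); \autoref{COM} (a vertex $w$ is adjacent to both or neither of two vertices lying in a common block of $G-w$, provided neither pair disconnects $G$); the choice of $w$ of minimum degree $\delta$, adjacent if possible to a vertex of larger degree; and then the case split on whether $G-w$ is $2$-connected, which yields respectively $G=K_{\delta+1}$ (with a further analysis according to whether $f$ is semi-constant) and $G$ a cycle with $\delta=\kappa=2$ and $f\equiv 1$. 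Some argument of this kind --- not a DFS-tree greedy sketch --- is what actually closes the proof, and it is precisely the part your proposal leaves open.
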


\begin{remark}
The exact definitions of ``monoblock'' in \autoref{THM4}, ``$R$-constructible'' in \autoref{B} and ``hard-constructible'' in \autoref{HARD} are not given here, so we refer the readers to \cite{MR1743629} for more details. 
\end{remark}

As proved in \cite{MR3484719} by M\'a\v{c}ajov\'a, Raspaud and \v{S}koviera, every signed graph $G$ satisfies $\chi^{\pm}(G) \leq \Delta(G) + 1$ and there are three classes of signed simple graphs for which the equality holds. Schweser and Stiebitz \cite{MR3612419} extended this Brooks' type result by characterizing the degree-choosable signed multigraphs. 

A graph $G$ is \textbf{DP-degree-colorable} if $(H, f)$ has a DP-coloring whenever $f$ is a function from $V(H)$ to $\{0, 1\}$ and $f(v, 1) + f(v, 2) + \dots + f(v, s) \geq \deg_{G}(v)$ for each $v \in V(G)$. A \textbf{GDP-tree} is a connected graph in which every block is either a cycle or a complete graph. Bernshteyn, Kostochka, and Pron \cite{MR3686937} gave a Brooks' type result for DP-coloring. A more detailed characterization of DP-degree-colorable multigraphs can be found in \cite{MR3948125}.  
\begin{theorem}[Bernshteyn, Kostochka, and Pron \cite{MR3686937}]\label{DL}
Let $G$ be a connected graph. Then $G$ is not DP-degree-colorable if and only if $G$ is a GDP-tree. 
\end{theorem}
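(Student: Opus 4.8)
The statement splits into two directions, which I would handle by separate inductions. After discarding the vertices $(v,p)$ with $f(v,p)=0$, it is convenient to view a valued cover $(H,f)$ with $\sum_{p}f(v,p)\ge\deg_{G}(v)$ as a list-with-correspondence instance: a list $\mathcal{L}(v)$ with $|\mathcal{L}(v)|\ge\deg_{G}(v)$ at each $v$ and a partial matching $M_{uv}$ between $\mathcal{L}(u)$ and $\mathcal{L}(v)$ for each edge $uv$, a DP-coloring being a choice $\phi(v)\in\mathcal{L}(v)$ with $(\phi(u),\phi(v))\notin M_{uv}$ for all $uv\in E(G)$. Thus for sufficiency I must build, for each GDP-tree $G$, such an instance with $|\mathcal{L}(v)|=\deg_{G}(v)$ and no DP-coloring; for necessity I must show every instance over a connected non-GDP-tree admits a DP-coloring.

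Sufficiency is the routine half; I would induct on the number of blocks. A single block is $K_{m}$ — take $\mathcal{L}(v)=\{1,\dots,m-1\}$ with identity matchings, so a DP-coloring would properly $(m-1)$-color $K_{m}$ — or a cycle $C_{n}$ — take all $|\mathcal{L}(v)|=2$ and choose the matchings so that the number of them that swap the two colors has opposite parity to $n$, which a parity count around the cycle shows forbids a DP-coloring. For the inductive step pick an end-block $B$ with cut-vertex $r$, set $A=G-(V(B)\setminus\{r\})$, take bad instances for $A$ and $B$ by induction, and glue them by letting $\mathcal{L}(r)$ be the disjoint union of the two lists used for $r$ (keeping all matchings); then $|\mathcal{L}(r)|\ge\deg_{A}(r)+\deg_{B}(r)=\deg_{G}(r)$, and any DP-coloring of the glued instance restricts — depending on which of the two parts contains $\phi(r)$ — to a DP-coloring of the $A$-instance or of the $B$-instance, a contradiction either way.

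Necessity is the substantial half; I would prove, by induction on $|V(G)|$, that a connected $G$ with $|\mathcal{L}(v)|\ge\deg_{G}(v)$ everywhere admits a DP-coloring unless it is a GDP-tree with $|\mathcal{L}(v)|=\deg_{G}(v)$ everywhere. The engine is the Brooks--greedy step: if some $v_{0}$ has $|\mathcal{L}(v_{0})|>\deg_{G}(v_{0})$, order $V(G)$ by a BFS from $v_{0}$ and color in reverse BFS order — every vertex but $v_{0}$ then has an uncolored neighbor (its parent) when colored, so loses at most $\deg_{G}(v)-1$ colors, and $v_{0}$ loses at most $\deg_{G}(v_{0})<|\mathcal{L}(v_{0})|$. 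Hence we may assume $|\mathcal{L}(v)|=\deg_{G}(v)$ everywhere and $G$ not a GDP-tree. If $G$ is not $2$-connected it has at least two blocks (as $K_{1},K_{2}$ are GDP-trees); peel off a leaf block $B$ of the block tree, with cut-vertex $r$, and set $G'=G-(V(B)\setminus\{r\})$. A short case analysis over the possible shapes of $B$ (a cycle, a complete graph, or a $2$-connected graph that is neither) shows that at most $\deg_{B}(r)$ colors of $r$ cannot be extended across $B$ once $r$ is colored; choosing $B$ complete or a cycle whenever possible keeps $G'$ a non-GDP-tree, so restricting $\mathcal{L}(r)$ in $G'$ to the remaining (at least $\deg_{G'}(r)$) extendable colors and applying the induction hypothesis to $G'$ yields a DP-coloring of $G'$ that extends across $B$. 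If instead $G$ is $2$-connected, it is neither complete nor a cycle, so the classical Brooks structural lemma gives $v_{1},v_{2},v_{n}$ with $v_{1}v_{n},v_{2}v_{n}\in E(G)$, $v_{1}v_{2}\notin E(G)$ and $G-\{v_{1},v_{2}\}$ connected. Either we can choose colors for $v_{1}$ and $v_{2}$ so that together they forbid at most one color at $v_{n}$, and we finish by coloring the rest in reverse BFS order from $v_{n}$ inside $G-\{v_{1},v_{2}\}$ with $v_{n}$ last (it then loses at most $\deg_{G}(v_{n})-1$ colors); or the matchings $M_{v_{1}v_{n}},M_{v_{2}v_{n}}$ saturate $\mathcal{L}(v_{1}),\mathcal{L}(v_{2})$ with disjoint images, forcing $\deg_{G}(v_{n})\ge\deg_{G}(v_{1})+\deg_{G}(v_{2})$, and then we color $v_{n}$ with a color outside the image of $M_{v_{2}v_{n}}$ (one exists), which costs $v_{2}$ nothing, so $v_{2}$ keeps slack in $G-v_{n}$ and the greedy step finishes.

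The main obstacle is that several steps re-run arguments from ordinary Brooks-type proofs in the presence of arbitrary correspondences, so the convenient ``shared color'' moves that are free for list coloring need genuine alternatives. Concretely: the leaf-block reduction rests on the counting fact that a single block $B$ can block at most $\deg_{B}(r)$ colors at its cut-vertex, transparent for coloring but requiring one to track the matchings $M_{rw}$ with $w\in B$; and the $2$-connected core needs the second branch above, absent from Brooks' theorem, for when one cannot force $v_{1}$ and $v_{2}$ to kill the same color at $v_{n}$. Assembling these — and making sure every invocation of the induction hypothesis lands on a connected non-GDP-tree with the correct list sizes — is where the care lies; the sufficiency direction, by contrast, is bookkeeping once the $K_{m}$ and $C_{n}$ gadgets are in hand.
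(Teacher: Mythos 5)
Your argument is correct in its architecture, but it does not follow this paper's route: here \autoref{DL} is quoted from Bernshteyn--Kostochka--Pron and is recovered as the special case $f\colon V(H)\to\{0,1\}$ of the general \autoref{MR}, whose proof is a Borodin--Kostochka--Toft-style minimal-counterexample argument (force $2$-connectedness and perfect matchings via Lemmas \ref{2-con} and \ref{NZ-PM}, then analyse a minimum-degree vertex $w$ and the blocks of $G-w$ through \autoref{COM}). You instead run the classical Erd\H{o}s--Rubin--Taylor/Brooks scheme specialised to DP-coloring: explicit bad gadgets for $K_m$ and $C_n$ glued along the block tree for sufficiency (your parity condition on the swap matchings is exactly the circular-ladder versus M\"obius-ladder dichotomy in the paper's building covers, and your list-splitting at the cut vertex is the paper's \autoref{constructible}), and for necessity a leaf-block peeling plus the $v_1,v_2,v_n$ lemma, where your second branch --- both matchings saturating $\mathcal{L}(v_1),\mathcal{L}(v_2)$ with disjoint images, whence $\deg(v_n)\ge\deg(v_1)+\deg(v_2)$ and $v_n$ can be coloured at no cost to $v_2$ --- is the correct DP substitute for the ``give $v_1$ and $v_2$ the same colour'' move. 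The paper's approach buys arbitrary nonnegative $f$ and the full ``constructible'' characterisation; yours is shorter and self-contained for the $\{0,1\}$ case.

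The one step you should not leave to ``a short case analysis over the possible shapes of $B$'' is the claim that a leaf block $B$ blocks at most $\deg_B(r)$ colours at its cut vertex $r$. A shape-by-shape argument is the wrong tool here: for $B$ $2$-connected but neither complete nor a cycle it is \emph{not} true that every colour of $r$ extends across $B$ (already for a theta graph a single colour at $r$ can propagate along the three paths and kill the entire list of the opposite branch vertex), so that case does not reduce to ``$B$ is DP-degree-colourable''. The counting claim is nevertheless true, uniformly in $B$, by your own greedy step: if the set $S$ of blocked colours satisfied $|S|>\deg_B(r)$, replace $\mathcal{L}(r)$ by $S$; the resulting instance on $B$ has slack at $r$ while every $w\in V(B)\setminus\{r\}$ still has $|\mathcal{L}(w)|=\deg_B(w)$, so reverse-BFS greedy from $r$ (the paper's \autoref{GE}) produces a DP-colouring of $B$, which extends some member of $S$ --- a contradiction. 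With that substitution, and noting that when every leaf block is ``bad'' there are at least two of them so peeling one still leaves $G'$ a non-GDP-tree, your proof is complete.
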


In the following, we give a degree type result which can unify all the results mentioned in this section. Firstly, some preliminaries are presented. 
\begin{itemize}
\item The graph $\widetilde{K_{p}}$ is the Cartesian product of the complete graph $K_{p}$ and an independent $s$-set. 

\item The \textbf{circular ladder graph $\Gamma_{n}$} is the Cartesian product of the cycle $C_{n}$ and an independent set with two vertices. 

\item The \textbf{M\"{o}bius ladder} $M_{n}$ is the graph with vertex set $\big\{\,(i, j) \mid i \in [n], j \in [2]\,\big\}$, in which two vertices $(i, j)$ and $(i', j')$ are adjacent if and only if either
\begin{enumerate}[label = ---]
\item $i' = i + 1$ and $j = j'$ for $1 \leq i \leq n-1$, or 
\item $i = n$, $i'= 1$ and $j \neq j'$. 
\end{enumerate}
\end{itemize}

\begin{definition}
Let $(H, f)$ be a valued cover of a graph $G$. A \textbf{kernel} of $H$ is the subgraph obtained from $H$ by deleting each vertex $(u, j)$ with $f(u, j) = 0$. 
\end{definition}

\begin{definition}
A \textbf{building cover} is a valued cover $(H, f)$ of a $2$-connected graph $B$ satisfying
\begin{equation*}
f(v, 1) + f(v, 2) + \dots + f(v, s) = \deg_{B}(v)
\end{equation*} 
for each $v \in V(B)$ and at least one of the following holds:  
\begin{enumerate}[label = (\roman*)]
\item The kernel of $H$ is isomorphic to $B$. We call this cover a \textbf{monoblock}. 
\item If $B$ is isomorphic to a complete graph $K_{p}$ for some $p \geq 2$, then the kernel of $H$ is isomorphic to $\widetilde{K_{p}}$ with $f$ being constant on each component of $\widetilde{K_{p}}$. 
\item If $B$ is isomorphic to an odd cycle, then the kernel of $H$ is isomorphic to the circular ladder graph with $f = 1$. 
\item If $B$ is isomorphic to an even cycle, then the kernel of $H$ is isomorphic to the M\"{o}bius ladder with $f = 1$. \qed
\end{enumerate}
\end{definition}

\begin{definition}\label{constructible}
Every building cover is \textbf{constructible}. A valued cover $(H, f)$ of a graph $G$ is also \textbf{constructible} if it is obtained from a constructible valued cover $(H^{(1)}, f^{(1)})$ of $G^{(1)}$ and a constructible valued cover $(H^{(2)}, f^{(2)})$ of $G^{(2)}$ such that all of the following hold: 
\begin{enumerate}[label = (\roman*)]
\item the graph $G$ is obtained from $G^{(1)}$ and $G^{(2)}$ by identifying a vertex $w_{1}$ in $G^{(1)}$ and a vertex $w_{2}$ in $G^{(2)}$ as a new vertex $w$, and
\item the cover $H$ is obtained from $H^{(1)}$ and $H^{(2)}$ by identifying $(w_{1}, q)$ and $(w_{2}, q)$ as a new vertex $(w, q)$ for each $q \in [s]$, and  
\item $f(w, q) = f^{(1)}(w_{1}, q) + f^{(2)}(w_{2}, q)$ for each $q \in [s]$, $f = f^{(1)}$ on $H^{(1)} - X_{w}$, and $f = f^{(2)}$ on $H^{(2)} - X_{w}$. We simply write $\bm{f = f^{(1)} + f^{(2)}}$. \qed
\end{enumerate}
\end{definition}

By the definition of constructible cover, it is easy to obtain the following lemma. 
\begin{lemma}\label{FC}
If $(H, f)$ is a constructible cover, then $f(v, 1) + f(v, 2) + \dots + f(v, s) = \deg_{G}(v)$ for each $v \in V(G)$. 
\end{lemma}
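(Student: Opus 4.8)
The plan is to induct on the number of times the gluing operation in \autoref{constructible} is applied in the construction of $(H, f)$ (equivalently, on the number of building covers used). For the base case, $(H, f)$ is itself a building cover of some graph $B = G$; but then the desired identity $f(v, 1) + f(v, 2) + \dots + f(v, \kappa) = \deg_{B}(v) = \deg_{G}(v)$ for every $v \in V(G)$ is part of the definition of a building cover, so there is nothing to prove.

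For the inductive step, suppose $(H, f)$ of $G$ is obtained from constructible valued covers $(H^{(1)}, f^{(1)})$ of $G^{(1)}$ and $(H^{(2)}, f^{(2)})$ of $G^{(2)}$ by identifying $w_{1} \in V(G^{(1)})$ with $w_{2} \in V(G^{(2)})$ into a new vertex $w$, exactly as in \autoref{constructible}. By the induction hypothesis, $\sum_{q \in [\kappa]} f^{(i)}(u, q) = \deg_{G^{(i)}}(u)$ for every $u \in V(G^{(i)})$ and $i \in \{1, 2\}$. I would then split into two cases. If $v \in V(G) \setminus \{w\}$, then $v$ lies in exactly one of $G^{(1)} - w_{1}$ or $G^{(2)} - w_{2}$, say the former; since the vertex sets $V(G^{(1)}) \setminus \{w_{1}\}$ and $V(G^{(2)}) \setminus \{w_{2}\}$ are disjoint, the identification neither removes nor creates an edge incident to $v$, so $\deg_{G}(v) = \deg_{G^{(1)}}(v)$, and using $f = f^{(1)}$ on $H^{(1)} - X_{w}$ gives $\sum_{q} f(v, q) = \sum_{q} f^{(1)}(v, q) = \deg_{G^{(1)}}(v) = \deg_{G}(v)$. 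If $v = w$, then the edges of $G$ at $w$ are precisely the edges of $G^{(1)}$ at $w_{1}$ together with the edges of $G^{(2)}$ at $w_{2}$, and these two sets are disjoint, so $\deg_{G}(w) = \deg_{G^{(1)}}(w_{1}) + \deg_{G^{(2)}}(w_{2})$; combining with $f(w, q) = f^{(1)}(w_{1}, q) + f^{(2)}(w_{2}, q)$ yields $\sum_{q} f(w, q) = \sum_{q} f^{(1)}(w_{1}, q) + \sum_{q} f^{(2)}(w_{2}, q) = \deg_{G^{(1)}}(w_{1}) + \deg_{G^{(2)}}(w_{2}) = \deg_{G}(w)$. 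This closes the induction.

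The only delicate point — the ``main obstacle'', such as it is — is the degree bookkeeping under the vertex identification: one must check that gluing along a single vertex keeps $G$ simple and does not double-count any edge incident to $w$, which is exactly the tacit assumption that $G^{(1)}$ and $G^{(2)}$ share no vertex other than the one being identified. Everything else is a direct unwinding of the definitions of building cover and constructible cover, so I expect the write-up to be short.
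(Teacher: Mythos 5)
Your proof is correct: the paper states Lemma~\ref{FC} without proof, treating it as immediate from the definitions, and your induction on the number of gluing operations is exactly the routine unwinding that is being left implicit (base case from the degree identity built into the definition of a building cover, inductive step from $f = f^{(1)} + f^{(2)}$ and the additivity of degrees under identification of a single vertex of two otherwise disjoint graphs). No further comment is needed.
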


\begin{lemma}\label{GE}
Let $G$ be a connected graph and $H$ be a valued cover with $f(v, 1) + f(v, 2) + \dots + f(v, s) \geq \deg_{G}(v)$ for each vertex $v \in V(G)$. If there exists a vertex $w$ such that $f(w, 1) + f(w, 2) + \dots + f(w, s) > \deg_{G}(w)$, then $H$ has a strictly $f$-degenerate transversal.  
\end{lemma}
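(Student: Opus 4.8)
The plan is to argue by induction on the number of vertices of $G$, or equivalently on $\sum_{v}\big(f(v,1)+\dots+f(v,\kappa)\big)$, constructing the desired transversal greedily by processing the vertices of $G$ in a carefully chosen order. The key idea is to exploit the strict surplus at $w$: since $f(w,1)+\dots+f(w,\kappa) > \deg_{G}(w)$, we have some slack to spend, and we want to ``propagate'' that slack through the graph so that whichever vertex we color last still has a usable copy. Concretely, first I would handle the base case where $G$ is a single vertex (or, more conveniently, reduce to the case where $G$ is connected and $w$ has a neighbor), where the claim is immediate: the surplus forces some $f(w,p)\ge 1$, so put $(w,p)$ into $R$ and note $H[R]$ is a single vertex, which is strictly $f$-degenerate because $f(w,p)\ge 1 > 0 = \deg_{H[R]}(w,p)$.

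For the inductive step, fix a neighbor $u$ of $w$ in $G$ and orient the idea as follows. Build an $f$-removing-type order $v_{1}=w', v_{2}, \dots, v_{n}$ obtained from a spanning tree of $G$ rooted so that each $v_i$ ($i\ge 2$) has at least one already-processed neighbor; equivalently, process vertices in order of a BFS/DFS from $w$. Now color the vertices one at a time in this order. When we reach $v_{i}$, it has some number $d_i^{-}$ of earlier neighbors (at least $1$ for $i\ge 2$) and $d_i^{+} = \deg_G(v_i) - d_i^{-}$ later neighbors; each earlier neighbor's chosen copy forbids at most one copy in $X_{v_i}$ via the matching $\mathscr{M}$. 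The point is to pick a copy $(v_i,p_i)\in X_{v_i}$, not forbidden by any earlier choice, such that we still satisfy the strict-degeneracy bound, namely such that the number of earlier neighbors $v_j$ with $(v_j,p_j)$ matched to $(v_i,p_i)$ is strictly less than $f(v_i,p_i)$. Since $\sum_p f(v_i,p) \ge \deg_G(v_i) = d_i^{-}+d_i^{+}$, a counting/pigeonhole argument over the copies in $X_{v_i}$ shows that unless things are perfectly tight everywhere one can make such a choice; and the strict inequality at $w$ is exactly the extra unit that breaks the tightness and lets the argument go through along the tree path from $w$ to $v_i$.

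The cleanest way to make the propagation rigorous is probably not a raw greedy argument but a reduction: use the surplus at $w$ to decrease $f(w,p)$ by $1$ for some $p$ with $f(w,p)\ge 1$, obtaining $f'$ with $\sum_p f'(v,p)\ge \deg_G(v)$ still holding everywhere and with equality now possibly at $w$; then if $G$ is $2$-connected one can instead delete a suitable vertex or contract an edge and invoke induction, while if $G$ has a cutvertex one splits $G$ at the cutvertex into blocks, distributes $f$ accordingly (as in \autoref{constructible}), and applies induction to each piece, gluing the transversals back together since there is no interaction between the $X$-classes across the cutvertex except at the shared vertex. Either way, one arranges that the recursive instance again has a strict surplus somewhere, or else is small enough to finish directly, so that $H[R]$ is strictly $f$-degenerate because the constructed order on $V(G)$ is an $f$-removing order for $H[R]$ (each $(v_i,p_i)$ has fewer than $f(v_i,p_i)$ neighbors among later-chosen copies).

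The main obstacle I anticipate is controlling the propagation of the surplus: a purely local greedy choice can ``waste'' the extra unit at $w$ early and then get stuck at a vertex far from $w$ where every copy is blocked. Handling this correctly requires either (a) committing to an order in which $w$ is processed essentially last along each branch, so that the slack is preserved until needed, or (b) a global potential/discharging-style bookkeeping that tracks, for each vertex, how much of the $\sum_p f$-budget has been ``used up'' by already-colored neighbors versus reserved for not-yet-colored ones. Getting the tightness analysis exactly right — in particular ruling out the degenerate configurations (complete graphs, cycles with the ladder/Möbius covers) that would otherwise obstruct a transversal, and confirming the strict inequality genuinely suffices to avoid all of them — is the technical heart of the argument; I expect the proof in the paper leans on the earlier structural definitions of constructible and building covers to package this cleanly.
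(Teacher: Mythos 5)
Your main greedy sketch has the processing order backwards, and this creates a genuine gap. If you BFS from $w$ and color $w$ first, the surplus at $w$ is never used (when $w$ is colored it has no earlier neighbors, so any copy with $f(w,p)\geq 1$ would do), and the argument then breaks at every vertex $v_i$ with $d_i^{+}=0$, i.e.\ one all of whose neighbors are already colored: there the total number of conflicts distributed over $X_{v_i}$ can equal $\deg_G(v_i)=\sum_p f(v_i,p)$, and pigeonhole only yields a copy with conflicts $\leq f(v_i,p)$ rather than strictly less. Such vertices always exist (at minimum the last one processed). Your assertion that the strict inequality at $w$ ``breaks the tightness along the tree path'' is not substantiated and is false for the order you wrote down. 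The fix you mention only in passing as option (a) --- process $w$ \emph{last}, so that every other vertex retains at least one uncolored neighbor and hence sees at most $\deg_G(v)-1<\sum_p f(v,p)$ conflicts when its turn comes, while $w$ itself is saved by the strict surplus --- is exactly what is needed; once you adopt it, the ``global bookkeeping'' of option (b) and your entire ``cleanest way'' reduction paragraph (2-connectivity, contraction, block splitting) become unnecessary. The reverse order is then an $f$-removing order of $H[R]$, certifying strict $f$-degeneracy, as you correctly note.

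Relatedly, your expectation that the ``technical heart'' is ruling out the complete-graph/ladder/M\"{o}bius obstructions misreads the statement: those obstructions can only occur when $\sum_p f(v,p)=\deg_G(v)$ at \emph{every} vertex, which is precisely the situation this lemma excludes, and the lemma is proved before and independently of \autoref{MR}. The paper's proof is a three-line induction: remove $w$; every component of $G-w$ contains a neighbor of $w$, whose degree dropped while its $f$-sum did not, so by minimality $H-X_w$ has a strictly $f$-degenerate transversal $R$; and since each matching contributes at most one edge from $R$ into $X_w$, we get $\sum_q \deg_R(w,q)\leq\deg_G(w)<\sum_q f(w,q)$, so some $(w,q)$ can be added to $R$. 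This is just the recursive form of the reverse-order greedy you almost wrote; commit to it and drop the rest.
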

\begin{proof}
Suppose that $G$ is a connected graph with minimum number of vertices such that there exist a valued cover $(H, f)$ satisfying the degree condition and having no strictly $f$-degenerate transversals. Note that each component of $G - w$ and related cover satisfies the condition of \autoref{GE}. By the minimality, $H - X_{w}$ has a strictly $f$-degenerate transversal $R$. Since $f(w, 1) + f(w, 2) + \dots + f(w, s) > \deg_{G}(w)$, there exists a vertex $(w, q)$ in $X_{w}$ such that it has less than $f(w, q)$ neighbors in $R$, which implies that $(w, q)$ together with $R$ is a strictly $f$-degenerate transversal of $H$, a contradiction. 
\end{proof}

Now, we are ready to formulate our main result. 
\begin{theorem}\label{MR}
Let $G$ be a connected graph and $(H, f)$ be a valued cover with $f(v, 1) + f(v, 2) + \dots + f(v, s) \geq \deg_{G}(v)$ for each vertex $v \in V(G)$. Then $H$ has a strictly $f$-degenerate transversal if and only if $(H, f)$ is non-constructible. 
\end{theorem}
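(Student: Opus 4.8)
### Proof plan for Theorem~\ref{MR}

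The plan is to prove both directions, with the forward direction (constructible $\Rightarrow$ no strictly $f$-degenerate transversal) being the routine part and the reverse direction (non-constructible $\Rightarrow$ has a transversal) requiring a careful minimal-counterexample analysis. For the forward direction, I would argue by induction on the construction of a constructible cover. The base case is a building cover $(H,f)$ of a block $B$: in each of the four types one checks directly that the kernel has no strictly $f$-degenerate transversal --- for a monoblock, $H\cong B$ with $f\equiv 1$ forces a transversal to be an independent set of size $|V(B)|$, impossible since $B$ is a block (cycle or complete graph); for $\widetilde{K_p}$, $\Gamma_n$, $M_n$ one does the analogous small computation using that $f$ is constant on components and the counting $\sum_q f(v,q)=\deg_B(v)$. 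For the inductive step, if $(H,f)$ is obtained by gluing $(H^{(1)},f^{(1)})$ and $(H^{(2)},f^{(2)})$ at a vertex $w$, and $R$ were a strictly $f$-degenerate transversal of $H$, then the restriction $R^{(i)}$ of $R$ to $H^{(i)}$ is a transversal of $H^{(i)}$, and since $f=f^{(1)}+f^{(2)}$ at $w$ while $H^{(1)}[R^{(1)}]$ and $H^{(2)}[R^{(2)}]$ share only the vertex $(w,q)\in R$, an $f$-removing order of $H[R]$ splits into $f^{(i)}$-removing orders of $H^{(i)}[R^{(i)}]$ --- contradicting the induction hypothesis on at least one side.

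For the reverse direction I would take a connected graph $G$ with a valued cover $(H,f)$ satisfying $\sum_q f(v,q)\ge \deg_G(v)$ for all $v$, having no strictly $f$-degenerate transversal, and minimizing $|V(G)|$ (then, secondarily, $|E(G)|$ or $\sum_{v,q} f(v,q)$); the goal is to show $(H,f)$ is constructible. By Lemma~\ref{GE} we may assume equality $\sum_q f(v,q)=\deg_G(v)$ everywhere. First I would show $G$ is $2$-connected: if $w$ is a cutvertex, split $G$ at $w$ into $G^{(1)}, G^{(2)}$ with the induced covers; redistribute $f(w,q)$ as $f^{(1)}(w_1,q)+f^{(2)}(w_2,q)$ with $f^{(i)}(w_i,q)$ counting the neighbors of $(w,q)$ inside $X_{N_{G^{(i)}}(w)}$ --- this keeps the degree equality on each piece, so by minimality each $(H^{(i)},f^{(i)})$ either has a transversal (which would glue to one for $H$, since a transversal of $H^{(i)}$ extends greedily across $w$ by the degree count, a contradiction) or is constructible; in the latter case $(H,f)$ is constructible by definition. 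So $G$ is $2$-connected. Next, using a $2$-connected minimal counterexample, I would analyze low-degree structure: if some $(v,p)$ has $f(v,p)=0$ it is deleted in the kernel, and a vertex $v$ with, say, $f(v,p)\ge 1$ for a value $p$ whose $(v,p)$ has few neighbors can be colored last --- more precisely, I would show every vertex of the kernel at every "color'' $(v,p)$ has exactly $f(v,p)$ neighbors of the appropriate kind, forcing the kernel to be regular-like, and then invoke a Brooks-type / Gallai-type argument (the classical low-vertex-block decomposition of Gallai, or the argument behind Theorem~\ref{DL}) to conclude $G$ is a cycle or a complete graph, and then identify which building cover $(H,f)$ must be by examining the matchings $\mathscr{M}_{uv}$ and the constancy of $f$.

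The main obstacle is this last step: showing that a $2$-connected minimal counterexample forces $G$ to be a single block and then pinning down the cover structure. The difficulty is that the cover $H$ has more freedom than an ordinary graph coloring instance --- the matchings $\mathscr{M}_{uv}$ need not be "consistent'' as in an ID-cover --- so the Gallai/Brooks argument must be carried out intrinsically in $H$ with the $f$-degeneracy notion. Concretely, I expect to need: (a) a greedy/ordering lemma showing that if $H[R']$ is strictly $f$-degenerate on $G-v$ then it extends to $G$ unless $(v,p)$ has exactly $f(v,p)$ neighbors in $R'$ for \emph{every} choice of $(v,p)$ and every such $R'$; (b) a connectivity-transfer argument showing that this rigidity propagates, so that the kernel is "tight'' everywhere; and (c) a case analysis on whether $G$ is complete, an odd cycle, or an even cycle, matching each to building cover types (ii)--(iv), with type (i) (monoblock) covering the residual case where the kernel is isomorphic to $B$ itself. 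I would expect to lean on Lemma~\ref{FC} throughout to keep the bookkeeping of $f$ under gluing under control, and to handle the "$f$ constant on components'' conditions in (ii)--(iv) by tracking how the deleted zero-vertices interact with the matchings.
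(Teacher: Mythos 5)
Your ``only if'' direction (constructible $\Rightarrow$ no transversal) is essentially sound and matches the paper's architecture: direct verification of the four building covers plus an inductive step for the gluing. Your device of splitting an $f$-removing order of $H[R]$ at the fibre $X_w$ is a valid alternative to the paper's argument, which instead extracts ``tight'' subsets $R_i\ni(w,1)$ with $\deg_{R_i}(x)\ge f^{(i)}(x)$ and takes their union. One correction: a monoblock does not have $f\equiv 1$; its kernel is a copy of $B$ in which the unique surviving vertex of each fibre carries $f(v,q_v)=\deg_B(v)$, so the forced transversal is that copy of $B$ itself (every vertex having degree exactly $f$), not an independent set.

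The ``if'' direction, however, has two genuine gaps. First, your cut-vertex reduction fails as stated: setting $f^{(i)}(w,q)$ to be the number of neighbours of $(w,q)$ in $H^{(i)}$ gives $\sum_q f^{(i)}(w,q)=\sum_{u\in N_{G^{(i)}}(w)}|\mathscr{M}_{wu}|$, which can be as large as $\kappa\cdot\deg_{G^{(i)}}(w)$, so the claimed degree equality on the pieces is simply false; and you never explain how transversals of $H^{(1)}$ and $H^{(2)}$, which may select \emph{different} vertices of $X_w$, are to be glued. The paper repairs both defects at once by first producing, via Lemma~\ref{GE}, a strictly $f$-degenerate transversal $R$ of $H-X_w$ and defining $f^{(i)}(w,q)$ from the number of neighbours of $(w,q)$ in $R\cap H^{(i)}$ (this sum is at most $\deg_{G^{(i)}}(w)$ because $R$ meets each fibre once), after which it only needs to extend $R$ over $X_w$ on the single non-constructible side. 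Second, and more seriously, the whole $2$-connected case --- the heart of the theorem --- is not proved: you explicitly label it ``the main obstacle'' and list three ingredients you ``expect to need'' without supplying any of them. The paper's treatment requires real work here: a lemma that every $\mathscr{M}_{uv}$ is a perfect matching and $f>0$ may be assumed everywhere; a lemma that for suitable $w$, two vertices lying in a common block of $G-w$ are either both adjacent or both non-adjacent to $w$ (proved by choosing which $(w,p)$ to colour according to whether $f(w_2,q)\ne f(w_1,1)$ or $f(w_2,q)-1\ne f(w_1,1)$); the choice of $w$ of minimum degree adjacent, if possible, to a vertex of larger degree; and a case split on whether $G-w$ is $2$-connected, leading respectively to $G=K_{\delta+1}$ (with a semi-constant versus non-semi-constant dichotomy for $f$ identifying $\widetilde{K_{\delta+1}}$) and to $\delta=\kappa=2$, $G$ a cycle, $H$ a circular or M\"{o}bius ladder. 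None of this is present in your proposal, so the hard direction cannot be credited.
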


To prove \autoref{MR}, we will use the method of Proof by Contradiction. The ``only if'' part is easy. For the ``if'' part, we first prove a minimal counterexample $G$ is $2$-connected, and then prove $G$ is either a complete graph or a cycle, and finally prove the cover is a building cover in each case to lead a contradiction. 

\begin{proof}[Proof of \autoref{MR}]
(``if'' part $\Longleftarrow$) Suppose that $G$ is a connected graph with minimum number of vertices such that there exists a non-constructible cover $(H, f)$ satisfying the degree condition and having no strictly $f$-degenerate transversals. By \autoref{GE}, we may assume that $f(v, 1) + f(v, 2) + \dots + f(v, s) = \deg_{G}(v)$ for each vertex $v \in V(G)$. 

\begin{lemma}\label{2-con}
The graph $G$ is $2$-connected. 
\end{lemma}
\begin{proof}
Suppose that $G$ is not $2$-connected, $G^{(1)} \cup G^{(2)} = G$ and $G^{(1)} \cap G^{(2)} = \{w\}$. We may assume that $H^{(1)} \cup H^{(2)} = H$ and $H^{(1)} \cap H^{(2)} = X_{w}$, where $H^{(1)}$ is the restriction of $H$ on $G^{(1)}$ and $H^{(2)}$ is the restriction of $H$ on $G^{(2)}$. By \autoref{GE}, $H - X_{w}$ has a strictly $f$-degenerate transversal $R$. Let $\alpha_{i}(w, q)$ denote the number of neighbors of $(w, q)$ in $R \cap H^{(i)}$ for each $q \in [s]$ and $i \in [2]$. We define two nonnegative integer valued functions $f^{(1)}$ on $H^{(1)}$ and $f^{(2)}$ on $H^{(2)}$ such that $f^{(i)}$ coincides with $f$ on $H^{(i)} - X_{w}$, $f^{(i)}(w, q) \geq \alpha_{i}(w, q)$ and $\sum_{q} f^{(i)}(w, q) = \deg_{G^{(i)}}(w)$ for each $i \in [2]$. Since $G$ is not constructible, at least one of $(H^{(1)}, f^{(1)})$ and $(H^{(2)}, f^{(2)})$, say $(H^{(1)}, f^{(1)})$, is not constructible. By the minimality of $G$, there exists a strictly $f^{(1)}$-degenerate transversal $R_{1}$ for $(H^{(1)}, f^{(1)})$. Note that an $f^{(1)}$-removing order of $R_{1}$ followed by an $f$-removing order of $R \cap V(H^{(2)})$ is an $f$-removing order of $R_{1} \cup (R \cap V(H^{(2)}))$. Then $R_{1}$ together with $R \cap V(H^{(2)})$ is a strictly $f$-degenerate transversal for $(H, f)$, a contradiction. 
\end{proof}
\begin{lemma}\label{NZ-PM}
For each edge $uv \in E(G)$, $\mathscr{M}_{uv}$ is a perfect matching between $X_{u}$ and $X_{v}$. Furthermore, we may assume that $f(w, t) > 0$ for each $w \in V(G)$ and $t \in [s]$. 
\end{lemma}
\begin{proof}
Let $u$ and $v$ be two adjacent vertices in $G$ and $f(u, p) > 0$ for some $p \in [s]$. Suppose that $(u, p)$ is not incident with any edge in $\mathscr{M}_{uv}$ or is adjacent to a vertex $(v, q)$ with $f(v, q) = 0$. We define a function $f'$ on $H - X_{u}$ by defining $f'(w, t) = f(w, t)$ for each $w \neq u$ and $t \in [s]$ except that, for each vertex $(w, t)$ adjacent to $(u, p)$, set $f'(w, t) = \max\{0, f(w, t) - 1\}$. Note that $G - u$ is connected and $f'(v, t) = f(v, t)$ for each $t \in [s]$, which implies that $f'(v, 1) + f'(v, 2) + \dots + f'(v, s) = \deg_{G}(v) > \deg_{G-u}(v)$. By \autoref{GE}, $H - X_{u}$ has a strictly $f'$-degenerate transversal $R$. Note that an $f'$-removing order of $R$ followed by $(u, p)$ is an $f$-removing order. Hence, $R$ together with $(u, p)$ is a strictly $f$-degenerate transversal for $(H, f)$, a contradiction. Therefore, for each edge $uv \in E(G)$, $\mathscr{M}_{uv}$ is a perfect matching between $X_{u}$ and $X_{v}$. 

By the above arguments, each component of $H$ has the same sign (zero or positive) for the value of $f$. Since $(w, t)$ can never be chosen in $R$ if $f(w, t) = 0$, we may assume that $f$ has positive value on each vertex of $H$.
\end{proof}
Since $G$ is $2$-connected and $H$ is not a monoblock, we may assume that $s \geq 2$ by \autoref{NZ-PM}. 
\begin{lemma}\label{COM}
Suppose that $w_{1}, w_{2}$ and $w$ are three distinct vertices of $G$ such that neither $G - \{w, w_{1}\}$ nor $G - \{w, w_{2}\}$ is disconnected. If $w_{1}$ and $w_{2}$ are in the same block $B$ of $G - w$, then $w$ is adjacent to either both or none of $w_{1}$ and $w_{2}$. 
\end{lemma}
\begin{proof}
Suppose that $ww_{1} \notin E(G)$ and $ww_{2} \in E(G)$. By \autoref{NZ-PM}, $\mathscr{M}_{ww_{2}}$ is a perfect matching between $X_{w}$ and $X_{w_{2}}$. Let $(w_{1}, 1)$ and $(w_{2}, q)$ be in the same component of $H[\bigcup_{v \in B}X_{v}]$. It is easy to see that $f(w_{2}, q) - 1 \neq f(w_{1}, 1)$ or $f(w_{2}, q) \neq f(w_{1}, 1)$. In the former case, we choose a vertex $(w, p)$ in $X_{w}$ adjacent to $(w_{2}, q)$ and reduce the value of $f$ for each neighbor of $(w, p)$ by one to obtain a valued cover $(H - X_{w}, f')$; in the later case, we choose a vertex $(w, p)$ in $X_{w}$ nonadjacent to $(w_{2}, q)$ and reduce the value of $f$ for each neighbor of $(w, p)$ by one to obtain a valued cover $(H - X_{w}, f')$. By construction, we have that $f'(w_{1}, 1) \neq f'(w_{2}, q)$. Since $s \geq 2$ and neither $w_{1}$ nor $w_{2}$ is a cut-vertex of $G - w$, the valued cover $(H - X_{w}, f')$ is non-constructible, thus there exists a strictly $f'$-degenerate transversal $R$ for $H - X_{w}$. Note that an $f'$-removing order of $R$ followed by $(w, p)$ is an $f$-removing order. Therefore, $R$ together with $(w, p)$ is a strictly $f$-degenerate transversal of $H$, a contradiction. 
\end{proof}
Take a vertex $w$ in $G$ with minimum degree $\delta = \delta(G)$; furthermore, if $G$ is not $\delta$-regular, then we choose $w$ adjacent to a vertex of degree greater than $\delta$. 

\begin{case}
The graph $G - w$ is $2$-connected.
\end{case}
By \autoref{COM}, $w$ is adjacent to all the vertices of $G - w$ and thus $G = K_{\delta + 1}$. 

\begin{subcase}
The function $f$ is not semi-constant. 
\end{subcase}
Suppose that there exists a component $Z$ of $H$ such that $f$ is not constant on it. Let $m$ be the minimum value of $f$ over all the vertices in $Z$. Hence, there exist two adjacent vertices $(u, p)$ and $(v, q)$ such that $f(u, p) = m$ and $f(v, q) > m$. Note that $f(v, 1) + f(v, 2) + \dots + f(v, s) = \delta$, thus $\delta \geq m + 1$. Let $S$ be a set of vertices consisting of $(v, q)$ and $m$ other neighbors of $(u, p)$ in the component $Z$. We obtain a function $f'$ on $H - X_{S}$ from $f$ by the following: for each vertex $(w, t)$ in $H - X_{S}$, $f'(w, t) = \max\big\{0, f(w, t) - \tau(w, t)\big\}$, where $X_{S}$ is the union of $X_{v}$ taken over all $v \in S$, and $\tau(w, t)$ is the number of neighbors of $(w, t)$ in $S$. Note that the degree of $u$ decreases by $m+1$, but 
\begin{equation*}
\sum_{t=1}^{s} f'(u, t) = \sum_{t=1}^{s} f(u, t) - m. 
\end{equation*}
Note that $G - S = K_{\delta - m}$ is connected. By \autoref{GE}, $H - X_{S}$ has a strictly $f'$-degenerate transversal $R$. Note that an $f'$-removing order of $R$ followed by $(v, q)$ and any order of other vertices in $S$ is an $f$-removing order of $R \cup S$  (here we need the condition $f(v, q) > m$). Hence, $R$ together with $S$ is a strictly $f$-degenerate transversal of $H$, a contradiction. 
\begin{subcase}
The function $f$ is semi-constant. 
\end{subcase}
Let $(w, q)$ be in a component $Z$ of $H$ and $f$ be constant on $Z$. Let $f'$ be obtained from $f$ by decreasing the output of $f$ by $1$ for each neighbor of $(w, q)$. By the previous arguments, $f'$ must be semi-constant on $H - X_{w}$, for otherwise $H - X_{w}$ has a strictly $f'$-degenerate transversal $R$, which implies that $R$ together with $(w, q)$ is a strictly $f$-degenerate transversal of $H$, a contradiction. Therefore, the closed neighborhood of $(w, q)$ induces a component of $H$ and each component of $H$ is isomorphic to $K_{\delta+1}$, which implies that $H$ is isomorphic to $\widetilde{K_{\delta + 1}}$, a contradiction. 
\begin{case}
The graph $G - w$ is not $2$-connected. 
\end{case}

As a consequence, $G - w$ has at least two end-blocks. Note that $\delta(G) = \delta$ and $\delta(G - w) \geq \delta - 1$, thus each end-block of $G - w$ has at least $\delta$ vertices. By \autoref{COM}, $w$ is adjacent to each non-cut vertex in the end-blocks of $G - w$, and $\deg_{G}(w) = \delta \geq 2(\delta -1)$, which implies that $\delta \leq 2$. Recall that $\deg_{G}(w) = f(w, 1) + f(w, 2) + \dots + f(w, s) \geq s \geq 2$, thus $\delta = s = 2$. Moreover, $G - w$ has only two end-blocks, each one is isomorphic to $K_{2}$. By the choice of $w$, the graph $G$ is $2$-regular, hence it is a cycle and $f = 1$. By \autoref{NZ-PM}, $H$ is a circular ladder graph or M\"{o}bius ladder. It is not hard to see that, if $G$ is an odd cycle and $H$ is a M\"{o}bius ladder, or $G$ is an even cycle and $H$ is a circular ladder graph, then $H$ has a strictly $1$-degenerate transversal (independent set) of $H$, a contradiction. 

\medskip
(``only if'' part $\Longrightarrow$) Suppose that $G$ is a connected graph with minimum number of vertices such that $(H, f)$ is constructible and $H$ has a strictly $f$-degenerate transversal. If $H$ is a monoblock, then the kernel of $H$ is isomorphic to $G$, but $G$ is not strictly $f$-degenerate, a contradiction. Suppose that $G$ is isomorphic to $K_{p}$, $H$ is isomorphic to $\widetilde{K_{p}}$, $f$ is semi-constant on $H$ and $H$ has a strictly $f$-degenerate transversal $R$. Since $f(v, 1) + f(v, 2) + \dots + f(v, s) = p - 1$ for each $v \in V(G)$ and $|V(G)| = p$, there exists a color class $R_{i}$ such that $|R_{i}| > f(v, i)$ and $H[R_{i}]$ is a complete graph, thus it is not strictly $f$-degenerate, a contradiction. If $G$ is isomorphic to an odd cycle, then the kernel of $H$ is isomorphic to the circular ladder graph with $f = 1$; if $G$ is isomorphic to an even cycle, then the kernel of $H$ is isomorphic to the M\"{o}bius ladder with $f = 1$. It is easy to check that $H$ has no strictly $1$-degenerate transversal in these two situations, a contradiction. 

Now, suppose that a constructible cover $(H, f)$ of $G$ is obtained from a constructible cover $(H^{(1)}, f^{(1)})$ of $G^{(1)}$ and a constructible cover $(H^{(2)}, f^{(2)})$ of $G^{(2)}$ as in \autoref{constructible}. Let $R$ be a strictly $f$-degenerate transversal of $H$ and suppose without loss of generality $(w, 1) \in R$. By the minimality of $G$, $R \cap H^{(1)}$ is not a strictly $f^{(1)}$-degenerate transversal, but $R \cap (H^{(1)} - X_{w})$ is a strictly $f^{(1)}$-degenerate transversal of $H^{(1)} - X_{w}$, thus there exists a subset $R_{1}$ of $R \cap H^{(1)}$ such that $(w, 1) \in R_{1}$ and $\deg_{R_{1}}(x) \geq f^{(1)}(x)$ for each $x \in R_{1}$. Similarly, there exists a subset $R_{2}$ of $R \cap H^{(2)}$ such that $(w, 1) \in R_{2}$ and $\deg_{R_{2}}(x) \geq f^{(2)}(x)$ for each $x \in R_{2}$. Let $R' \coloneqq R_{1} \cup R_{2}$. Note that $\deg_{R'}(x) \geq f(x)$ for each $x \in R'$, this contradicts the fact that $R$ is a strictly $f$-degenerate transversal. 
\end{proof}

\section{Strengthened results}\label{sec:SELF-STRENGTHENING}
Borodin, Kostochka and Toft \cite{MR1743629} showed that \autoref{HARD} is self-strengthening, and gave the following stronger form.
\begin{theorem}[Borodin, Kostochka and Toft \cite{MR1743629}]\label{SELF}
Let $G$ be a connected graph and $f_{1}, f_{2}, \dots, f_{s}$ be functions from $V(G)$ to $\mathbb{Z}^{*}$ with $f_{1}(v) + f_{2}(v) + \dots + f_{s}(v) \geq \deg(v)$ for each $v \in V(G)$. Then $G$ can be partitioned into strictly $f_{i}$-degenerate induced subgraphs $G_{i}$ with $\deg_{G_{i}}(v) \leq f_{i}(v)$ whenever $1 \leq i \leq s$ and $v \in V(G_{i})$ if and only if $G$ is not hard-constructible.  
\end{theorem}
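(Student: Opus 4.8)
The plan is to deduce \autoref{SELF} from a self-strengthened version of \autoref{MR}, transported across the ID-cover correspondence of the Proposition in the introduction; the point of ``self-strengthening'' is that the machinery of \autoref{sec:DTR} already knows how to build an $(f_{1},\dots,f_{\kappa})$-partition, and the very same argument can be made to respect the extra upper bound $\deg_{G_{i}}(v)\le f_{i}(v)$.

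The ``only if'' direction is free: a partition into strictly $f_{i}$-degenerate induced subgraphs $G_{i}$ is, a fortiori, an $(f_{1},f_{2},\dots,f_{\kappa})$-partition, so if such a partition exists then $G$ is $(f_{1},f_{2},\dots,f_{\kappa})$-partitionable, and \autoref{HARD} gives that $G$ is not hard-constructible. For ``if'', assume $G$ is connected, satisfies $(\ast)$, and is not hard-constructible. Form the valued ID-cover $(H,f)$ with $f(v,i)=f_{i}(v)$; then $\sum_{i}f(v,i)=\sum_{i}f_{i}(v)\ge\deg_{G}(v)$. By \autoref{HARD}, $G$ is $(f_{1},\dots,f_{\kappa})$-partitionable; by the Proposition, $H$ has a strictly $f$-degenerate transversal; hence, by \autoref{MR}, the valued cover $(H,f)$ is non-constructible.

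The heart of the matter is then the following strengthening of \autoref{MR}: if $(H,f)$ is a non-constructible valued cover of a connected graph $G$ with $\sum_{i}f(v,i)\ge\deg_{G}(v)$ for all $v$, then $H$ has a strictly $f$-degenerate transversal $R$ with the additional property that $\deg_{H[R]}(x)\le f(x)$ for every $x\in R$. Granting this and applying it to our ID-cover, the resulting transversal $R=\{(v,c(v)):v\in V(G)\}$ satisfies $\deg_{G_{c(v)}}(v)=\deg_{H[R]}\big((v,c(v))\big)\le f\big((v,c(v))\big)=f_{c(v)}(v)$, where $G_{i}:=G[\{v:c(v)=i\}]$; since $G_{i}=H[R_{i}]$ is strictly $f_{i}$-degenerate (as in the proof of the Proposition), $(G_{1},\dots,G_{\kappa})$ is the desired partition. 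To prove the strengthened statement I would re-run the proof of \autoref{MR} on a minimal counterexample $(H,f)$. All the lemmas (\autoref{GE}, \autoref{2-con}, \autoref{NZ-PM}, \autoref{COM}) and the two main cases build the transversal by deleting some $X_{w}$ (or $X_{S}$), recursing to get a transversal $R'$ of the smaller cover, and inserting $(w,q)$ (or $S$) back, so each insertion step must be audited for the new inequality. The neighbours of $(w,q)$ in $R'$ have their degree raised by one, so I need $R'$ to satisfy the strict inequality $\deg_{H[R']}(x)<f(x)$ at those neighbours; this is obtained --- exactly as $f'$ is defined in \autoref{NZ-PM} and \autoref{COM} --- by lowering $f$ by one at the (unique) $X_{u}$-neighbour of $(w,q)$ for each $u$ adjacent to $w$ before recursing, since deleting $w$ lowers the degree of each such neighbour by one and the degree condition survives the trim. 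The inserted vertex itself needs $\deg_{H[R']}\big((w,q)\big)=|N((w,q))\cap R'|\le f(w,q)$, which is available because $\sum_{t}|N((w,t))\cap R'|\le\deg_{G}(w)\le\sum_{t}f(w,t)$, so some $q$ works; and in the remaining regular-cycle case the transversal is an independent set, so the bound is vacuous.

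The main obstacle is the tension between the last two points: bounding $|N((w,q))\cap R'|$ forces us to choose $q$ only after $R'$ is known, while securing strictness at the neighbours of $(w,q)$ forces us to trim $f$ near $(w,q)$ before building $R'$, and the trim depends on $q$. I would dissolve this by carrying, in the inductive hypothesis and in a strengthened form of \autoref{GE}, an auxiliary ``strict at the boundary of a prescribed $X_{w}$'' clause, and then arguing that among the colours $q$ passing the counting bound for the already-constructed $R'$ at least one is compatible with the required trimming --- here the perfect-matching property from \autoref{NZ-PM} keeps the counting sum exactly $\deg_{G}(w)$, which is what makes the choice possible. One also has to verify that the trimmed sub-cover remains non-constructible, so that minimality still applies; this reduces to re-inspecting the finite list of building covers under a $(-1)$-shift along one matching edge per boundary vertex. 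This bookkeeping, rather than any new idea, is the bulk of the work.
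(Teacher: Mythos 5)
Your reduction is sound: the ``only if'' direction via \autoref{HARD} is immediate, and the ``if'' direction correctly isolates the crux as a strengthened form of \autoref{MR} (which is exactly the paper's \autoref{SMR}), transported through the ID-cover correspondence. The gap is in how you propose to prove that strengthening. Re-running the induction of \autoref{MR} with an augmented hypothesis is not carried out, and the circularity you yourself flag --- the colour $q$ can only be selected after $R'$ is known, while the trimming of $f$ that secures strictness at the neighbours of $(w,q)$ must happen before $R'$ is built and depends on $q$ --- is left unresolved; the proposed fix (an auxiliary ``strict at the boundary'' clause plus a compatibility count) is an assertion, not an argument, and you would additionally have to re-verify non-constructibility of every trimmed sub-cover arising in \autoref{NZ-PM}, \autoref{COM} and both main cases. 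As written, the heart of the theorem is missing.

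The missing idea is that no interaction with the inductive construction is needed at all: the degree bound can be imposed by post-processing an arbitrary solution. Among all strictly $f$-degenerate transversals $R$ (one exists by \autoref{MR}, or in the partition language by \autoref{HARD}), choose $R^{*}$ minimizing the deficiency $\mathrm{def}(R)=|E(H[R])|-\sum_{(v,q)\in R}f(v,q)$. If some $(w,p)\in R^{*}$ had $\deg_{R^{*}}(w,p)>f(w,p)$, then, writing $D=R^{*}\setminus\{(w,p)\}$, the inequality $\sum_{q}\deg_{D}(w,q)\le\deg_{G}(w)\le\sum_{q}f(w,q)$ forces some $q$ with $\deg_{D}(w,q)<f(w,q)$; swapping $(w,p)$ for $(w,q)$ keeps the transversal strictly $f$-degenerate and strictly decreases $\mathrm{def}$, a contradiction. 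This is precisely \autoref{SMR}, and the identical exchange applied to an $(f_{1},\dots,f_{\kappa})$-partition minimizing $\sum_{i}|E(G[V_{i}])|-\sum_{i}\sum_{v\in V_{i}}f_{i}(v)$ proves \autoref{SELF} without mentioning covers at all. Your overall architecture survives once this local swap replaces the induction re-run.
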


Similar to \autoref{SELF}, \autoref{MR} is also self-strengthening. Let $R$ be a strictly $f$-degenerate transversal of a valued cover $(H, f)$, we define $\mathrm{def}(R)$ as the following, 
\begin{equation*}
\mathrm{def}(R) \coloneqq |E(H[R])| - \sum_{(v, q) \in R} f(v, q). 
\end{equation*}
\begin{theorem}\label{SMR}
Let $G$ be a connected graph and $(H, f)$ be a valued cover with $f(v, 1) + f(v, 2) + \dots + f(v, s) \geq \deg_{G}(v)$ for each $v \in V(G)$. Then $H$ has a strictly $f$-degenerate transversal $R$ such that $\deg_{R}(v, q) \leq f(v, q)$ for each $v \in V(G)$ and $q \in [s]$, if and only if $H$ is non-constructible. 
\end{theorem}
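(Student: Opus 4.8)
The core observation is that Theorem~\ref{SMR} is stronger than Theorem~\ref{MR} in one direction only, and is in fact \emph{equivalent} to it once we unwind the definitions; this is exactly the ``self-strengthening'' phenomenon. The ``only if'' direction (if $H$ is constructible then no strictly $f$-degenerate transversal $R$ with $\deg_R(v,q) \le f(v,q)$ exists) is immediate from Theorem~\ref{MR}, since such an $R$ is in particular a strictly $f$-degenerate transversal, and Theorem~\ref{MR} already forbids any strictly $f$-degenerate transversal when $H$ is constructible. So all the work is in the ``if'' direction: assuming $H$ is non-constructible, produce a transversal $R$ that is not merely strictly $f$-degenerate but also satisfies the local degree bound $\deg_R(v,q)\le f(v,q)$ everywhere.

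The plan is to argue by minimality on $|V(G)|$ together with a secondary optimization. By Theorem~\ref{MR}, $H$ has \emph{some} strictly $f$-degenerate transversal; among all such transversals choose one, call it $R$, that minimizes $\mathrm{def}(R) = |E(H[R])| - \sum_{(v,q)\in R} f(v,q)$ (equivalently, among transversals with $H[R]$ strictly $f$-degenerate, minimize the total ``excess degree''). First I would record the easy inequality $\mathrm{def}(R) \le -1$ for \emph{any} strictly $f$-degenerate $R$: indeed, an $f$-removing order $x_1,\dots,x_n$ of $H[R]$ gives $|E(H[R])| = \sum_i (\text{back-neighbors of } x_i) \le \sum_i (f(x_i) - 1) = \sum_{(v,q)\in R} f(v,q) - n$, so $\mathrm{def}(R) \le -n < 0$. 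The goal is to show the chosen $R$ actually has $\deg_R(v,q) \le f(v,q)$ for every $(v,q) \in R$. Suppose not: some $(v_0,q_0) \in R$ has $\deg_R(v_0,q_0) \ge f(v_0,q_0)+1$. I would then try to perform a local exchange: look at $X_{v_0}$ and attempt to replace $(v_0,q_0)$ by a different vertex $(v_0,q_0')$ of $X_{v_0}$, chosen so that the new transversal $R'$ is still strictly $f$-degenerate but has strictly smaller $\mathrm{def}$, contradicting the choice of $R$. The candidate $(v_0,q_0')$ should have few neighbors in $R \setminus X_{v_0}$; since $\sum_q f(v_0,q) \ge \deg_G(v_0) \ge \deg_{H[R]}(v_0,q_0) + (\text{something})$, a counting argument over $X_{v_0}$ should furnish such a $q_0'$, mirroring the argument in Lemma~\ref{GE} and in the subcase analyses of Theorem~\ref{MR}'s proof.

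The subtle point — and what I expect to be the main obstacle — is that a single local swap at one vertex need not decrease $\mathrm{def}$, because removing $(v_0,q_0)$ changes $H[R]$ globally and the strict $f$-degeneracy of the new set $R'$ must be re-verified. The clean way around this is to reduce to Theorem~\ref{MR} applied to a \emph{modified} valued cover: delete $X_{v_0}$ from $H$ and, for each neighbor $(w,t)$ of the offending vertex, decrease $f(w,t)$ by $1$ (flooring at $0$), exactly as in the proofs of Lemmas~\ref{NZ-PM} and \ref{COM}. One checks the degree condition still holds on $G - v_0$ (each component of $G-v_0$ is connected, and the total at each neighbor drops by at most the number of edges to $v_0$, which is compensated by the lost edge of $G$), and that the resulting cover is non-constructible — here one uses that $H$ itself is non-constructible and that constructibility of the pieces would, via Definition~\ref{constructible}, force constructibility of $H$. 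Then the minimality hypothesis yields a strictly $f'$-degenerate transversal of the smaller cover satisfying the degree bounds, and adding back a carefully chosen vertex of $X_{v_0}$ (one whose back-neighbor count is within its $f$-value, which exists precisely because $\sum_q f(v_0,q) \ge \deg_G(v_0)$) reconstitutes the desired $R$ for $H$. The bookkeeping of which vertex of $X_{v_0}$ to reinsert, and verifying that the degree bound is preserved both at $v_0$ and at its neighbors after reinsertion, is the delicate part; everything else parallels the structure already laid down in the proof of Theorem~\ref{MR}.
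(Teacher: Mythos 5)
Your first plan---minimize $\mathrm{def}(R)$ over all strictly $f$-degenerate transversals (which exist by \autoref{MR} since $H$ is non-constructible) and repair a violating vertex by a swap inside its fiber---is exactly the paper's proof, and the ``obstacle'' you then raise against it is not real. Let $R^{*}$ be a minimizer and suppose $\deg_{R^{*}}(w,p) > f(w,p)$ for some $(w,p)\in R^{*}$; set $D = R^{*}\setminus\{(w,p)\}$. Each edge $wu$ of $G$ contributes at most one edge from the single vertex of $D$ in $X_{u}$ into $X_{w}$, so $\sum_{q}\deg_{D}(w,q)\leq \deg_{G}(w)\leq \sum_{q}f(w,q)$; together with $\deg_{D}(w,p)=\deg_{R^{*}}(w,p)>f(w,p)$ this forces some $q$ with $\deg_{D}(w,q)<f(w,q)$. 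The swap $R'=D\cup\{(w,q)\}$ needs no global re-verification: strict $f$-degeneracy is hereditary, so $H[D]$ inherits it from $H[R^{*}]$, and every nonempty subgraph of $H[R']$ either avoids $(w,q)$ (hence lies in $H[D]$) or contains $(w,q)$, whose degree in all of $H[R']$ is already $\deg_{D}(w,q)<f(w,q)$. Finally $\mathrm{def}(R')-\mathrm{def}(R^{*})=\bigl(\deg_{D}(w,q)-f(w,q)\bigr)-\bigl(\deg_{D}(w,p)-f(w,p)\bigr)<0$, contradicting minimality. No induction on $|V(G)|$ is needed anywhere.

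The alternative you retreat to contains a genuine gap. To apply a minimality hypothesis to the reduced cover obtained by deleting $X_{v_{0}}$ and lowering $f$ at the neighbours of a chosen lift of $v_{0}$, you must know that the reduced cover is non-constructible, and you justify this by claiming that constructibility of the pieces would, via \autoref{constructible}, force constructibility of $H$. But that definition only manufactures constructible covers by gluing two constructible covers at a cut vertex; it says nothing about re-attaching a deleted fiber, so constructibility of the reduced cover does not propagate to $(H,f)$. In the paper's own reductions of this shape (e.g.\ \autoref{GE} and the lemmas inside the proof of \autoref{MR}), this issue is sidestepped either because a strict degree surplus is created at some vertex or by an ad hoc computation specific to the configuration at hand---not by the implication you invoke. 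Since your first plan already closes the argument, the second one should simply be dropped.
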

\begin{proof}
By \autoref{MR}, it suffices to prove that, if $H$ has a strictly $f$-degenerate transversal, then it has a strictly $f$-degenerate transversal $R$ such that $\deg_{R}(v, q) \leq f(v, q)$ for each $v \in V(G)$ and $q \in [s]$. Let 
\begin{equation*}
\mathrm{def}(R^{*}) = \min_{R \in \mathcal{R}} \big\{\mathrm{def}(R) \big\}, 
\end{equation*}
where $\mathcal{R}$ is the set of all the strictly $f$-degenerate transversals of $H$. Next, we show that $R^{*}$ is the desired set. Suppose that there exists a vertex $(w, p) \in R^{*}$ with $\deg_{R^{*}}(w, p) > f(w, p)$. Let $D$ be obtained from $R^{*}$ by deleting $(w, p)$. Note that 
\begin{equation*}
\deg_{D}(w, 1) + \deg_{D}(w, 2) + \dots + \deg_{D}(w, s) \leq \deg_{G}(w) \leq f(w, 1) + f(w, 2) + \dots + f(w, s), 
\end{equation*}
thus there exists a vertex $(w, q)$ with $\deg_{D}(w, q) < f(w, q)$. Let $R'$ be obtained from $R^{*}$ by replacing $(w, p)$ with $(w, q)$. It is easy to see that $R' \in \mathcal{R}$. But $\mathrm{def}(R') = \mathrm{def}(R^{*}) - \deg_{D}(w, p) + f(w, p) + \deg_{D}(w, q)  - f(w, q) < \mathrm{def}(R^{*})$, which contradicts the minimality of $\mathrm{def}(R^{*})$. 
\end{proof}

When $f(v, 1) + f(v, 2) + \dots + f(v, s) > \deg_{G}(v)$ for each $v \in V(G)$, we have a more specified strictly $f$-degenerate transversal in a particularly strong sense. 
\begin{theorem}\label{MSMR}
Let $G$ be a connected graph and $(H, f)$ be a valued cover of $G$. If $f(v, 1) + f(v, 2) + \dots + f(v, s) > \deg_{G}(v)$ for each $v \in V(G)$, then $H$ has a strictly $f$-degenerate transversal $R$ such that $\deg_{R}(v, q) < f(v, q)$ for each $v \in V(G)$ and $q \in [s]$. 
\end{theorem}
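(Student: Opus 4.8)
The plan is to reuse the deficiency argument from the proof of \autoref{SMR}, where the strict hypothesis $f(v,1)+f(v,2)+\dots+f(v,\kappa) > \deg_{G}(v)$ will be exactly what upgrades the conclusion from ``$\le$'' to ``$<$''. First I would observe that the family $\mathcal{R}$ of all strictly $f$-degenerate transversals of $H$ is nonempty: since the strict inequality holds at every vertex, the hypotheses of \autoref{GE} are satisfied (any vertex may serve as the distinguished vertex), so $\mathcal{R}\neq\emptyset$. As in \autoref{SMR}, put $\mathrm{def}(R)=|E(H[R])|-\sum_{(v,q)\in R}f(v,q)$ for $R\in\mathcal{R}$, and choose $R^{*}\in\mathcal{R}$ minimizing $\mathrm{def}(R^{*})$; this is legitimate because $\mathcal{R}$ is finite. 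The claim to prove is that this $R^{*}$ already satisfies $\deg_{R^{*}}(v,q)<f(v,q)$ for every $(v,q)\in R^{*}$.

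Next I would argue by contradiction. Suppose $(w,p)\in R^{*}$ has $\deg_{R^{*}}(w,p)\ge f(w,p)$, and set $D=R^{*}\setminus\{(w,p)\}$. For each neighbour $u$ of $w$ the set $D\cap X_{u}$ is a single vertex, and $\mathscr{M}_{uw}$ is a matching, so there is at most one edge of $H$ joining $X_{w}$ to $D\cap X_{u}$; summing over the neighbours of $w$ yields $\deg_{D}(w,1)+\deg_{D}(w,2)+\dots+\deg_{D}(w,\kappa)\le\deg_{G}(w)<f(w,1)+f(w,2)+\dots+f(w,\kappa)$. Hence there is an index $q$ with $\deg_{D}(w,q)<f(w,q)$ (so in particular $f(w,q)\ge 1$). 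Let $R'=D\cup\{(w,q)\}$, which is again a transversal of $H$. It is strictly $f$-degenerate: $H[D]$ is an induced subgraph of the strictly $f$-degenerate graph $H[R^{*}]$, so it has an $f$-removing order, and placing $(w,q)$ at the front of that order yields an $f$-removing order of $H[R']$, since $(w,q)$ has exactly $\deg_{D}(w,q)<f(w,q)$ neighbours to its right and no other vertex acquires a new right-neighbour. Thus $R'\in\mathcal{R}$.

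Finally I would compare the two deficiencies: writing $e=|E(H[D])|$ and $s=\sum_{x\in D}f(x)$, one has $\mathrm{def}(R^{*})=e+\deg_{R^{*}}(w,p)-s-f(w,p)$ and $\mathrm{def}(R')=e+\deg_{D}(w,q)-s-f(w,q)$, whence
\[
\mathrm{def}(R')-\mathrm{def}(R^{*})=\bigl(\deg_{D}(w,q)-f(w,q)\bigr)-\bigl(\deg_{R^{*}}(w,p)-f(w,p)\bigr)\le(-1)-0<0,
\]
since $\deg_{D}(w,q)\le f(w,q)-1$ and $\deg_{R^{*}}(w,p)\ge f(w,p)$. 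This contradicts the minimality of $\mathrm{def}(R^{*})$ and finishes the proof. I do not expect a genuine obstacle here; the two points that need a little care are verifying that $R'$ is strictly $f$-degenerate (the prepend trick) and the edge count $\sum_{q}\deg_{D}(w,q)\le\deg_{G}(w)$. The role of the strict hypothesis is exactly the displayed line: it makes the ``loss'' term $\deg_{D}(w,q)-f(w,q)$ at most $-1$, so the deficiency strictly decreases even though the assumed violation only yields $\deg_{R^{*}}(w,p)-f(w,p)\ge 0$, rather than the $\ge 1$ available in \autoref{SMR}.
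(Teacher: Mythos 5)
Your proposal is correct and follows essentially the same route as the paper: establish nonemptiness of $\mathcal{R}$ via \autoref{GE}, minimize the same deficiency function $\mathrm{def}(R)$, and derive a contradiction by swapping a violating vertex $(w,p)$ for some $(w,q)$ with $\deg_{D}(w,q)<f(w,q)$, which the strict degree hypothesis guarantees exists and which strictly lowers the deficiency. Your added verification that $R'$ remains strictly $f$-degenerate (prepending $(w,q)$ to an $f$-removing order of $H[D]$) is a detail the paper leaves implicit, but the argument is the same.
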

\begin{proof}
By \autoref{GE}, $H$ has a strictly $f$-degenerate transversal. It suffices to prove that $H$ has a strictly $f$-degenerate transversal $R$ such that $\deg_{R}(v, q) < f(v, q)$ for each $v \in V(G)$ and $q \in [s]$. Let 
\begin{equation*}
\mathrm{def}(R^{*}) = \min_{R \in \mathcal{R}} \big\{\mathrm{def}(R) \big\}, 
\end{equation*}
where $\mathcal{R}$ is the set of all the strictly $f$-degenerate transversal of $H$. Next, we show that $R^{*}$ is the desired set. Suppose that there exists a vertex $(w, p) \in R^{*}$ with $\deg_{R^{*}}(w, p) \geq f(w, p)$. Let $D$ be obtained from $R^{*}$ by deleting $(w, p)$. Note that 
\begin{equation*}
\deg_{D}(w, 1) + \deg_{D}(w, 2) + \dots + \deg_{D}(w, s) \leq \deg_{G}(w) < f(w, 1) + f(w, 2) + \dots + f(w, s), 
\end{equation*}
thus there exists a vertex $(w, q)$ with $\deg_{D}(w, q) < f(w, q)$. Let $R'$ be obtained from $R^{*}$ by replacing $(w, p)$ with $(w, q)$. It is easy to see that $R' \in \mathcal{R}$. But $\mathrm{def}(R') = \mathrm{def}(R^{*}) - \deg_{D}(w, p) + f(w, p) + \deg_{D}(w, q)  - f(w, q) < \mathrm{def}(R^{*})$, which contradicts the minimality of $\mathrm{def}(R^{*})$. 
\end{proof}

\section{Gallai type result}\label{sec:MINIMAL}
Critical graphs in various coloring problems have been extensively studied, in this section we give some structural results on critical graphs with respect to strictly $f$-degenerate transversals. 

A graph $G$ is \textbf{$k$-critical} if $\chi(G) = k$ but $\chi(G - v) \leq k - 1$ for all $v \in V(G)$. A \textbf{Gallai tree} is a connected graph in which every block is either a complete graph or an odd cycle. The following structural result was originally proved by Gallai. 
\begin{theorem}[Gallai \cite{MR0188099}]\label{Critical-1}
Let $k \geq 3$ and let $G$ be a $(k + 1)$-critical graph. Let 
\begin{equation*}
D \coloneqq \{v \in V(G) \mid \deg_{G}(v) = k\}.
\end{equation*}
Then each component of $G[D]$ is a Gallai tree. 
\end{theorem}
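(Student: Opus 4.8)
The plan is to deduce this Gallai-type result from the structural machinery already developed for strictly $f$-degenerate transversals, specifically from \autoref{MR} applied to a suitable ID-cover. First I would set up the reduction: let $G$ be $(k+1)$-critical with $k \geq 3$, put $D = \{v : \deg_G(v) = k\}$, and let $C$ be a component of $G[D]$. Fix a proper $k$-coloring $\phi$ of $G - V(C)$ (which exists since $G$ is $(k+1)$-critical, so $G - V(C)$ is $k$-colorable). For each $v \in V(C)$, let $L(v) \subseteq [k]$ be the set of colors in $[k]$ not used by $\phi$ on neighbors of $v$ outside $C$; since $v$ has exactly $k$ neighbors total and $\deg_C(v) \geq 1$, we get $|L(v)| \geq \deg_C(v) \geq 1$, and in fact $|L(v)| \geq k - (\deg_G(v) - \deg_C(v)) = \deg_C(v)$. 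If $C$ had a proper $L$-coloring, we could extend $\phi$ to a proper $k$-coloring of $G$, contradicting $\chi(G) = k+1$. So $C$ is not $L$-colorable with $|L(v)| \geq \deg_C(v)$ for all $v \in V(C)$.

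Next I would translate non-$L$-colorability into the cover language. Take the ID-cover $(H, f)$ of $C$ with $\kappa = k$, where $f(v, i) = 1$ if $i \in L(v)$ and $f(v, i) = 0$ otherwise; then $\sum_i f(v, i) = |L(v)| \geq \deg_C(v)$, so the hypothesis of \autoref{MR} is met. By the Proposition relating ID-covers to partitions (with all $f_i \in \{0,1\}$, i.e.\ list coloring), $H$ has a strictly $f$-degenerate transversal if and only if $C$ has a proper $L$-coloring; since the latter fails, $H$ has no strictly $f$-degenerate transversal, so by \autoref{MR} the cover $(H, f)$ is constructible. Now I would unwind what ``constructible'' forces here. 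By \autoref{constructible}, a constructible cover is built from building covers by identifying vertices along cut vertices; this block-decomposition of $(H, f)$ projects to the block-decomposition of $C$, so every block $B$ of $C$ carries a building cover. Examining the four types of building cover for an ID-cover: type (i) (monoblock, kernel $\cong B$) is impossible for a nontrivial ID-cover because in an ID-cover any single $X_v$ is independent and deleting the $f = 0$ vertices from an ID-cover of $B$ cannot yield a graph isomorphic to $B$ unless $B = K_1$ — I need to check the small cases carefully, but essentially $K_1$ and $K_2$ are the only degenerate possibilities and $K_2$ is ruled out since its kernel would be a single edge only if $|L(v)| = 1$ for both endpoints, which is fine — actually the monoblock case for $K_2$ gives a block that is $K_2$, a complete graph, so that is consistent. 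The remaining building-cover types force $B$ to be a complete graph $K_p$ (type (ii)) or a cycle; type (iv) requires $B$ to be an even cycle whose kernel is a Möbius ladder with $f \equiv 1$, but the Möbius ladder is not an ID-cover of an even cycle (an ID-cover of $C_n$ has kernel equal to two disjoint copies of $C_n$, the circular ladder's complement structure), so even cycles are excluded; type (iii) allows odd cycles. Hence every block of $C$ is either a complete graph or an odd cycle, i.e.\ $C$ is a Gallai tree.

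The main obstacle I expect is exactly the last step: verifying that, for an \emph{ID}-cover, the building-cover conditions (ii)--(iv) collapse precisely to ``$B$ is a complete graph or an odd cycle'' and rule out even cycles and the monoblock case (beyond trivial blocks). This requires knowing the explicit structure of an ID-cover of $K_p$ and of $C_n$ — namely that the kernel of an ID-cover of $K_p$ with $f \equiv 1$ is $\widetilde{K_p}$, matching type (ii), and that the kernel of an ID-cover of $C_n$ is a disjoint union of cycles, which equals a circular ladder only when $n$ is a single $n$-cycle viewed correctly and never a Möbius ladder — so even cycles cannot give building covers and odd cycles can. A secondary point to handle carefully is that the constructibility recursion identifies vertices $(w_1, q)$ with $(w_2, q)$ \emph{with the same index} $q$, which is automatically compatible with the ID-cover structure (ID-covers only connect same-indexed copies), so the block covers glue back to an ID-cover of $C$ and each block's cover is itself an ID-cover — this compatibility is what lets me apply the per-block analysis. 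Once these structural facts are pinned down, the conclusion that each component of $G[D]$ is a Gallai tree is immediate.
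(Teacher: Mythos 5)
The paper does not actually prove this statement --- it is quoted from Gallai as known background for Section~4 --- so there is no in-paper argument to measure you against. Your derivation is the natural way to recover Gallai's theorem from the machinery developed here, and it is correct in outline: fix a component $C$ of $G[D]$, properly $k$-color $G - V(C)$ using criticality, pass to the residual lists, which satisfy $|L(v)| \geq k - (\deg_G(v) - \deg_C(v)) = \deg_C(v)$ because every vertex of $C$ has degree exactly $k$ and all its $D$-neighbors lie in $C$; conclude from $\chi(G) = k+1$ that $C$ is not $L$-colorable, encode this as a $\{0,1\}$-valued ID-cover with $\sum_q f(v,q) = |L(v)| \geq \deg_C(v)$, and invoke \autoref{MR} to get that the cover is constructible. (The single-vertex component should be set aside at the start, since $K_1$ is trivially a Gallai tree and the list there may be empty.)

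Two points in the unwinding of constructibility need to be made precise. First, the atoms of the decomposition in \autoref{constructible} are base graphs of building covers glued at cut vertices, and a monoblock atom may a priori be an \emph{arbitrary} connected graph, not a single block of $C$; so ``every block of $C$ carries a building cover'' is not immediate. The clean fix, which replaces your vague ``only $K_1$ and $K_2$ are possible,'' is a vertex count: for a $\{0,1\}$-valued cover the kernel of a monoblock over $B$ has exactly $\sum_{v} \deg_B(v) = 2|E(B)|$ vertices, so kernel $\cong B$ forces $2|E(B)| = |V(B)|$, hence $B = K_2$. This also disposes of all blocks that are neither complete nor cycles, since for those only the monoblock type is available. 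Second, the exclusion of even cycles should be argued as follows: in an ID-cover every component of the kernel lies inside a single color class and is therefore a subgraph of $B$ on at most $|V(B)|$ vertices, whereas the M\"{o}bius ladder over $C_n$ is connected on $2n$ vertices; so type (iv) never arises for ID-covers, while type (iii) (two disjoint copies of the odd cycle, i.e.\ identical $2$-lists) does. With these two repairs every atom is a single block isomorphic to a complete graph or an odd cycle, and $C$ is a Gallai tree. One could alternatively quote \autoref{B} (Borodin's characterization of degree-choosable graphs) at the list-coloring stage, which is the classical route; your version via \autoref{MR} is the same proof carried out one level of generality higher.
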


A graph $G$ is \textbf{$L$-critical} if $G$ does not have an $L$-coloring, but every proper subgraph does. For $L$-critical graphs, Kostochka \etal gave the following more general result. 

\begin{theorem}[Kostochka \etal \cite{MR1425799}]\label{Critical-2}
Let $k \geq 3$ and let $G$ be a graph. Suppose that $L$ is a list $k$-assignment for $G$ such that $G$ is $L$-critical, and 
\begin{equation*}
D \coloneqq\{v \in V(G) \mid \deg_{G}(v) = k\}. 
\end{equation*} 
Then each component of $G[D]$ is a Gallai tree. 
\end{theorem}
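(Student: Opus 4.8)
The plan is a proof by contradiction. Suppose some component $C$ of $G[D]$ is not a Gallai tree; I will manufacture an $L$-coloring of $G$, contradicting that $G$ is $L$-critical. (Note that $G$ is connected, since otherwise one of its components would be a non-$L$-colorable proper subgraph.) First I would peel off everything outside $C$: since $V(C)\neq\emptyset$, the graph $G-V(C)$ is a proper subgraph of $G$ and hence has an $L$-coloring $\phi$ (vacuously so if $V(C)=V(G)$). For each $v\in V(C)$ set
\[
L'(v)=L(v)\setminus\{\phi(u):u\in N_G(v)\setminus V(C)\}.
\]
Because $v\in D$ has $\deg_G(v)=k$, it has exactly $k-\deg_C(v)$ neighbors outside $C$, so together with $|L(v)|\geq k$ this gives $|L'(v)|\geq k-(k-\deg_C(v))=\deg_C(v)$ for every $v\in V(C)$.

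The heart of the argument is to color $C$ from the residual lists. Encode $(C,L')$ as the ID-cover $(H,f)$ of $C$ with $f(v,c)=1$ if $c\in L'(v)$ and $f(v,c)=0$ otherwise; then $\sum_{c}f(v,c)=|L'(v)|\geq\deg_C(v)$, and a strictly $f$-degenerate transversal of $H$ is precisely a proper $L'$-coloring of $C$ (a transversal can use no vertex of $f$-value $0$, and on the remaining vertices, all of $f$-value $1$, strict $f$-degeneracy just means inducing an edgeless subgraph, i.e.\ properness). By \autoref{MR}, such a transversal exists once $(H,f)$ is shown to be non-constructible, and I claim it is. A constructible ID-cover is assembled from building covers by identifying single vertices, and among the building covers the only ones that happen to be ID-covers have base graph a complete graph (a $K_2$-monoblock, or a $\widetilde{K_p}$ over $K_p$ with $f$ semi-constant) or an odd cycle (the circular-ladder cover with $f\equiv 1$); the even-cycle building cover is the M\"obius ladder, which is not an ID-cover because its matchings compose around the cycle to a non-identity permutation. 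Hence the underlying graph of any constructible ID-cover has every block a complete graph or an odd cycle, i.e.\ it is a Gallai tree; since $C$ is not a Gallai tree, $(H,f)$ is non-constructible, and therefore $C$ is $L'$-colorable.

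Finally I would splice: combine $\phi$ on $V(G)\setminus V(C)$ with an $L'$-coloring of $C$. Edges lying inside $V(G)\setminus V(C)$ or inside $V(C)$ are properly colored by construction, and for a crossing edge $uv$ with $u\in V(C)$ and $v\notin V(C)$ the color of $u$ lies in $L'(u)$, which by definition omits $\phi(v)$; so $G$ is $L$-colorable, a contradiction, and every component of $G[D]$ is a Gallai tree. The step I expect to be the genuine obstacle is the non-constructibility claim, namely pinning down exactly which building covers can arise as ID-covers: this amounts to checking that a genuine list assignment never produces the twist present in the M\"obius-ladder cover, so that any block which is neither a complete graph nor an odd cycle is excluded from the base of a constructible ID-cover. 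Everything else — the degree count and the splicing compatibility check — is routine. One could alternatively route the coloring of $C$ through Borodin's characterization of degree-choosable graphs (\autoref{B}) together with the classical fact that an $R$-constructible connected graph is a Gallai tree, but carrying it out via \autoref{MR} keeps the whole argument inside the framework developed here.
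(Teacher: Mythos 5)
The paper does not actually prove this theorem---it is quoted from Kostochka \etal as motivation for \autoref{sec:MINIMAL}---so there is no internal proof to compare against; what you have done is derive the classical statement from \autoref{MR}, very much in the spirit of \autoref{sec:applications}, and your derivation is sound. The reduction is all correct: coloring the proper subgraph $G-V(C)$ first, passing to residual lists with $|L'(v)|\geq k-(k-\deg_C(v))=\deg_C(v)$, encoding $(C,L')$ as a $\{0,1\}$-valued ID-cover so that a strictly $f$-degenerate transversal is exactly a proper $L'$-coloring, and splicing at the end. The one step you assert rather than prove is the classification of building covers that are ID-covers with $f\in\{0,1\}$: you give the reason only for the M\"{o}bius-ladder case. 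It does close, by the count you gesture at: for a $\{0,1\}$-valued cover of $B$ with $f(v,1)+\dots+f(v,\kappa)=\deg_B(v)$, the kernel has $\sum_v\deg_B(v)=2|E(B)|$ vertices, so ``kernel $\cong B$'' forces $2|E(B)|=|V(B)|$ and hence $B=K_2$ for connected $B$ with an edge; and since the kernel of an ID-cover is a disjoint union, over colors, of induced subgraphs of $B$, the $\widetilde{K_p}$ and circular-ladder cases force all residual lists equal (the classical obstructions on a complete-graph or odd-cycle block), while the connected $2n$-vertex M\"{o}bius ladder cannot arise at all. You should also record, as you implicitly assume, that the pieces $H^{(i)}$ in \autoref{constructible} are induced sub-covers of $H$ and thus themselves $\{0,1\}$-valued ID-covers, so the classification applies to every building block of the decomposition and every block of the base graph of a constructible such cover is a complete graph or an odd cycle. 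Incidentally, your argument nowhere uses $k\geq 3$, so you prove slightly more than is stated.
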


A graph $G$ is \textbf{DP-$(k + 1)$-critical} if $\chi_{\mathrm{DP}}(G) = k + 1$ and $\chi_{\mathrm{DP}}(G - v) = k$ for all $v \in V(G)$. 

\begin{theorem}[Bernshteyn and Kostochka \cite{MR3686937}]\label{Critical-3}
Let $k \geq 3$ and let $G$ be a graph. Suppose that $G$ is a DP-$(k + 1)$-critical graph, and 
\begin{equation*}
D \coloneqq\{v \in V(G) \mid \deg_{G}(v) = k\}. 
\end{equation*} 
Then each component of $G[D]$ is a GDP-tree. 
\end{theorem}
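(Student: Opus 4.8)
The plan is to carry Gallai's classical deletion-and-extension argument over to the cover setting, with \autoref{MR} playing the role that Borodin's characterization of degree-choosable graphs plays in the original. Since $\chi_{\mathrm{DP}}(G) = k+1 > k$, fix a valued cover $(H_{0}, f_{0})$ with $f_{0} : V(H_{0}) \to \{0,1\}$, $\sum_{q} f_{0}(v,q) \ge k$ for all $v$, and no DP-coloring. Trim it: discard every $(v,q)$ with $f_{0}(v,q) = 0$ and keep exactly $k$ of the surviving vertices in each part; the result is a cover $H$ of $G$ with $\kappa = k$ that still has no independent transversal, because any such transversal would use only vertices on which $f_{0} = 1$ and hence would be a DP-coloring of $(H_{0}, f_{0})$. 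On the other side, vertex-criticality gives $\chi_{\mathrm{DP}}(G - v) = k$ for every $v$, and since $\chi_{\mathrm{DP}}$ is monotone under subgraphs, $\chi_{\mathrm{DP}}(G') \le k$ for every proper subgraph $G'$; equivalently, $H_{V(G)\setminus S}$ has an independent transversal for every nonempty $S \subseteq V(G)$.

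Fix a component $B$ of $G[D]$. Single-vertex components are trivially GDP-trees, so assume $|V(B)| \ge 2$, whence $\deg_{B}(v) \ge 1$ for each $v \in B$. If $B = V(G)$, then $G$ is $k$-regular, so $(H, f)$ with $f \equiv 1$ meets the hypothesis of \autoref{MR} with equality; as $H$ has no independent transversal, $(H, f)$ is constructible, and the structural reading of \autoref{constructible} described below shows that $G = B$ is a GDP-tree. Now assume $B \neq V(G)$, delete every part $X_{v}$ with $v \in B$, and let $R'$ be an independent transversal of $H_{V(G)\setminus B}$. For each $v \in B$ put $L_{v} = \{\, q \in [k] : (v,q) \text{ has no neighbor in } R' \,\}$ and define $f' : V(H_{B}) \to \{0,1\}$ by $f'(v,q) = 1$ if and only if $q \in L_{v}$. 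Because $v \in D$ has exactly $\deg_{B}(v)$ neighbors inside $B$ and $k - \deg_{B}(v)$ neighbors outside $B$, and each outside neighbor kills at most one vertex of $X_{v}$ through its matching, we get $\sum_{q} f'(v,q) = |L_{v}| \ge \deg_{B}(v)$; hence $(H_{B}, f')$ satisfies the hypothesis of \autoref{MR} for the connected graph $B$.

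If $(H_{B}, f')$ were non-constructible, \autoref{MR} would supply a strictly $f'$-degenerate transversal $R''$ of $H_{B}$, which (as $f'$ is $\{0,1\}$-valued) is just an independent transversal of $H_{B}$ avoiding every killed vertex, so $R' \cup R''$ would be an independent transversal of $H$, contradicting the choice of $H$. Hence $(H_{B}, f')$ is constructible, and it remains to check that a constructible valued cover with a $\{0,1\}$-valued weighting sits over a GDP-tree. By \autoref{FC}, $\sum_{q} f'(v,q) = \deg_{B}(v)$; decomposing at a cut vertex preserves $\{0,1\}$-valuedness, since at the identified vertex the two contributing values are nonnegative integers summing to at most $1$; and a monoblock with a $\{0,1\}$-valued weighting must sit over a base of maximum degree at most $1$, because a monoblock forces the unique positive value of the weighting on each part to equal the base-degree of that vertex. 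Consequently every building cover appearing in the decomposition lies over a complete graph or a cycle, so every block of $B$ is a complete graph or a cycle, i.e. $B$ is a GDP-tree. (Alternatively one may quote \autoref{DL} directly: were $B$ not a GDP-tree it would be DP-degree-colorable, which at once produces $R''$.) The counting bound $|L_{v}| \ge \deg_{B}(v)$ and the reduction to a cover with no independent transversal are routine; the step I expect to cost the most work is this last structural reading of \autoref{constructible}, since it requires following the four kinds of building cover and the degenerate single-vertex pieces through the recursion.
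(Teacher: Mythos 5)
Your argument is correct. Note first that the paper does not prove this statement at all: it is quoted from Bernshteyn and Kostochka as background, so there is no in-paper proof to compare against. The closest analogue is the unnamed Gallai-type theorem at the end of Section~4, whose proof uses exactly your template (color the complement of the piece under study, record which cover vertices are killed, restrict $f$ accordingly, and invoke \autoref{MR} on the remaining connected piece), so your route is very much the one the paper itself would take. Your reduction from DP-$(k+1)$-criticality to a single $\{0,1\}$-valued cover with no independent transversal, the count $|L_{v}| \geq \deg_{B}(v)$, and the gluing of $R'$ with $R''$ are all sound. The only step that genuinely needs care is the one you flag: translating ``$(H_{B}, f')$ is constructible with $f'$ taking values in $\{0,1\}$'' into ``$B$ is a GDP-tree.'' Your handling is right: the identification rule $f = f^{(1)} + f^{(2)}$ in \autoref{constructible} keeps both pieces $\{0,1\}$-valued, building covers of types (ii)--(iv) already sit over complete graphs and cycles, and a $\{0,1\}$-valued monoblock forces $B = K_{2}$ --- even under the literal reading of the kernel condition, since by \autoref{FC} the kernel then has $\sum_{v}\deg_{B}(v) = 2|E(B)|$ vertices, and isomorphism with $B$ forces $2|E(B)| = |V(B)|$, i.e.\ a connected graph of average degree one. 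As you note, quoting \autoref{DL} short-circuits this structural analysis entirely and is presumably how the cited authors argue; either way the proof closes.
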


Let $G$ be a graph and $(H, f)$ be a valued cover of $G$. The pair $(H, f)$ is \textbf{minimal non-strictly $f$-degenerate} if $H$ has no strictly $f$-degenerate transversal, but $(H - X_{v}, g)$ has a strictly $g$-degenerate transversal for each $v \in V(G)$, where $g$ is the restriction of $f$ on $V(H) - X_{v}$. We will give some structural results on minimal non-strictly $f$-degenerate pair $(H, f)$. 

The following result is about the minimum degree of the critical graphs. 
\begin{theorem}\label{L}
Let $G$ be a graph and $(H, f)$ be a valued cover of $G$. If $(H, f)$ is a minimal non-strictly $f$-degenerate pair, then $G$ is connected and $f(v, 1) + f(v, 2) + \dots + f(v, s) \leq \deg_{G}(v)$ for each $v \in V(G)$. 
\end{theorem}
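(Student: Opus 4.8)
The plan is to prove the two assertions separately. For connectedness, suppose $G$ is disconnected with a component $G'$. By minimality, every proper induced cover $(H - X_v, f)$ has a strictly $f$-degenerate transversal; in particular, taking $v$ outside $G'$, the restriction to $H_{V(G')}$ has a strictly $f$-degenerate transversal, and taking $v$ inside $G'$, the restriction to the union of the other components has one too. Since there are no edges between $X_u$ and $X_{u'}$ when $u,u'$ lie in different components, the union of these two transversals is a strictly $f$-degenerate transversal of all of $H$, contradicting that $H$ has none. Hence $G$ is connected.

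For the degree inequality, I argue by contradiction: suppose there is a vertex $w \in V(G)$ with $f(w,1) + f(w,2) + \dots + f(w,\kappa) > \deg_G(w)$. Since $(H,f)$ is minimal non-strictly $f$-degenerate, $(H - X_w, f)$ has a strictly $f$-degenerate transversal $R$. This $R$ is a transversal of $H$ restricted to $V(G) \setminus \{w\}$, so to extend it to a transversal of $H$ I just need to pick one vertex of $X_w$. Each vertex $(w,q) \in X_w$ has at most $\deg_G(w)$ neighbors in $R$ in total across the $\kappa$ copies, i.e. $\sum_{q} \deg_R(w,q) \le \deg_G(w) < \sum_q f(w,q)$, so there exists some $q$ with $\deg_R(w,q) < f(w,q)$. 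Adding $(w,q)$ to $R$: in $H[R \cup \{(w,q)\}]$, the only new vertex is $(w,q)$ with degree less than $f(w,q)$, and every nonempty subgraph either avoids $(w,q)$ — hence is strictly $f$-degenerate by the choice of $R$ — or contains $(w,q)$, which then witnesses strict $f$-degeneracy for that subgraph (deleting $(w,q)$ leaves a strictly $f$-degenerate graph, and $(w,q)$ itself has small enough degree). So $R \cup \{(w,q)\}$ is a strictly $f$-degenerate transversal of $H$, contradicting the hypothesis on $(H,f)$. This establishes $\sum_q f(w,q) \le \deg_G(w)$ for every $w$.

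The second part is essentially a re-use of the argument already recorded in \autoref{GE}; indeed, once $G$ is known to be connected, the degree inequality is exactly the contrapositive of \autoref{GE} applied to $(H,f)$ — if strict inequality held at some vertex, \autoref{GE} would hand us a strictly $f$-degenerate transversal. So the cleanest writeup invokes \autoref{GE} directly for the second claim rather than redoing the vertex-addition. I expect no serious obstacle here: the only point requiring a little care is the bookkeeping that adding one vertex of degree below its $f$-value to a strictly $f$-degenerate graph keeps it strictly $f$-degenerate, which follows immediately by checking nonempty subgraphs according to whether they contain the new vertex.
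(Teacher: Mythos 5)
Your direct argument is correct and is essentially the paper's proof: for the degree inequality the paper likewise takes a strictly $f$-degenerate transversal $T$ of $H - X_{w}$ (available by minimality), notes that $\sum_{q}\deg_{T}(w,q) \leq \deg_{G}(w) < \sum_{q} f(w,q)$ forces some $(w,q)$ with $\deg_{T}(w,q) < f(w,q)$, and adds that vertex; your explicit check that the augmented set remains strictly $f$-degenerate, and your component-splitting argument for connectedness (which the paper dismisses with ``it is observed''), are both fine. One warning, though: your closing suggestion to replace the vertex-addition by a direct appeal to \autoref{GE} does not work. \autoref{GE} assumes $f(v,1)+\dots+f(v,\kappa) \geq \deg_{G}(v)$ at \emph{every} vertex of $G$ (with strict inequality somewhere), whereas in \autoref{L} you are only entitled to strict inequality at the single vertex $w$ and know nothing about the sums at other vertices; indeed the conclusion of \autoref{L} is precisely that the reverse inequality holds everywhere, so the hypothesis of \autoref{GE} is exactly what you cannot assume. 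Keep the vertex-addition argument; it uses only the minimality of $(H,f)$ and is the right tool here.
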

\begin{proof}
Note that $G$ is connected. Suppose that $G$ has a vertex $w$ such that $f(w, 1) + f(w, 2) + \dots + f(w, s) > \deg_{G}(w)$. By the minimality, $H - X_{w}$ has a strictly $f$-degenerate transversal $T$. Note that 
\begin{equation*}
\deg_{T}(w, 1) + \deg_{T}(w, 2) + \dots + \deg_{T}(w, s) \leq \deg_{G}(w) < f(w, 1) + f(w, 2) + \dots + f(w, s),
\end{equation*}
thus there exists a vertex $(w, q)$ such that $\deg_{T}(w, q) < f(w, q)$. Hence, $(w, q)$ together with $T$ is a strictly $f$-degenerate transversal of $H$, a contradiction. 
\end{proof}

The next theorem is an analogue to \autoref{Critical-1}, \ref{Critical-2} and \ref{Critical-3}. Let 
\begin{equation*}
\mathscr{D} \coloneqq \{\,v \mid f(v, 1) + f(v, 2) + \dots + f(v, s) \geq \deg_{G}(v)\,\}.
\end{equation*}

\begin{theorem}
Let $G$ be a graph and $(H, f)$ be a valued cover of $G$. If $(H, f)$ is a minimal non-strictly $f$-degenerate pair, $B$ is a nonempty subset of $\mathscr{D}$ with $G[B]$ having no cut vertex, then $G[B]$ is a cycle, or a complete graph, or $\deg_{G[B]}(v) \leq \max_{q} \big\{f(v, q)\big\}$ for each $v \in B$. 
\end{theorem}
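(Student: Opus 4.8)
The plan is to argue by contradiction, pushing the obstruction onto the induced subgraph and then invoking \autoref{MR}. Write $S := V(F)$ and $T := V(G)\setminus S$, so that $G' = G[S]$. Since $F$ is a $2$-connected spanning subgraph of $G'$, the graph $G'$ is itself $2$-connected, hence connected; and $V(F)\subseteq\mathcal{D}$ means $f(v,1)+f(v,2)+\dots+f(v,\kappa)\ge\deg_{G}(v)$ for every $v\in S$. Assume for contradiction that $G'$ is neither a complete graph nor a cycle and that some vertex $w_{0}\in S$ satisfies $\deg_{G'}(w_{0}) > \max_{q} f(w_{0},q)$; the goal is to build a strictly $f$-degenerate transversal of $H$.

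First I would deal with $T$. Choose any $z\in S$; by minimality $H-X_{z}$ has a strictly $f$-degenerate transversal $R_{0}$, and since $z\notin T$ the set $R_{T} := R_{0}\cap V(H_{T})$ is a transversal of $H_{T}$. Using $R_{T}$ I pass to a reduced valued cover of $G'$: for $v\in S$ and $q\in[\kappa]$ let $a_{v}(q)$ be the number of neighbours $u$ of $v$ in $T$ whose chosen vertex in $R_{T}$ is adjacent in $H$ to $(v,q)$, and set $\hat f(v,q):=\max\{0,\,f(v,q)-a_{v}(q)\}$. As each such $u$ contributes to at most one $a_{v}(q)$, a short count gives $\sum_{q}\hat f(v,q)\ge\sum_{q}f(v,q)-\sum_{q}a_{v}(q)\ge\deg_{G}(v)-\bigl(\deg_{G}(v)-\deg_{G'}(v)\bigr)=\deg_{G'}(v)$ for every $v\in S$, so $(H_{S},\hat f)$ is a valued cover of the connected graph $G'$ meeting the hypothesis of \autoref{MR}.

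The argument then splits on whether $(H_{S},\hat f)$ is constructible. If it is not, \autoref{MR} yields a strictly $\hat f$-degenerate transversal $R'$ of $H_{S}$, and I claim $R_{T}\cup R'$ is a strictly $f$-degenerate transversal of $H$: ordering $R'$ first (by an $\hat f$-removing order) and $R_{T}$ afterwards (in the order inherited from $R_{0}$), each $(v,q)\in R'$ has at most $\hat f(v,q)-1$ later neighbours inside $R'$ and exactly $a_{v}(q)$ neighbours in $R_{T}$, and $(\hat f(v,q)-1)+a_{v}(q)<f(v,q)$, while each vertex of $R_{T}$ keeps no more later neighbours than it had in $R_{0}$; this contradicts that $H$ has no strictly $f$-degenerate transversal. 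If instead $(H_{S},\hat f)$ is constructible, then \autoref{FC} forces $\sum_{q}\hat f(v,q)=\deg_{G'}(v)$ for all $v\in S$, so every inequality in the count above is tight, whence $\hat f(v,q)=f(v,q)-a_{v}(q)$ for all $v,q$. Since $G'$ is $2$-connected, $(H_{S},\hat f)$ cannot arise from the vertex-identification in \autoref{constructible} (that would exhibit a cut vertex of $G'$), so it is a building cover of $G'$; as $G'$ is neither complete nor a cycle, it must be a monoblock, i.e.\ the kernel of $(H_{S},\hat f)$ is isomorphic to $G'$. Using that $G'$ is $2$-connected (so no vertex of $G'$ is isolated) this forces each $X_{v}$ to contain a unique coordinate $q_{v}$ with $\hat f(v,q_{v})=\deg_{G'}(v)>0$, whence $\max_{q}f(v,q)\ge f(v,q_{v})\ge\hat f(v,q_{v})=\deg_{G'}(v)$ for all $v\in S$, contradicting the choice of $w_{0}$.

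The step I expect to be the most delicate is the bookkeeping in the "gluing" argument — verifying precisely that the concatenated order on $R_{T}\cup R'$ is an $f$-removing order, i.e.\ that the deficiency $a_{v}(q)$ contributed by the fixed $T$-side exactly compensates the drop from $f$ to $\hat f$ — together with the structural deduction that a constructible cover of the $2$-connected, non-complete, non-cyclic graph $G'$ must be a monoblock rather than one of the exceptional configurations attached to complete graphs or cycles. The case $S=V(G)$ needs no separate treatment: then $T=\emptyset$, $R_{T}=\emptyset$, $\hat f=f$ and $H_{S}=H$, and the same dichotomy applies, the non-constructible branch now being contradicted by \autoref{MR} applied to $(H,f)$ directly.
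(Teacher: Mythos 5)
Your proof follows essentially the same route as the paper's: obtain a strictly $f$-degenerate transversal of $H_{V(G)\setminus V(F)}$ from minimality, reduce $f$ on $H_{V(F)}$ by the number of already-chosen neighbours, apply \autoref{MR} to conclude the reduced cover must be constructible, and use $2$-connectivity together with the building-cover classification to land in the monoblock case, which gives $\deg_{G'}(v)\leq\max_{q}f(v,q)$. The argument is correct; you merely spell out details the paper leaves implicit (the gluing/ordering verification and the exclusion of the vertex-identification step).
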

\begin{proof}
By the minimality, $\big(H - \bigcup_{v \in B}X_{v}, f\big)$ has a strictly $f$-degenerate transversal $T$. Let $H' = H_{B}$ and $\tau(v, q)$ be the number of neighbors  of $(v, q)$ in $T$. We define a new function $f'$ on $H'$ by $f'(v, q) = \max\{0, f(v, q) - \tau(v, q)\}$ for each $v \in B$ and $q \in [s]$. Note that 
\begin{equation*}
f'(v, 1) + f'(v, 2) + \dots + f'(v, s) \geq f(v, 1) + f(v, 2) + \dots + f(v, s) - \deg_{G - B}(v) \geq \deg_{G}(v) - \deg_{G - B}(v) = \deg_{G[B]}(v) 
\end{equation*}
for each $v \in B$. If $H'$ has a strictly $f'$-degenerate transversal $T'$, then an $f'$-removing order of $T'$ followed by an $f$-removing order of $T$ is an $f$-removing order of $T' \cup T$, which implies that $T'\cup T$ is a strictly $f$-degenerate transversal of $H$, a contradiction. So we may assume that $H'$ has no strictly $f'$-degenerate transversal. By \autoref{MR}, $(H', f')$ is constructible. Since $G[B]$ is $2$-connected, this implies that $G[B]$ is a cycle or a complete graph or $(H', f')$ is a monoblock. Moreover, if $(H', f')$ is a monoblock, then for each $v \in B$, there exists a vertex $(v, q_{v})$ in $V(H')$ such that $\deg_{G[B]}(v) = f'(v, q_{v}) \leq \max_{q} \big\{f(v, q)\big\}$. 
\end{proof}

\section{Applications}\label{sec:applications}
In this section, we give several applications of our main results, some of them are new and others are known. Using our results in the previous sections, some proofs are very short, so we omit them. 

\begin{theorem}\label{THM-5.1}
Let $m$ be an integer. If $G$ is a strictly $m$-degenerate graph and $(H, f)$ is a valued cover of $G$ with $f(v, 1) + f(v, 2) + \dots + f(v, s) \geq m$ for each $v \in V(G)$, then $H$ has a strictly $f$-degenerate transversal. 
\end{theorem}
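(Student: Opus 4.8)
The plan is to induct on the number of vertices of $G$ (equivalently, on $|V(H)|$), exploiting that strict $m$-degeneracy of $G$ gives a vertex of low degree to peel off. Since $G$ is strictly $m$-degenerate, it has a vertex $w$ with $\deg_G(w) \leq m-1 < m \leq f(w,1)+\dots+f(w,\kappa)$; in particular the hypothesis is inherited by $G-w$ (which is again strictly $m$-degenerate) together with the restriction of the valued cover. By the induction hypothesis, $H - X_w$ has a strictly $f$-degenerate transversal $R$.

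It then remains to extend $R$ to a transversal of all of $H$ by choosing some vertex of $X_w$. The key inequality is
\[
\deg_R(w,1) + \deg_R(w,2) + \dots + \deg_R(w,\kappa) \leq \deg_G(w) \leq m - 1 < f(w,1) + f(w,2) + \dots + f(w,\kappa),
\]
so by a counting/pigeonhole argument there is some $q \in [\kappa]$ with $\deg_R(w,q) < f(w,q)$. Adding $(w,q)$ to $R$ keeps the transversal property, and one checks that $H[R \cup \{(w,q)\}]$ is still strictly $f$-degenerate: any nonempty subgraph either misses $(w,q)$, in which case it has a vertex of degree less than $f$ by strict $f$-degeneracy of $H[R]$, or contains $(w,q)$, and then $(w,q)$ itself has degree at most $\deg_R(w,q) < f(w,q)$ in that subgraph. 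This is essentially the argument already used in the proof of \autoref{GE} and in \autoref{L}, so the write-up can be kept short by citing that idea. Alternatively, one could simply note that $\deg_G(w) \le m-1$ is strictly less than the sum $\sum_q f(w,q) \ge m$, and invoke \autoref{GE} directly after observing the strict inequality holds at $w$.

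Actually the cleanest route is to bypass the explicit induction: take an $m$-removing order $v_1, v_2, \dots, v_n$ of $G$, so each $v_i$ has fewer than $m$ neighbors among $v_{i+1}, \dots, v_n$. Process the vertices in the order $v_n, v_{n-1}, \dots, v_1$, and when reaching $v_i$ note that the already-chosen transversal vertices among $X_{v_{i+1}}, \dots, X_{v_n}$ contribute at most $\deg_G(v_i) $ restricted to later neighbors, hence fewer than $m \le \sum_q f(v_i,q)$ edges into $X_{v_i}$; pigeonhole then yields an index $q$ with fewer than $f(v_i,q)$ such neighbors, and we pick $(v_i,q)$. The resulting transversal $R$ admits the ordering $v_1, \dots, v_n$ (pulled back to $H[R]$) as an $f$-removing order, so $H[R]$ is strictly $f$-degenerate.

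The main obstacle is minor: making precise that the greedy/peeling choice produces a global $f$-removing order rather than just a locally valid choice at each step — one must be careful that "fewer than $f(v_i,q)$ neighbors in the current partial transversal" refers exactly to the neighbors that survive in $H[R]$ to the right of $v_i$ in the chosen order, so that the $m$-removing order of $G$ transfers to an $f$-removing order of $H[R]$. There is no real difficulty here since edges of $H[R]$ project injectively onto a subset of the edges of $G$; it is just a bookkeeping check. No constructibility analysis or Brooks-type case work is needed, because the strict inequality $\deg_G(v) < \sum_q f(v,q)$ is available at every vertex via the degeneracy order.
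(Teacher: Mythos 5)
Your proposal is correct and takes essentially the same route as the paper: the paper's proof is a minimal-counterexample phrasing of your induction, with \autoref{L} playing the role of your pigeonhole extension step, so that minimum degree at least $m$ contradicts strict $m$-degeneracy (and your greedy pass along an $m$-removing order is just the constructive form of the same idea). The only flaw is the aside suggesting one could invoke \autoref{GE} directly: \autoref{GE} requires $f(v,1)+\dots+f(v,\kappa)\ge\deg_G(v)$ at \emph{every} vertex, which the hypothesis $\sum_q f(v,q)\ge m$ does not provide (consider the centre of a star when $m=2$); your two main arguments do not use this shortcut, so nothing essential is affected.
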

\begin{proof}
Suppose that $G$ is a counterexample with the number of vertices as small as possible. Note that $(H, f)$ is a minimal non-strictly $f$-degenerate pair. By \autoref{L}, the minimum degree of $G$ is at least $m$, but the strictly $m$-degenerate graph $G$ has minimum degree less than $m$, a contradiction. 
\end{proof}

This immediately implies the following results. 
\begin{theorem}[Dvo\v{r}\'{a}k and Postle \cite{MR3758240}]
Every strictly $m$-degenerate graph is DP-$m$-colorable. 
\end{theorem}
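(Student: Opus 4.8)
The plan is to observe that this statement is an immediate corollary of \autoref{THM-5.1}, obtained simply by restricting the valued covers under consideration to those arising in the definition of the DP-chromatic number. First I would recall that, by definition, $G$ is DP-$m$-colorable precisely when every valued cover $(H, f)$ of $G$ in which $H$ is a cover of $G$ and $f$ is a function from $V(H)$ to $\{0, 1\}$ satisfying $f(v, 1) + f(v, 2) + \dots + f(v, \kappa) \geq m$ for each $v \in V(G)$ admits a DP-coloring; and a DP-coloring of such a pair is exactly a strictly $f$-degenerate transversal of $H$ for that particular $\{0, 1\}$-valued $f$ (an independent transversal).

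Next I would fix an arbitrary cover $H$ of $G$ together with a function $f \colon V(H) \to \{0, 1\}$ with $f(v, 1) + f(v, 2) + \dots + f(v, \kappa) \geq m$ for every $v \in V(G)$. Since $\{0, 1\} \subseteq \mathbb{Z}^{*}$, the pair $(H, f)$ is a valued cover of $G$ satisfying exactly the hypothesis of \autoref{THM-5.1} (the possibility that some $\mathscr{M}_{uv}$ is empty or that some $f(v, q) = 0$ is already permitted there). As $G$ is strictly $m$-degenerate, \autoref{THM-5.1} yields a strictly $f$-degenerate transversal $R$ of $H$, which is precisely a DP-coloring of $(H, f)$.

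Since $H$ and $f$ were arbitrary subject to the stated constraints, it follows that $\chi_{\mathrm{DP}}(G) \leq m$, i.e., $G$ is DP-$m$-colorable. I expect no real obstacle here: the entire content has already been carried by \autoref{THM-5.1} (and ultimately by \autoref{L}), and the only point requiring care is the bookkeeping that a $\{0, 1\}$-valued strictly $f$-degenerate transversal coincides with a DP-coloring, which is immediate from the definitions given in the introduction.
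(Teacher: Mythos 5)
Your proof is correct and matches the paper's route exactly: the paper states this result as an immediate consequence of \autoref{THM-5.1}, obtained by specializing to $\{0,1\}$-valued functions $f$, which is precisely the bookkeeping you carry out. No gaps.
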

\begin{theorem}
If $G$ is a strictly $m$-degenerate graph and $t_{1} + t_{2} + \dots + t_{s} \geq m$, then $G$ is $(t_{1}, t_{2}, \dots, t_{s})$-partitionable. 
\end{theorem}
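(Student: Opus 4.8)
The plan is to deduce this from \autoref{THM-5.1} by constructing an appropriate ID-cover. Given a strictly $m$-degenerate graph $G$ and nonnegative integers $t_{1}, t_{2}, \dots, t_{\kappa}$ with $t_{1} + t_{2} + \dots + t_{\kappa} \geq m$, I would take the constant functions $f_{i} \equiv t_{i}$ on $V(G)$, form the valued ID-cover $(H, f)$ of $G$ with $f(v, i) = t_{i}$ for every $v \in V(G)$ and $i \in [\kappa]$, as in the Proposition in the introduction. Then for each $v$ we have $f(v, 1) + f(v, 2) + \dots + f(v, \kappa) = t_{1} + t_{2} + \dots + t_{\kappa} \geq m$, so the hypothesis of \autoref{THM-5.1} is satisfied.

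Next I would apply \autoref{THM-5.1} to conclude that $H$ has a strictly $f$-degenerate transversal $R$. Finally, by the Proposition relating strictly $f$-degenerate transversals of an ID-cover to $(f_{1}, f_{2}, \dots, f_{\kappa})$-partitions, the existence of such an $R$ is equivalent to $G$ being $(f_{1}, f_{2}, \dots, f_{\kappa})$-partitionable, which, since $f_{i} \equiv t_{i}$, is exactly saying $G$ is $(t_{1}, t_{2}, \dots, t_{\kappa})$-partitionable. This completes the argument.

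There is essentially no obstacle here: the theorem is a direct specialization obtained by feeding constant functions into the ID-cover machinery already set up. The only thing to be mildly careful about is the degenerate case where some $t_{i} = 0$, but this is already handled transparently by the conventions in the paper (a vertex with $f_{i}(v) = 0$ simply cannot be placed in $V_{i}$, and correspondingly $(v, i)$ cannot lie in a strictly $f$-degenerate transversal), so the equivalence in the Proposition still holds verbatim. Hence the proof is a two-line invocation of \autoref{THM-5.1} and the Proposition, and one could reasonably just state "This follows immediately from \autoref{THM-5.1} and the Proposition in \autoref{sec:applications}'s preamble" and omit the details, consistent with the paper's stated policy of omitting short proofs.
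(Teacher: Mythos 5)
Your proposal is correct and matches the paper's own treatment: the paper derives this statement as an immediate consequence of \autoref{THM-5.1} via exactly the ID-cover construction and the equivalence in the Proposition, offering no further detail. Your explicit verification of the degree condition and the handling of $t_{i}=0$ fills in precisely what the paper leaves implicit.
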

Duffin \cite{MR0175809} showed that every $K_{4}$-minor-free graph is strictly $3$-degenerate, so we have the following result. 
\begin{theorem}
Every $K_{4}$-minor-free graph can be partitioned into an independent set and an induced forest. 
\end{theorem}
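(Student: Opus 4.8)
The plan is to apply the preceding theorems to the specific structure of $K_4$-minor-free graphs. First I would invoke Duffin's result, cited just above the statement, that every $K_4$-minor-free graph $G$ is strictly $3$-degenerate. The goal is to partition $V(G)$ into $V_1$ and $V_2$ where $G[V_1]$ is edgeless (an independent set) and $G[V_2]$ is a forest. In the language of this paper, an independent set is exactly a strictly $1$-degenerate induced subgraph, and a forest is exactly a strictly $2$-degenerate induced subgraph. So I want to show $G$ is $(f_1, f_2)$-partitionable where $f_1 \equiv 1$ and $f_2 \equiv 2$ are constant functions on $V(G)$.

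Next I would apply the theorem immediately preceding: "If $G$ is a strictly $m$-degenerate graph and $t_1 + t_2 + \dots + t_\kappa \geq m$, then $G$ is $(t_1, t_2, \dots, t_\kappa)$-partitionable." Here I take $\kappa = 2$, $t_1 = 1$, $t_2 = 2$, and $m = 3$. Since $t_1 + t_2 = 3 \geq 3 = m$ and $G$ is strictly $3$-degenerate by Duffin's theorem, the hypotheses are met, and the conclusion is that $G$ is $(1, 2)$-partitionable. A $(1,2)$-partition $(V_1, V_2)$ gives $G[V_1]$ strictly $1$-degenerate, i.e.\ edgeless, hence an independent set, and $G[V_2]$ strictly $2$-degenerate, i.e.\ a forest. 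This is exactly the desired partition.

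There is essentially no obstacle here: the proof is a two-line deduction chaining Duffin's degeneracy bound with the partition theorem, specialized to constant functions $1$ and $2$. The only thing to be slightly careful about is the translation dictionary — that "strictly $1$-degenerate" means edgeless and "strictly $2$-degenerate" means a forest — but this is recorded explicitly in the introduction of the paper, so I would simply cite that observation. Thus the entire argument is: $K_4$-minor-free $\Rightarrow$ strictly $3$-degenerate (Duffin) $\Rightarrow$ $(1,2)$-partitionable (the partition theorem, with $t_1 = 1$, $t_2 = 2$, $m = 3$) $\Rightarrow$ decomposes into an independent set and an induced forest.
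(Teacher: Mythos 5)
Your proof is correct and is exactly the paper's intended argument: the paper states Duffin's result and then asserts the theorem as an immediate consequence of the preceding partition theorem, which is precisely your chain with $t_1=1$, $t_2=2$, $m=3$. Nothing is missing.
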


\begin{figure}[htbp]%
\centering
\subcaptionbox{\label{fig:subfig:a-}}
{\begin{tikzpicture}[scale = 0.8]
\coordinate (A) at (45:1);
\coordinate (B) at (135:1);
\coordinate (C) at (225:1);
\coordinate (D) at (-45:1);
\coordinate (H) at (90:1.414);
\draw (A)--(H)--(B)--(C)--(D)--cycle;
\draw (A)--(B);
\node[circle, inner sep = 1.5, fill, draw] () at (A) {};
\node[circle, inner sep = 1.5, fill, draw] () at (B) {};
\node[circle, inner sep = 1.5, fill, draw] () at (C) {};
\node[circle, inner sep = 1.5, fill, draw] () at (D) {};
\node[circle, inner sep = 1.5, fill, draw] () at (H) {};
\end{tikzpicture}}\hspace{1.5cm}
\subcaptionbox{\label{fig:subfig:b-}}
{\begin{tikzpicture}[scale = 0.8]
\coordinate (A) at (30:1);
\coordinate (B) at (150:1);
\coordinate (C) at (225:1);
\coordinate (D) at (-45:1);
\coordinate (H) at (90:1.414);
\coordinate (X) at (60:1.4);
\coordinate (Y) at (120:1.4);
\draw (A)--(X)--(H)--(Y)--(B)--(C)--(D)--cycle;
\draw (A)--(H)--(B);
\node[circle, inner sep = 1.5, fill, draw] () at (A) {};
\node[circle, inner sep = 1.5, fill, draw] () at (B) {};
\node[circle, inner sep = 1.5, fill, draw] () at (C) {};
\node[circle, inner sep = 1.5, fill, draw] () at (D) {};
\node[circle, inner sep = 1.5, fill, draw] () at (H) {};
\node[circle, inner sep = 1.5, fill, draw] () at (X) {};
\node[circle, inner sep = 1.5, fill, draw] () at (Y) {};
\end{tikzpicture}}
\caption{Forbidden configurations in \autoref{MRONEPLANAR} and \autoref{MRONE}.}
\label{NOA}
\end{figure}

Li and Wang \cite{Li2019} investigated some planar/toroidal graphs without certain small subgraphs. 

\begin{restatable}[Li and Wang \cite{Li2019}]{theorem}{MRONEPLANAR}\label{MRONEPLANAR}
Let $G$ be a planar graph without subgraphs isomorphic to the configurations in \autoref{NOA}, and let $(H, f)$ be a valued cover of $G$. If $f(v, 1) + f(v, 2) + \dots + f(v, s) \geq 4$ for each $v \in V(G)$, then $H$ has a strictly $f$-degenerate transversal. 
\end{restatable}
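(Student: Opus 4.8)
The plan is to follow the discharging paradigm that is standard for planar graph results of this flavour, using \autoref{MR} (equivalently \autoref{SMR}) as the structural engine. First I would argue by contradiction: suppose $G$ is a counterexample with the minimum number of vertices, so that $(H,f)$ is a minimal non-strictly $f$-degenerate pair. By \autoref{L}, every vertex $v$ satisfies $f(v,1)+\dots+f(v,\kappa)\le \deg_G(v)$, and combined with the hypothesis $f(v,1)+\dots+f(v,\kappa)\ge 4$ this forces $\deg_G(v)\ge 4$ for all $v$, i.e.\ $G$ has minimum degree at least $4$. Vertices of degree exactly $4$ are the ``bad'' vertices (those in $\mathcal{D}$), and the main structural tool will be the Gallai-type theorem proved just above: any $2$-connected subgraph $F$ of $G$ all of whose vertices have degree $4$ must induce a cycle, a complete graph, or a graph in which $\deg_{G[V(F)]}(v)\le \max_q f(v,q)$; since $\sum_q f(v,q)=4$ and $\deg_{G[V(F)]}(v)$ can be as large as $4$, this is a strong restriction — in particular $K_5$ is excluded for planar $G$, so the relevant blocks of the subgraph induced by degree-$4$ vertices are triangles, $K_4$'s, cycles, or ``small-degeneracy'' blocks.

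Next I would set up the discharging. Assign to each vertex $v$ the charge $\deg_G(v)-4$ and to each face the charge $\mathrm{len}(f)-4$; by Euler's formula the total charge is $-8<0$. The only vertices with negative charge are the degree-$4$ vertices ($-1$ each) and the only faces with negative charge are the triangles ($-1$ each); all other vertices and faces have nonnegative charge, with larger vertices and longer faces holding a surplus. The discharging rules should move charge from big vertices and long faces to the degree-$4$ vertices and to the triangular faces, and the goal is to show every element ends with nonnegative charge, contradicting the negative total. The forbidden configurations in \autoref{NOA} — which is a $C_5$ with one chord (a ``house''), and a $C_7$ with two chords creating an adjacent-triangles pattern — are precisely the reducible configurations one needs to rule out so that a degree-$4$ vertex cannot be surrounded by too many triangles and too many other degree-$4$ neighbours at once. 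Concretely, I expect rules of the type: each big vertex sends $\tfrac14$ (or $\tfrac12$) to each incident triangle, each triangle receives enough to reach $0$, and each degree-$4$ vertex lying in many triangles is saved because the absence of the \autoref{NOA} configurations guarantees at least one incident face of length $\ge 4$ or at least one neighbour of degree $\ge 5$.

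The reducibility of the local configurations is where \autoref{MR}/\autoref{SMR} does the work: given a small subgraph $K$ that appears to be forced, one deletes $X_{V(K)}$, extracts by minimality a strictly $f$-degenerate transversal $T$ of $H-\bigcup_{v\in K}X_v$, forms the reduced valued cover $(H_{V(K)},f')$ with $f'(v,q)=\max\{0,f(v,q)-\tau(v,q)\}$ where $\tau(v,q)$ counts neighbours of $(v,q)$ in $T$, checks that $\sum_q f'(v,q)\ge \deg_{G[V(K)]}(v)$ for every $v\in K$, and then invokes \autoref{MR} to conclude $(H_{V(K)},f')$ is either constructible or has a strictly $f$-degenerate transversal; in the latter case we extend $T$ and contradict minimality, and in the former case the very rigid constructible structure (monoblock, $\widetilde{K_p}$, circular ladder, or Möbius ladder over a tree of these) is incompatible with the planar embedding of $K$ together with the way $K$ sits inside $G$ — this is exactly the step that uses planarity and the \autoref{NOA} hypothesis. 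The main obstacle, and the part that will require the most care, is the bookkeeping of the discharging: choosing rules delicate enough that every degree-$4$ vertex is compensated using only the guarantees that the two forbidden configurations provide, while simultaneously making sure no big vertex or long face is over-drained. Verifying that the constructible exceptions in the reducibility arguments genuinely cannot occur (because a circular ladder or Möbius ladder cover would force a $K_5$ or $K_{3,3}$ minor, or simply contradict $\sum_q f(v,q)=4$ on a vertex of smaller degree in $G[V(K)]$) is a finite but fiddly case analysis that I would organize around the list of possible $2$-connected blocks singled out by the Gallai-type theorem.
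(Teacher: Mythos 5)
The paper itself does not prove \autoref{MRONEPLANAR}: it is attributed to Li and Wang \cite{Li2019} and listed among the applications whose proofs are omitted, so there is no in-paper argument to compare yours against. Judged on its own terms, your proposal identifies the right machinery — minimal counterexample, \autoref{L} to force $f(v,1)+\dots+f(v,\kappa)\le\deg_G(v)$ and hence $\delta(G)\ge 4$, reducibility via deleting a configuration, passing to the reduced function $f'(v,q)=\max\{0,f(v,q)-\tau(v,q)\}$, and invoking \autoref{MR} — and this is indeed how such results are proved. But what you have written is a plan, not a proof: the entire mathematical content of the theorem lives in the parts you defer. You never state the discharging rules, never verify that every vertex and face ends with nonnegative charge, never exhibit the list of reducible configurations, and never carry out the case analysis showing that the constructible exceptions of \autoref{MR} cannot arise. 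Saying that the rules "should" move charge so that degree-$4$ vertices are compensated, and that the forbidden configurations of \autoref{NOA} are "precisely" what one needs, is asserting the conclusion of the discharging argument rather than performing it; there is no way to check from your text that the two forbidden configurations actually suffice.

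Two smaller points. First, your identification of $\mathcal{D}$ with the degree-$4$ vertices is not accurate: after \autoref{L}, $\mathcal{D}$ is the set of vertices with $f(v,1)+\dots+f(v,\kappa)=\deg_G(v)$, which can include vertices of degree $5,6,\dots$ whenever the $f$-sum happens to equal the degree, so the Gallai-type theorem constrains more than the degree-$4$ subgraph. Second, and more importantly for the reducibility step, deleting a configuration $K$ and applying \autoref{MR} to $(H_{V(K)},f')$ requires $\sum_q f'(v,q)\ge\deg_{G[V(K)]}(v)$ for all $v\in V(K)$, which (since $\sum_q f(v,q)\ge 4$ is all you know) forces every vertex of $K$ to have degree exactly $4$ in $G$; you mention the inequality must be "checked" but do not observe that this restricts which configurations can even be candidates for reduction, which in turn dictates what the discharging must deliver. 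Until the rules are written down, the final charges computed, and the constructible cases eliminated, the proof is not complete.
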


\begin{restatable}[Li and Wang \cite{Li2019}]{theorem}{MRONE}\label{MRONE}
Let $G$ be a toroidal graph without subgraphs isomorphic to the configurations in \autoref{NOA}, and let $(H, f)$ be a valued cover of $G$. If $f(v, 1) + f(v, 2) + \dots + f(v, s) \geq 4$ for each $v \in V(G)$, and $(H_{S}, f)$ is not a monoblock whenever $G[S]$ is a 2-connected $4$-regular graph, then $H$ has a strictly $f$-degenerate transversal. 
\end{restatable}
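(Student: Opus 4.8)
The plan is to mimic the discharging-free reduction used in the proof of \autoref{THM-5.1}, but now exploiting the toroidal structure together with \autoref{MR} to rule out the constructible obstructions. First I would suppose, for contradiction, that $G$ is a counterexample with the minimum number of vertices, and fix a valued cover $(H, f)$ satisfying the hypotheses with no strictly $f$-degenerate transversal. As in \autoref{THM-5.1}, minimality forces $G$ to be connected and $(H, f)$ to be a minimal non-strictly $f$-degenerate pair, so \autoref{L} applies: every vertex $v$ satisfies $f(v,1)+\dots+f(v,\kappa)\le \deg_G(v)$, and combined with the hypothesis $f(v,1)+\dots+f(v,\kappa)\ge 4$ we get $\deg_G(v)\ge 4$ for all $v$. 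Thus $G$ has minimum degree at least $4$.

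Next I would invoke a Euler-formula / sparsity argument for toroidal graphs avoiding the configurations of \autoref{NOA}: a toroidal graph has $|E(G)| \le 2|V(G)|$, so its average degree is at most $4$; since $\delta(G)\ge 4$, the graph $G$ must be $4$-regular. (Here the excluded configurations in \autoref{NOA} should be exactly what is needed to push the discharging/counting so that no vertex of degree $\ge 5$ can appear while keeping $\delta\ge 4$ — this is the toroidal analogue of the planar statement \autoref{MRONEPLANAR}, where Euler's formula gives strict sparsity and hence a vertex of degree $<4$, a contradiction; on the torus one only gets $4$-regularity instead of an outright contradiction.) So now $G$ is a connected $4$-regular toroidal graph, and since $\delta=4=f(v,1)+\dots+f(v,\kappa)$ at every vertex, we in fact have equality $f(v,1)+\dots+f(v,\kappa)=\deg_G(v)$ everywhere.

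Now I would bring in the structural result of \autoref{MR}: because $H$ has no strictly $f$-degenerate transversal and the degree condition holds with the equality above on the connected graph $G$, the cover $(H,f)$ is constructible. I would then analyze the constructible structure. By \autoref{FC} the sum condition is consistent, and by the recursive definition (\autoref{constructible}) a constructible cover is built from building covers glued at cut vertices; but $G$ being $4$-regular and $2$-connected (one should first argue $2$-connectivity, or handle blocks separately) means $G$ itself is a single block, so $(H,f)$ must be a building cover of $G$. The building-cover cases are: $(H,f)$ is a monoblock over $G$, or $G$ is a complete graph, or $G$ is a cycle. A $4$-regular graph is neither a cycle nor $K_p$ for any relevant $p$ except $K_5$ — and $K_5$ is not toroidal-relevant here in the sense that one checks it is excluded or handled — so the only surviving possibility is that $(H,f)$ is a monoblock over a $2$-connected $4$-regular graph $G[S]$ with $S=V(G)$. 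That is precisely the situation excluded by the extra hypothesis in the statement ("$(H_S,f)$ is not a monoblock whenever $G[S]$ is a $2$-connected $4$-regular graph"), yielding the contradiction and completing the proof.

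The main obstacle I anticipate is the sparsity/discharging step establishing that a toroidal graph avoiding the configurations of \autoref{NOA} with $\delta\ge 4$ must be $4$-regular: on the torus Euler's formula gives $|V|-|E|+|F|=0$ rather than $2$, so one loses the slack that makes the planar case immediate, and the excluded configurations must be used carefully to forbid a vertex of degree $\ge 5$ coexisting with $\delta\ge 4$. A secondary subtlety is the reduction to a single block (ensuring $2$-connectivity, or splitting the argument across blocks and noting each end-block of a $4$-regular graph behaves well), and the routine verification that among $2$-connected $4$-regular graphs the only building-cover type left after discarding cycles and complete graphs is the monoblock — this last part is essentially bookkeeping against \autoref{constructible}, but it must be done cleanly to land exactly on the hypothesis's escape clause.
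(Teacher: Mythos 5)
First, note that the paper does not actually prove \autoref{MRONE}: it is imported from Li and Wang \cite{Li2019} and stated without proof (the closest in-paper analogue is the proof of \autoref{TT}(ii)). So your proposal has to be judged on its own, and it has a genuine gap at exactly the step you flag as ``the main obstacle.'' The sparsity claim you use as a placeholder --- that a toroidal graph satisfies $|E(G)|\le 2|V(G)|$ and hence has average degree at most $4$ --- is false: Euler's formula on the torus gives only $|E|\le 3|V|$ (e.g.\ $K_{7}$ is toroidal with $|E|=3|V|$), and the bound $|E|\le 2|V|$ holds only for \emph{triangle-free} toroidal graphs, whereas graphs avoiding the configurations of \autoref{NOA} may well contain triangles. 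Consequently the deduction ``$\delta\ge 4$ plus toroidal plus forbidden configurations $\Rightarrow$ $G$ is $4$-regular (and $2$-connected)'' is not a routine count; it is the entire combinatorial content of the theorem and requires a discharging argument that actually uses the two excluded configurations. This step cannot be waved through, because without $4$-regularity you cannot even invoke \autoref{MR}: \autoref{L} gives $\sum_q f(v,q)\le\deg_G(v)$, so at a vertex of degree $5$ you might have $\sum_q f(v,q)=4<\deg_G(v)$ and the degree hypothesis of \autoref{MR} fails.

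The surrounding skeleton is sound and matches the paper's template for \autoref{TT}(ii): minimal counterexample, minimality forces a minimal non-strictly $f$-degenerate pair, \autoref{L} gives $\delta\ge 4$, and \emph{if} one can show $G$ is $2$-connected and $4$-regular with $\sum_q f(v,q)=\deg_G(v)=4$ everywhere, then \autoref{MR} makes $(H,f)$ a building cover, cycles and $K_p$ are ruled out ($K_5$ contains configuration (a)), and only the monoblock survives, which the hypothesis excludes. One further loose end: your treatment of the non-$2$-connected case is too optimistic. A connected $4$-regular graph can have cut vertices (each cut vertex then sends exactly two edges into each of two sides), and in that situation the blocks of $G$ are \emph{not} $4$-regular, so the hypothesis about monoblocks over $2$-connected $4$-regular subgraphs does not directly kill the resulting building covers; ruling this out is again part of what the structural/discharging analysis must deliver, not mere bookkeeping.
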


\begin{corollary}
If $G$ is a toroidal graph without subgraphs isomorphic to the configurations in \autoref{NOA}, then it is $(1, 3)$-partitionable and $(2, 2)$-partitionable. 
\end{corollary}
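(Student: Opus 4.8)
The plan is to deduce this directly from \autoref{MRONE}, using the fact that a $(1,3)$-partition (respectively $(2,2)$-partition) is precisely a strictly $f$-degenerate transversal in a suitably chosen ID-cover. First I would set $\kappa = 2$ and, for the $(1,3)$-partitionable claim, define $f_{1} \equiv 1$ and $f_{2} \equiv 3$ on $V(G)$; for the $(2,2)$-partitionable claim, define $f_{1} \equiv 2$ and $f_{2} \equiv 2$. In both cases $f_{1}(v) + f_{2}(v) = 4$ for every $v \in V(G)$, so taking the valued ID-cover $(H, f)$ with $f(v, 1) = f_{1}(v)$ and $f(v, 2) = f_{2}(v)$, the degree hypothesis $f(v, 1) + f(v, 2) \geq 4$ of \autoref{MRONE} is satisfied. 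By the Proposition relating ID-covers and $(f_{1}, f_{2}, \dots, f_{\kappa})$-partitions, it suffices to show that this particular $(H, f)$ has a strictly $f$-degenerate transversal.

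Next I would verify the extra hypothesis of \autoref{MRONE}: that $(H_{S}, f)$ is not a monoblock whenever $G[S]$ is a $2$-connected $4$-regular graph. Here the key observation is that $H$ is an ID-cover, so for any $S \subseteq V(G)$ the induced subcover $H_{S}$ is the Cartesian product of $G[S]$ with an independent $2$-set; thus the kernel of $H_{S}$ is a disjoint union of (copies of) subgraphs of $G[S]$ and cannot be isomorphic to $G[S]$ itself whenever $G[S]$ is connected and nonempty — in particular whenever $G[S]$ is $2$-connected. Since $G$ has no subgraph isomorphic to the configurations in \autoref{NOA}, it also has no such $S$ violating the monoblock condition, so the hypotheses of \autoref{MRONE} hold vacuously on this front, or more simply, the structure of an ID-cover rules monoblocks out directly. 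Applying \autoref{MRONE} then yields a strictly $f$-degenerate transversal of $H$, and the Proposition converts it back into the desired partition.

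I expect the only genuine subtlety to be the monoblock check: one must be careful that in the ID-cover $H_{S}$, the kernel (obtained by deleting vertices $(u, j)$ with $f(u, j) = 0$) is never isomorphic to $G[S]$. Since here $f$ is everywhere positive (its values are $1, 2, 3$, never $0$), the kernel of $H_{S}$ is all of $H_{S}$, which has $2|S|$ vertices and is disconnected (two parallel copies of $G[S]$ with no edges between them), so it cannot be isomorphic to the connected graph $G[S]$ on $|S|$ vertices. Hence the monoblock hypothesis is automatically satisfied, and everything else is the routine translation already provided by the Proposition. The two claims then follow by the two choices of $(f_{1}, f_{2})$ above.
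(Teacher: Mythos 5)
Your proposal is correct and follows exactly the route the paper intends: instantiate \autoref{MRONE} with the valued ID-cover given by the constant functions $(1,3)$ or $(2,2)$, note the degree sums equal $4$, rule out the monoblock case because $f$ is everywhere positive so the kernel of $H_{S}$ is two disjoint copies of $G[S]$ (hence not isomorphic to $G[S]$), and translate back via the Proposition on ID-covers. The monoblock check, which you correctly flag as the only subtlety, is handled properly in your final paragraph.
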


Two cycles are \textbf{adjacent} if they have at least one common edge, and two cycles are \textbf{intersecting} if they have at least one vertex in common. Every configuration in \autoref{NOA} has a $5$-cycle adjacent to a $3$-cycle, so we immediately have the following two known results. 
\begin{theorem}[Huang, Chen and Wang \cite{MR3320048}]
Every toroidal graph without $5$-cycles adjacent to $3$-cycles has list vertex arboricity at most two.
\end{theorem}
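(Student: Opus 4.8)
The plan is to recast the statement in the transversal language and then invoke \autoref{MRONE}. Fix an arbitrary list $2$-assignment $L$ of $G$ and put $\kappa = |\bigcup_{v \in V(G)} L(v)|$. Define $f_{i}(v) = 2$ if $i \in L(v)$ and $f_{i}(v) = 0$ otherwise. Since a color class induces a forest precisely when it is strictly $2$-degenerate, an $L$-forested-coloring of $G$ is exactly an $(f_{1}, f_{2}, \dots, f_{\kappa})$-partition of $G$, and by the proposition relating ID-covers to $(f_{1}, f_{2}, \dots, f_{\kappa})$-partitions such a partition exists if and only if the valued ID-cover $(H, f)$ of $G$ given by $f(v, i) = f_{i}(v)$ admits a strictly $f$-degenerate transversal. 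So it suffices to produce such a transversal, which I would do by checking that $(H, f)$ satisfies the hypotheses of \autoref{MRONE}.

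First, the degree condition holds trivially: $f(v, 1) + f(v, 2) + \dots + f(v, \kappa) = 2|L(v)| \geq 4$ for every $v \in V(G)$. Second, $G$ contains no subgraph isomorphic to either configuration in \autoref{NOA}: in the left configuration the $5$-cycle through $A, H, B, C, D$ shares the two edges $AH$ and $HB$ with the triangle on $A, H, B$, and in the right configuration that same $5$-cycle shares the edge $AH$ with the triangle on $A, X, H$, so each configuration already contains a $5$-cycle adjacent to a $3$-cycle, whence a toroidal graph with no such pair contains neither. (Each component of $G$ is a subgraph of a toroidal graph and hence toroidal, so \autoref{MRONE} may be applied to each component separately and the resulting transversals combined.)

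The remaining hypothesis is the side condition in \autoref{MRONE}: whenever $G[S]$ is a $2$-connected $4$-regular subgraph of $G$, the restricted valued cover $(H_{S}, f)$ must not be a monoblock. This is the one point needing an argument, and I expect it to be the main place to take care. For an ID-cover the kernel of $H_{S}$ is obtained by deleting all vertices $(u, j)$ with $f(u, j) = 0$, so it retains exactly the $|L(v)| \geq 2$ vertices $(v, j)$ with $j \in L(v)$ for each $v \in S$, and therefore has at least $2|S|$ vertices; but a monoblock would require its kernel to be isomorphic to the base graph $G[S]$, which has only $|S| \geq 1$ vertices. This contradiction shows $(H_{S}, f)$ is never a monoblock, so \autoref{MRONE} applies and yields a strictly $f$-degenerate transversal of $H$. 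Translating back gives an $L$-forested-coloring of $G$, and since $L$ was an arbitrary list $2$-assignment, $\mathrm{lva}(G) \leq 2$.

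Beyond \autoref{MRONE} itself the argument has no real difficulty: the degree condition is immediate, the forbidden-subgraph condition is a routine check that both configurations in \autoref{NOA} witness a $5$-cycle adjacent to a $3$-cycle, and the monoblock condition — the only nontrivial-looking hypothesis — collapses for ID-covers to the trivial inequality $2|S| > |S|$. So the main obstacle is really just organizing these verifications and matching them to the side conditions of \autoref{MRONE}, rather than proving anything deep.
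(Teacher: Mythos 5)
Your proposal is correct and is exactly the derivation the paper intends (the paper omits it as one of the "very short" proofs): encode the list $2$-assignment as a $\{0,2\}$-valued ID-cover, observe that both configurations in \autoref{NOA} contain a $5$-cycle sharing an edge with a $3$-cycle, and note that the monoblock hypothesis of \autoref{MRONE} is vacuous for such an ID-cover since the kernel of $H_{S}$ has at least $2|S|$ vertices. No gaps.
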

\begin{theorem}[Zhang \cite{MR3570576}]
Every toroidal graph without $5$-cycles has list vertex arboricity at most two.
\end{theorem}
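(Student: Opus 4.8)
The plan is to deduce this theorem from \autoref{MRONE} by encoding list-forested-colorings of $G$ as strictly $f$-degenerate transversals of an ID-cover. Since a disjoint union of graphs has an $L$-forested-coloring exactly when each component does, and each component of a toroidal graph is toroidal, I may assume $G$ is connected. Fix an arbitrary list $2$-assignment $L$ of $G$; after relabelling colours, assume all colours lie in $[\kappa]$ for a suitable $\kappa$. It suffices to produce an $L$-forested-coloring of $G$.

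Let $H$ be the ID-cover of $G$, with $X_{v}=\{(v,1),\dots,(v,\kappa)\}$, and define $f\colon V(H)\to\mathbb{Z}^{*}$ by $f(v,i)=2$ if $i\in L(v)$ and $f(v,i)=0$ otherwise. Then $f(v,1)+\cdots+f(v,\kappa)=2|L(v)|\geq 4$ for each $v\in V(G)$, so the degree hypothesis of \autoref{MRONE} is met. Also $G$ contains no subgraph isomorphic to a configuration in \autoref{NOA}, because each such configuration contains a $5$-cycle (for instance the cycle through $A,H,B,C,D$) and $G$ has none. Finally, the extra monoblock clause of \autoref{MRONE} holds vacuously for this cover: for every $S\subseteq V(G)$ the kernel of $H_{S}$ retains exactly $|L(v)|$ vertices above each $v\in S$, hence has $\sum_{v\in S}|L(v)|\geq 2|S|>|S|=|V(G[S])|$ vertices, so it cannot be isomorphic to $G[S]$; thus $(H_{S},f)$ is never a monoblock, in particular not when $G[S]$ is a $2$-connected $4$-regular graph.

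By \autoref{MRONE}, $H$ has a strictly $f$-degenerate transversal. Applying the Proposition relating strictly $f$-degenerate transversals of an ID-cover to $(f_{1},\dots,f_{\kappa})$-partitions (with $f_{i}(v):=f(v,i)$), $G$ admits a partition $(V_{1},\dots,V_{\kappa})$ of $V(G)$ with $G[V_{i}]$ strictly $f_{i}$-degenerate for each $i$. Since $f_{i}$ takes only the values $0$ and $2$, and a vertex with $f_{i}(v)=0$ cannot lie in $V_{i}$, we have $f_{i}\equiv 2$ on $V_{i}$; hence $G[V_{i}]$ is strictly $2$-degenerate, i.e.\ a forest, and $v\in V_{i}$ forces $i\in L(v)$. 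Therefore $\phi(v):=i$ for $v\in V_{i}$ is an $L$-forested-coloring of $G$. As $L$ was an arbitrary list $2$-assignment, $\mathrm{lva}(G)\leq 2$. The preceding theorem of Huang, Chen and Wang is obtained by the same scheme.

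The entire substance is carried by \autoref{MRONE}; the rest is bookkeeping. The one point that genuinely needs attention is the monoblock clause of \autoref{MRONE} — precisely the extra hypothesis separating the toroidal case from the planar one (\autoref{MRONEPLANAR}) — but for ID-covers coming from list assignments a one-line vertex count kills it, as above. The remaining verification, that each forbidden configuration in \autoref{NOA} contains a $5$-cycle, is read off the figure.
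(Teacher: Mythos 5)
Your proposal is correct and is exactly the derivation the paper intends (it lists Zhang's theorem as a direct consequence of \autoref{MRONE} and omits the proof): take the ID-cover with $f(v,i)=2$ for $i\in L(v)$ and $0$ otherwise, note both configurations of \autoref{NOA} contain the $5$-cycle $AHBCD$, and kill the monoblock clause by the vertex count $\sum_{v\in S}|L(v)|\geq 2|S|>|S|$. The translation back to an $L$-forested-coloring via the Proposition on ID-covers is also the mechanism the paper sets up in the introduction, so there is nothing to add.
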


\begin{figure}%
\centering
\subcaptionbox{\label{fig:subfig:a--}}
{\begin{tikzpicture}[scale = 0.8]
\coordinate (A) at (45:1);
\coordinate (B) at (135:1);
\coordinate (C) at (225:1);
\coordinate (D) at (-45:1);
\coordinate (H) at (90:1.414);
\draw (A)--(H)--(B)--(C)--(D)--cycle;
\draw (A)--(B);
\node[circle, inner sep = 1.5, fill, draw] () at (A) {};
\node[circle, inner sep = 1.5, fill, draw] () at (B) {};
\node[circle, inner sep = 1.5, fill, draw] () at (C) {};
\node[circle, inner sep = 1.5, fill, draw] () at (D) {};
\node[circle, inner sep = 1.5, fill, draw] () at (H) {};
\end{tikzpicture}}\hspace{1.5cm}
\subcaptionbox{\label{fig:subfig:b--}}
{\begin{tikzpicture}[scale = 0.8]
\coordinate (A) at (30:1);
\coordinate (B) at (150:1);
\coordinate (C) at (225:1);
\coordinate (D) at (-45:1);
\coordinate (H) at (90:1.414);
\coordinate (X) at (60:1.4);
\coordinate (Y) at (120:1.4);
\coordinate (T) at ($(H)!(A)!(X)$);
\coordinate (Z) at ($(A)!2!(T)$);
\draw (A)--(X)--(Z)--(H)--(Y)--(B)--(C)--(D)--cycle;
\draw (A)--(H)--(B);
\draw (H)--(X);
\node[circle, inner sep = 1.5, fill, draw] () at (A) {};
\node[circle, inner sep = 1.5, fill, draw] () at (B) {};
\node[circle, inner sep = 1.5, fill, draw] () at (C) {};
\node[circle, inner sep = 1.5, fill, draw] () at (D) {};
\node[circle, inner sep = 1.5, fill, draw] () at (H) {};
\node[circle, inner sep = 1.5, fill, draw] () at (X) {};
\node[circle, inner sep = 1.5, fill, draw] () at (Y) {};
\node[circle, inner sep = 1.5, fill, draw] () at (Z) {};
\end{tikzpicture}}\hspace{1.5cm}
\subcaptionbox{\label{fig:subfig:c--}}
{\begin{tikzpicture}[scale = 0.8]
\coordinate (A) at (30:1);
\coordinate (B) at (150:1);
\coordinate (C) at (225:1);
\coordinate (D) at (-45:1);
\coordinate (H) at (90:1.414);
\coordinate (X) at (60:1.4);
\coordinate (Y) at (120:1.4);
\coordinate (T) at ($(A)!(H)!(X)$);
\coordinate (Z) at ($(H)!2!(T)$);
\draw (A)--(Z)--(X)--(H)--(Y)--(B)--(C)--(D)--cycle;
\draw (X)--(A)--(H)--(B);
\node[circle, inner sep = 1.5, fill, draw] () at (A) {};
\node[circle, inner sep = 1.5, fill, draw] () at (B) {};
\node[circle, inner sep = 1.5, fill, draw] () at (C) {};
\node[circle, inner sep = 1.5, fill, draw] () at (D) {};
\node[circle, inner sep = 1.5, fill, draw] () at (H) {};
\node[circle, inner sep = 1.5, fill, draw] () at (X) {};
\node[circle, inner sep = 1.5, fill, draw] () at (Y) {};
\node[circle, inner sep = 1.5, fill, draw] () at (Z) {};
\end{tikzpicture}}
\caption{Forbidden configurations in \autoref{MRTHREE}.}
\label{E}
\end{figure}
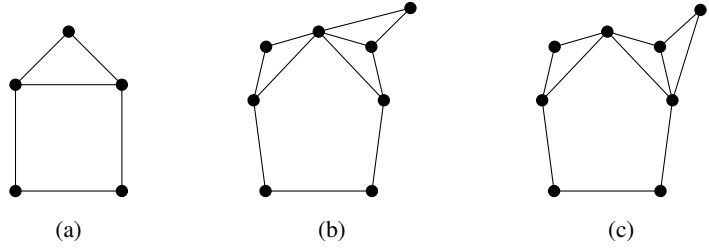

Note that there are mutually adjacent $3$-, $4$- and $5$-cycles in each configuration of \autoref{E}. Furthermore, \autoref{fig:subfig:a-} is the same with \autoref{fig:subfig:a--}, while \autoref{fig:subfig:b-} is a subgraph of \autoref{fig:subfig:b--} and \autoref{fig:subfig:c--}, so the class of planar graphs without the configurations in \autoref{E} is a larger class, thus they are forbidden in the next theorem. 
\begin{restatable}[Li and Wang \cite{Li2019}]{theorem}{MRTHREE}\label{MRTHREE}
Let $G$ be a planar graph without subgraphs isomorphic to the configurations in \autoref{E}. Let $H$ be a cover of $G$ and $f$ be a function from $V(H)$ to $\{0, 1, 2\}$. If $f(v, 1) + f(v, 2) + \dots + f(v, s) \geq 4$ for each $v \in V(G)$, then $H$ has a strictly $f$-degenerate transversal. 
\end{restatable}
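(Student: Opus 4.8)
The plan is to mimic the now-standard discharging strategy for sparse planar graphs, exploiting that the forbidden configurations of \autoref{E} are precisely the obstructions needed to carry it through, and to reduce everything to the structural results of \autoref{sec:MINIMAL} (in particular \autoref{L} and the theorem following it, together with \autoref{THM-5.1} for the strictly $3$-degenerate fallback).

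First I would take a counterexample $(H,f)$ with $|V(G)|$ minimum, so that $(H,f)$ is a minimal non-strictly $f$-degenerate pair; by \autoref{L}, $G$ is connected and $f(v,1)+\dots+f(v,\kappa)\le\deg_G(v)$ for each $v$, hence (using the hypothesis $f(v,1)+\dots+f(v,\kappa)\ge 4$) every vertex of $G$ has degree at least $4$, and moreover $V(G)\subseteq\mathcal D$. The second structural theorem of \autoref{sec:MINIMAL} then says that for every $2$-connected subgraph $F$ of $G$, the induced graph $G[V(F)]$ is a cycle, a complete graph, or satisfies $\deg_{G[V(F)]}(v)\le\max_q f(v,q)$ for each $v$; since $f$ maps into $\{0,1,2\}$, the last alternative forces $\deg_{G[V(F)]}(v)\le 2$, i.e.\ $G[V(F)]$ is itself a cycle. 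Because $G$ has minimum degree $\ge 4$, no block of $G$ can be a cycle, and $K_5$ (the only complete graph compatible with $\delta\ge 4$ on few vertices, and $K_p$ for $p\ge 6$ is non-planar) must be ruled out by a short direct argument: $K_5$ is non-planar, so in fact $G$ contains no such $2$-connected subgraph at all with all vertices of degree exactly $4$ — this is where I would be careful. The upshot I want is a forbidden-substructure list: $G$ has $\delta(G)\ge 4$, is planar, and contains none of the configurations in \autoref{E}.

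Next comes the discharging. Embed $G$ in the plane; assign to each vertex $v$ the charge $\deg_G(v)-4$ and to each face $\phi$ the charge $\deg(\phi)-4$, so the total is $-8$ by Euler's formula. Vertices of degree $4$ and $4$-faces have charge $0$; the only negative charges come from $3$-faces (charge $-1$ each), and positive charge sits on high-degree vertices and long faces. The discharging rule is the usual one: every $3$-face draws $\tfrac13$ from each incident vertex (or a refined rule distinguishing $4$-vertices from larger ones). I would then verify the local structure: since $G$ omits the configurations of \autoref{E}, a $4$-vertex cannot be surrounded by too many mutually adjacent short faces, so after discharging every vertex retains nonnegative charge and every face has nonnegative charge, contradicting the total $-8$. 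The configurations in \autoref{E} are engineered exactly so that a $4$-vertex incident to three triangles that chain together in the proscribed way is forbidden, which is what makes the final charge count balance.

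The main obstacle I expect is not the discharging bookkeeping but the passage from "$(H,f)$ is a minimal bad pair" to a clean combinatorial statement about $G$ alone. Specifically, translating the second theorem of \autoref{sec:MINIMAL} into "$G$ has no $2$-connected subgraph inducing a small clique or cycle with all vertices in $\mathcal D$" requires handling the $\widetilde{K_p}$ and monoblock alternatives of \autoref{constructible} in the cover, and then observing that $f\in\{0,1,2\}$ collapses these to manageable cases — essentially $G[V(F)]$ being $C_3$, $C_4$, $C_5$, or $K_5$, all of which are then either non-planar or forbidden by \autoref{E} once one knows $\delta(G)\ge 4$. Once that reduction is in hand the rest is a routine, if slightly tedious, discharging argument of the type already used in the proofs of \autoref{MRONEPLANAR} and \autoref{MRONE}, and I would model the face-counting closely on those.
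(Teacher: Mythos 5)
The paper does not actually prove \autoref{MRTHREE}: it is imported verbatim from Li and Wang \cite{Li2019} and listed among the applications whose proofs are omitted. So your proposal has to stand on its own, and as written it has two genuine gaps. First, you misapply the Gallai-type theorem of \autoref{sec:MINIMAL}. That theorem says that \emph{if} $F$ is a $2$-connected subgraph with $V(F)\subseteq\mathcal{D}$, \emph{then} $G[V(F)]$ is a cycle or a complete graph (the third alternative collapsing, as you note, to a cycle when $f\le 2$); it does not say that no such $F$ exists. A $C_5$ or a $K_4$ consisting entirely of degree-$4$ vertices is perfectly compatible with $\delta(G)\ge 4$, planarity, and the absence of the configurations of \autoref{E} (each of which has at least five vertices, so none is contained in $K_4$), and your inference ``no block of $G$ can be a cycle, hence $G$ contains no such $2$-connected subgraph at all'' is a non sequitur: $G[V(F)]$ being a cycle says nothing about blocks of $G$, since the vertices of $F$ have further neighbours outside $V(F)$. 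These clusters of low vertices are precisely the configurations a real proof must show reducible. (Also, $V(G)\subseteq\mathcal{D}$ is false in general: by \autoref{L} a vertex lies in $\mathcal{D}$ only when $\deg_G(v)=\sum_q f(v,q)$, so e.g.\ a degree-$6$ vertex with $\sum_q f(v,q)=4$ is outside $\mathcal{D}$.)

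Second, the discharging you describe cannot balance. With $\mu(v)=\deg_G(v)-4$ a $4$-vertex carries zero charge and cannot send $\tfrac13$ to each incident $3$-face, so the rule you state fails at once; the $-1$ on each $3$-face must instead be paid by $5^{+}$-faces and $5^{+}$-vertices. In the Sittitrai--Nakprasit setting (no $4$-cycle adjacent to a $3$-cycle) this works because $3$-faces are pairwise non-adjacent and each is surrounded by $5^{+}$-faces; here two triangles sharing an edge are \emph{not} forbidden (that subgraph has only four vertices), so $3$-faces can cluster and the charge-routing analysis is substantially harder --- it is the entire content of the theorem, not ``routine bookkeeping.'' Your proposal asserts that ``the configurations in \autoref{E} are engineered exactly so that \dots the final charge count balances'' without exhibiting a rule for which this is true, so the proof is not there yet.
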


The following \autoref{SN1}--\ref{NO4} are direct consequences of \autoref{MRTHREE}. 
\begin{theorem}[Sittitrai and Nakprasit \cite{MR4345150}]\label{SN1}
Let $G$ be a planar graph without $4$-cycles adjacent to $3$-cycles. Let $H$ be a cover of $G$ and $f$ be a function from $V(H)$ to $\{0, 1, 2\}$. If $f(v, 1) + f(v, 2) + \dots + f(v, s) \geq 4$ for each $v \in V(G)$, then $H$ has a strictly $f$-degenerate transversal. 
\end{theorem}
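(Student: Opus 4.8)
The plan is to deduce \autoref{SN1} directly from \autoref{MRTHREE} by checking that the planar graphs excluded in \autoref{SN1} form a subclass of those excluded in \autoref{MRTHREE}. The point is contained in the sentence immediately preceding \autoref{MRTHREE}: each of the three configurations in \autoref{E} contains mutually adjacent $3$-, $4$- and $5$-cycles, so in particular each one contains a triangle and a $4$-cycle sharing an edge. First I would make this explicit for each picture. For configuration~(a) it is transparent: the triangle through the apex vertex and one base edge, and the quadrilateral through the four base vertices, share that base edge. For configurations~(b) and~(c) one reads off from the drawing a triangle and a $4$-cycle sharing an edge in the same way (e.g.\ in~(b) the triangle $AHX$ and the $4$-cycle $AXZH$ share the edge $AH$, using the labels of the \texttt{tikzpicture}); no computation beyond tracing the incidences in the figure is needed.

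Granting this, the main argument is a one-line logical implication. Suppose $G$ is a planar graph containing no $4$-cycle adjacent (edge-sharing) to a $3$-cycle. If $G$ had a subgraph isomorphic to one of the configurations of \autoref{E}, then $G$ would contain the corresponding edge-sharing $3$-cycle/$4$-cycle pair, contradicting the hypothesis; hence $G$ contains no subgraph isomorphic to any configuration in \autoref{E}. The remaining hypotheses of \autoref{MRTHREE} — that $G$ is planar, that $f$ maps $V(H)$ to $\{0,1,2\}$, and that $f(v,1)+f(v,2)+\dots+f(v,\kappa)\ge 4$ for each $v\in V(G)$ — are exactly the ones assumed in \autoref{SN1}. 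So \autoref{MRTHREE} applies to $(H,f)$ and produces a strictly $f$-degenerate transversal of $H$, which is the desired conclusion.

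I do not expect any genuine obstacle here: the only step with content is the containment check, i.e.\ exhibiting an edge-sharing $3$-cycle and $4$-cycle inside each of the three pictures of \autoref{E}, and that is a finite inspection. This is why \autoref{SN1} is listed as a direct consequence of \autoref{MRTHREE}; the proof can be written in a couple of sentences once the figures are examined.
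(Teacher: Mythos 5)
Your proposal is correct and matches the paper's intent exactly: the paper gives no separate proof, stating only that \autoref{SN1} is a direct consequence of \autoref{MRTHREE}, and the sentence preceding \autoref{MRTHREE} already records that each configuration in \autoref{E} contains mutually adjacent $3$-, $4$- and $5$-cycles, which is precisely the containment check you carry out.
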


\begin{theorem}[Sittitrai and Nakprasit \cite{MR4345150}]\label{SN2}
Let $G$ be a planar graph without $4$-cycles adjacent to $3$-cycles. If $H$ is a cover of $G$ and $s \geq 2$, then $H$ has a strictly $2$-degenerate transversal. 
\end{theorem}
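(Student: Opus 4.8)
The plan is to obtain \autoref{SN2} as an immediate corollary of \autoref{MRTHREE}. The first step is a class-inclusion observation: every configuration drawn in \autoref{E} contains a $3$-cycle and a $4$-cycle sharing an edge, so a graph that has no $4$-cycle adjacent to a $3$-cycle contains none of those configurations as a subgraph. Consequently, if $G$ is planar with no $4$-cycle adjacent to a $3$-cycle, then \autoref{MRTHREE} is available for $G$ together with any valuation $f \colon V(H) \to \{0,1,2\}$ meeting its degree-sum hypothesis.

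The second step is to choose $f$. Using $\kappa \geq 2$, set $f(v,1) = f(v,2) = 2$ for every $v \in V(G)$ and $f(v,q) = 0$ for $3 \leq q \leq \kappa$. Then $f(v,1)+f(v,2)+\dots+f(v,\kappa) = 4$ for each $v \in V(G)$, so \autoref{MRTHREE} yields a strictly $f$-degenerate transversal $R$ of $H$.

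The third step is to upgrade ``strictly $f$-degenerate'' to ``strictly $2$-degenerate''. Because $f(x) \leq 2$ for every $x \in V(H)$, every nonempty subgraph $\Gamma$ of $H[R]$ has a vertex $x$ with $\deg_{\Gamma}(x) < f(x) \leq 2$, hence $\deg_{\Gamma}(x) \leq 1$; thus $H[R]$ is strictly $2$-degenerate, that is, a forest. Therefore $R$ is a strictly $2$-degenerate transversal of $H$, as required.

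There is no substantive obstacle here; the only points needing a word of justification are the inspection of \autoref{E} in the first step (checking that each depicted configuration really does contain a $3$-cycle adjacent to a $4$-cycle, which is asserted just before \autoref{MRTHREE}) and the use of the hypothesis $\kappa \geq 2$, which is precisely what allows two fibres to receive the value $2$. The remaining deduction is routine.
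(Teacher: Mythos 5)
Your proposal is correct and follows exactly the route the paper intends: it states that \autoref{SN2} is a direct consequence of \autoref{MRTHREE} (via the observation that every configuration in \autoref{E} contains a $4$-cycle adjacent to a $3$-cycle), and you supply the routine details — setting $f(v,1)=f(v,2)=2$ and $f(v,q)=0$ otherwise, then noting that strictly $f$-degenerate with $f\leq 2$ gives strictly $2$-degenerate. No gaps.
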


When we give an ID-cover, \autoref{SN1} implies the following results by Borodin and Ivanova \cite{MR2566928}. 
\begin{theorem}[Borodin and Ivanova \cite{MR2566928}]
Let $G$ be a planar graph without $4$-cycles adjacent to $3$-cycles. If $f_{1}(v) + f_{2}(v) + \dots + f_{s}(v)  \geq 4$ for each $v \in V(G)$, and $f_{i}(v) \in \{0, 1, 2\}$ for each $v \in V(G)$ and $i \in [s]$, then $G$ is $(f_{1}, f_{2}, \dots, f_{s})$-partitionable. 
\end{theorem}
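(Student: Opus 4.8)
The plan is to obtain this statement as an immediate corollary of \autoref{SN1}, combined with the equivalence between strictly $f$-degenerate transversals of ID-covers and $(f_{1}, f_{2}, \dots, f_{\kappa})$-partitions recorded in the Proposition of Section~\ref{sec:applications}'s predecessor (the Proposition relating ID-covers to $(f_{1}, f_{2}, \dots, f_{\kappa})$-partitionability). The idea is simply to package the whole sequence of functions into a single valued cover, apply the transversal result, and then read the partition off the resulting transversal.

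Concretely, I would first form the ID-cover $H$ of $G$: put $X_{v} = \{(v, 1), \dots, (v, \kappa)\}$ for each $v \in V(G)$, and join $(u, p)$ to $(v, q)$ exactly when $uv \in E(G)$ and $p = q$, so that each $\mathscr{M}_{uv}$ is the perfect matching $\{(u, p)(v, p) : p \in [\kappa]\}$ and $H$ is a genuine cover of $G$. Define $f \colon V(H) \to \{0, 1, 2\}$ by $f(v, p) = f_{p}(v)$; the hypothesis $f_{p}(v) \in \{0, 1, 2\}$ ensures the codomain really is $\{0, 1, 2\}$, and the hypothesis $f_{1}(v) + f_{2}(v) + \dots + f_{\kappa}(v) \geq 4$ translates verbatim into $f(v, 1) + f(v, 2) + \dots + f(v, \kappa) \geq 4$ for every $v \in V(G)$. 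Since $G$ is planar with no $4$-cycle adjacent to a $3$-cycle, all hypotheses of \autoref{SN1} are met, so $H$ admits a strictly $f$-degenerate transversal $R$.

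Finally I would invoke the Proposition relating ID-covers to $(f_{1}, f_{2}, \dots, f_{\kappa})$-partitions: with $f(v, p) = f_{p}(v)$ chosen exactly as in its statement, the existence of a strictly $f$-degenerate transversal of $H$ is equivalent to $G$ being $(f_{1}, f_{2}, \dots, f_{\kappa})$-partitionable, the partition being recovered by assigning $v$ to class $V_{p}$ whenever $(v, p) \in R$. I expect no real obstacle in this argument: the entire combinatorial weight sits inside \autoref{SN1} (itself a specialization of \autoref{MRTHREE}), and the only things to verify are the bookkeeping facts that the ID-cover is well defined, that the value restriction to $\{0, 1, 2\}$ is preserved under $f(v, p) = f_{p}(v)$, and that the degree-sum condition transfers — all of which hold by construction.
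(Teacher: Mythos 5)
Your proposal is correct and matches the paper's intended argument exactly: the paper derives this theorem by specializing \autoref{SN1} to an ID-cover with $f(v,i)=f_{i}(v)$ and then translating the strictly $f$-degenerate transversal back into an $(f_{1},\dots,f_{\kappa})$-partition via the Proposition from the introduction. All the bookkeeping you verify (codomain $\{0,1,2\}$, degree-sum transfer, well-definedness of the ID-cover) is precisely what is needed.
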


\begin{theorem}
\text{}
\begin{enumerate}[label = (\roman*)]
\item (Borodin and Ivanova \cite{MR2586624}) Every planar graph without $4$-cycles adjacent to $3$-cycles is $4$-choosable. 
\item (Borodin and Ivanova \cite{MR2566928}) Every planar graph without $4$-cycles adjacent to $3$-cycles has list vertex arboricity at most two. \qed
\end{enumerate}
\end{theorem}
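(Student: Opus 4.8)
The plan is to deduce both parts from \autoref{SN1} (itself a consequence of \autoref{MRTHREE}) by passing to a suitable ID-cover and translating via the ID-cover/partition correspondence established in the introduction, namely that a valued ID-cover $(H,f)$ with $f(v,i)=f_{i}(v)$ has a strictly $f$-degenerate transversal if and only if $G$ is $(f_{1},\dots,f_{\kappa})$-partitionable. For part (i), let $L$ be an arbitrary list $4$-assignment of $G$; since $G$ is finite only finitely many colours occur, so we may take the colour set to be $[\kappa]$. Form the valued ID-cover $(H,f)$ of $G$ on $\kappa$ parts by setting $f(v,i)=1$ if $i\in L(v)$ and $f(v,i)=0$ otherwise. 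Then $\sum_{i\in[\kappa]}f(v,i)=|L(v)|\ge 4$ for every $v$, and $f$ takes values in $\{0,1\}\subseteq\{0,1,2\}$, so \autoref{SN1} applies and $H$ has a strictly $f$-degenerate transversal. By the proposition this yields a partition $(V_{1},\dots,V_{\kappa})$ of $V(G)$ with each $G[V_{i}]$ strictly $f_{i}$-degenerate, $f_{i}(v)=f(v,i)$. A vertex cannot lie in a class where its $f_{i}$-value is $0$, so $v\in V_{i}$ forces $i\in L(v)$; and $G[V_{i}]$ strictly $1$-degenerate just means $V_{i}$ is independent in $G$. Assigning to each $v$ the index of its class is therefore a proper $L$-colouring, so $G$ is $4$-choosable.

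For part (ii) the argument is identical with ``$1$'' replaced by ``$2$''. Given a list $2$-assignment $L$, build the ID-cover with $f(v,i)=2$ if $i\in L(v)$ and $f(v,i)=0$ otherwise; now $\sum_{i}f(v,i)=2|L(v)|\ge 4$ and the values lie in $\{0,2\}\subseteq\{0,1,2\}$, so \autoref{SN1} again gives a strictly $f$-degenerate transversal and hence a partition $(V_{1},\dots,V_{\kappa})$ with $v\in V_{i}$ implying $i\in L(v)$, and each $G[V_{i}]$ strictly $2$-degenerate, i.e.\ a forest. This is precisely an $L$-forested-colouring, so $\mathrm{lva}(G)\le 2$.

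I do not anticipate a genuine obstacle here: all the real difficulty is contained in \autoref{MRTHREE}/\autoref{SN1}. The only points requiring care are routine bookkeeping — checking that the chosen $f$ lands in $\{0,1,2\}$, that the degree-sum hypothesis of \autoref{SN1} is exactly the list-size hypothesis ($|L(v)|\ge 4$, resp.\ $2|L(v)|\ge 4$), recalling that strictly $1$-degenerate means edgeless while strictly $2$-degenerate means a forest, and keeping in mind that ``$4$-choosable'' and ``$\mathrm{lva}\le 2$'' are quantified over all list assignments, so the cover must be rebuilt for each given $L$.
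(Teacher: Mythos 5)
Your proposal is correct and is exactly the argument the paper intends (the paper marks this theorem \qed as an immediate consequence of \autoref{SN1} via the ID-cover/partition correspondence, with $f_i(v)\in\{0,1\}$ encoding list coloring and $f_i(v)\in\{0,2\}$ encoding $L$-forested-coloring). The bookkeeping you carry out — the degree-sum hypothesis matching $|L(v)|\ge 4$ resp.\ $2|L(v)|\ge 4$, and strictly $1$-/$2$-degenerate meaning independent/forest — is precisely what the paper leaves implicit.
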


\begin{theorem}[Raspaud and Wang \cite{MR2408378}]\label{NO4}
Every planar graph without $4$-cycles has vertex arboricity at most two. 
\end{theorem}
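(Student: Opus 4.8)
The plan is to derive \autoref{NO4} from \autoref{MRTHREE} by choosing a suitable cover. Saying $\mathrm{va}(G) \leq 2$ means that $V(G)$ can be split into two parts each inducing a forest; since a graph is strictly $2$-degenerate exactly when it is a forest, this is precisely the statement that $G$ is $(f_{1}, f_{2})$-partitionable with $f_{1} \equiv f_{2} \equiv 2$. By the Proposition of the introduction relating ID-covers and $(f_{1}, \dots, f_{\kappa})$-partitions, this is equivalent to the existence of a strictly $f$-degenerate transversal in the valued ID-cover $(H, f)$ of $G$ with $\kappa = 2$ and $f(v, 1) = f(v, 2) = 2$ for each $v \in V(G)$. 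So it is enough to produce such a transversal.

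To apply \autoref{MRTHREE} I would first verify that a $4$-cycle-free planar graph contains no subgraph isomorphic to any configuration in \autoref{E}. This is immediate: each of the three configurations in \autoref{E} contains a $4$-cycle (each displays mutually adjacent $3$-, $4$- and $5$-cycles), so none of them can occur as a subgraph of a graph with no $4$-cycle. Granting this, $H$ is a cover of $G$, the function $f$ takes values in $\{0, 1, 2\}$, and $f(v, 1) + f(v, 2) = 4 \geq 4$ for every $v$, so \autoref{MRTHREE} guarantees that $H$ has a strictly $f$-degenerate transversal.

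It then remains to translate back: by the Proposition, such a transversal yields a partition of $V(G)$ into two sets $V_{1}, V_{2}$ with each $G[V_{i}]$ strictly $2$-degenerate, hence a forest, so $\mathrm{va}(G) \leq 2$.

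I expect no real obstacle here: the substance is entirely contained in \autoref{MRTHREE}, and the only thing to check by hand is the elementary observation that every configuration in \autoref{E} contains a $4$-cycle. A minor point worth noting is that both \autoref{MRTHREE} and the Proposition are stated without a connectivity assumption, so there is no need to pass to connected components; the degenerate cases (no vertices, a single vertex) are trivial in any event.
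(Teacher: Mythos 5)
Your derivation is correct and is exactly the route the paper intends: it states that \autoref{NO4} is a direct consequence of \autoref{MRTHREE}, and your argument (take the ID-cover with $\kappa=2$ and $f\equiv 2$, note that every configuration in \autoref{E} contains a $4$-cycle, apply \autoref{MRTHREE}, and translate back via the Proposition) simply spells out that implication. No gaps.
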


For an embedded toroidal graph $G$, 
\begin{equation}\label{TO}
\sum_{v \in V(G)} (\deg(v) - 6) + \sum_{f \in F(G)} (2\deg(f) - 6) = 0. \tag{$\ast\ast$}
\end{equation}
Then every toroidal graph is strictly $7$-degenerate. 
\begin{theorem}\label{TT}
Let $G$ be a toroidal graph and $(H, f)$ be a valued cover. 
\begin{enumerate}[label = (\roman*)]
\item If $f(v, 1) + f(v, 2) + \dots + f(v, s) \geq 7$ for each $v \in V(G)$, then $H$ has a strictly $f$-degenerate transversal. 
\item If $f(v, 1) + f(v, 2) + \dots + f(v, s) \geq 6$ for each $v \in V(G)$, $K_{7}$ is not a subgraph of $G$ and $(H_{S}, f)$ is not a monoblock whenever $G[S]$ is a 2-connected $6$-regular graph, then $H$ has a strictly $f$-degenerate transversal. 
\end{enumerate}
\end{theorem}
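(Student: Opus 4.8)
The two parts are of quite different character, so I would treat them separately. Part~(i) is essentially a corollary of earlier results: \eqref{TO} already gives that every toroidal graph is strictly $7$-degenerate, so \autoref{THM-5.1} applied with $m=7$ immediately produces a strictly $f$-degenerate transversal whenever $f(v,1)+f(v,2)+\dots+f(v,\kappa)\ge 7$ for every $v\in V(G)$. No new argument is needed.

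For part~(ii) I would argue by contradiction, choosing a counterexample $(H,f)$ that minimises $|V(G)|$. Then $G$ is connected and $(H,f)$ is a minimal non-strictly $f$-degenerate pair, so \autoref{L} gives $f(v,1)+\dots+f(v,\kappa)\le \deg_G(v)$ for each $v$; combined with the hypothesis this forces $\delta(G)\ge 6$. A simple planar graph always has a vertex of degree at most $5$, so $G$ is non-planar; being toroidal it then has genus $1$ and admits a $2$-cell embedding on the torus, which makes \eqref{TO} available. In that identity every summand $\deg(v)-6$ and every summand $2\deg(\varphi)-6$ (over faces $\varphi$) is nonnegative, so all of them vanish: $G$ is $6$-regular and every face is a triangle. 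In particular $f(v,1)+\dots+f(v,\kappa)=\deg_G(v)=6$ for each $v$, and $G$ is a $6$-regular triangulation of the torus. Since the link of every vertex in a simple triangulation of a closed surface is a cycle, $G-v$ is connected for each $v$, so $G$ is $2$-connected.

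Now \autoref{MR} applies: $G$ is connected, the degree condition holds (with equality), and $H$ has no strictly $f$-degenerate transversal, hence $(H,f)$ is constructible. By \autoref{constructible} a constructible cover is either a building cover or is obtained by identifying two strictly smaller constructible covers at a vertex; the latter would make that vertex a cut vertex of $G$ (a one-vertex cover is never constructible, so both pieces are strictly smaller), contradicting $2$-connectedness. Thus $(H,f)$ is a building cover. As $G$ is $6$-regular it is not a cycle, and it is not a complete graph $K_p$ with $p\ne 7$, while the assumption $K_7\not\subseteq G$ rules out $G=K_7$; so the only remaining option is that $(H,f)$ is a monoblock. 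But with $S=V(G)$ the graph $G[S]=G$ is a $2$-connected $6$-regular graph, so the hypothesis forces $(H_S,f)=(H,f)$ to be non-monoblock — a contradiction, which proves~(ii).

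Regarding difficulty: part~(i) is immediate, and in part~(ii) the points that deserve care are (a) legitimising the use of \eqref{TO}, namely that $G$ really $2$-cell embeds on the torus (handled by the planar minimum-degree bound), (b) the fact that a $6$-regular toroidal triangulation is $2$-connected, and (c) reading off from the proof of \autoref{MR} together with \autoref{constructible} that a constructible cover over a $2$-connected graph on at least two vertices is a building cover, followed by the short check that excludes cycles and every complete graph other than $K_7$. The two extra hypotheses of~(ii) are exactly what is needed to eliminate the two building-cover obstructions — the $K_7$-cover and the monoblock — that survive the discharging step.
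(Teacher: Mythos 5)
Your proof is correct and follows essentially the same route as the paper: part (i) via strict $7$-degeneracy and \autoref{THM-5.1}, and part (ii) via a minimum counterexample, \autoref{L}, the Euler identity \eqref{TO} forcing a $6$-regular triangulation, and \autoref{MR}. You merely supply details the paper leaves implicit (the $2$-cell embedding, $2$-connectedness, and the explicit elimination of the building-cover cases), all of which check out.
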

\begin{proof}
(i) This follows from \autoref{THM-5.1} and the fact that $G$ is strictly $7$-degenerate. 

(ii) Suppose that $G$ is a counterexample to \autoref{TT} with minimum number of vertices. Note that $G$ is connected and $(H, f)$ is a minimal non-strictly $f$-degenerate pair. By \autoref{L}, the minimum degree of $G$ is at least $6$, this together with \eqref{TO} implies that $G$ is $6$-regular and every face is a $3$-face. Hence, $G$ is a $2$-connected $6$-regular graph, which contradicts \autoref{MR} and the constructions of the building covers. 
\end{proof}
The following result is a direct consequence of \autoref{TT} and it answers a question posed in \cite{MR3508765}. 
\begin{theorem}[Wang, Chen and Wang \cite{MR3862632}]
If $G$ is a toroidal graph, then the list vertex arboricity is at most four. Moreover, the list vertex arboricity is four if and only if $K_{7}$ is a subgraph of $G$. 
\end{theorem}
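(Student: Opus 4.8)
The plan is to deduce the statement from \autoref{TT} by passing between $L$-forested-colorings of $G$ and strictly $f$-degenerate transversals of ID-covers. Given a list $\kappa$-assignment $L$ of $G$, I would form the valued ID-cover $(H, f)$ of $G$ with $\kappa$ fibers by setting $f(v, i) = 2$ when the color $i$ lies in $L(v)$ and $f(v, i) = 0$ otherwise. Since a graph is strictly $2$-degenerate exactly when it is a forest, the Proposition relating strictly $f$-degenerate transversals of ID-covers to $(f_{1}, f_{2}, \dots, f_{\kappa})$-partitions (applied with $f_{i}(v) = 2$ for $i \in L(v)$ and $f_{i}(v) = 0$ otherwise) shows that $G$ has an $L$-forested-coloring if and only if $(H, f)$ has a strictly $f$-degenerate transversal. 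In this correspondence $\sum_{i} f(v, i) = 2|L(v)|$ for every $v \in V(G)$.

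For the bound $\mathrm{lva}(G) \le 4$, I would take an arbitrary list $4$-assignment $L$. The associated cover satisfies $\sum_{i} f(v, i) = 2|L(v)| \ge 8 \ge 7$ for each $v$, so \autoref{TT}(i) furnishes a strictly $f$-degenerate transversal of $H$, that is, an $L$-forested-coloring of $G$. Hence $\mathrm{lva}(G) \le 4$.

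For the two directions of the equivalence I would argue separately. If $K_{7} \subseteq G$, then since a forest contained in $K_{7}$ has at most two vertices, the constant list $3$-assignment $L \equiv \{1, 2, 3\}$ admits no $L$-forested-coloring of $K_{7}$, hence none of $G$ (a forested-coloring of $G$ would restrict to one of $K_{7}$); so $\mathrm{lva}(G) \ge 4$, and combined with the previous paragraph $\mathrm{lva}(G) = 4$. Conversely, assume $K_{7} \not\subseteq G$ and let $L$ be any list $3$-assignment; I would apply \autoref{TT}(ii) to the corresponding ID-cover $(H, f)$. The degree hypothesis holds since $\sum_{i} f(v, i) = 2|L(v)| \ge 6$, and $K_{7}$ is not a subgraph of $G$ by assumption, so it only remains to verify that $(H_{S}, f)$ is not a monoblock whenever $G[S]$ is a $2$-connected $6$-regular graph. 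This is immediate: the kernel of $H_{S}$ retains every vertex $(v, i)$ with $i \in L(v)$, hence contains at least $3|S|$ vertices, strictly more than the $|S| = |V(G[S])|$ vertices of $G[S]$, so the kernel cannot be isomorphic to $G[S]$. Thus \autoref{TT}(ii) provides a strictly $f$-degenerate transversal of $H$, that is, an $L$-forested-coloring of $G$, and $\mathrm{lva}(G) \le 3$.

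I do not anticipate a genuine obstacle, since the result is billed as a direct consequence of \autoref{TT}; the one step that needs a little care is the monoblock check in the last paragraph, where one must remember that for ID-covers with $f$ valued in $\{0, 2\}$ the kernel keeps at least three vertices in each fiber, and it is exactly this size mismatch that forces $(H_{S}, f)$ not to be a monoblock.
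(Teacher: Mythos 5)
Your proposal is correct and follows the route the paper intends (the paper omits the proof precisely because it is this direct consequence of \autoref{TT} via the ID-cover translation with $f$ valued in $\{0,2\}$): part (i) gives $\mathrm{lva}\le 4$, part (ii) gives $\mathrm{lva}\le 3$ when $K_7\not\subseteq G$, and $K_7$ forces $\mathrm{lva}\ge 4$ since with three classes each induces a complete subgraph of $K_7$ on at most two vertices. The only nitpick is the phrase ``a forest contained in $K_7$ has at most two vertices,'' which should read \emph{induced} forest, since that is what the color classes of a forested-coloring yield on $V(K_7)$.
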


Chartrand and Kronk \cite{MR0239996} showed that the vertex arboricity of a strictly $m$-degenerate graph is at most $\lceil\frac{m}{2}\rceil$. Xue and Wu \cite{MR2947381} extended this result to the list context. 
\begin{theorem}[Xue and Wu \cite{MR2947381}]\label{K-D}
The list vertex arboricity of a strictly $m$-degenerate graph is at most $\lceil\frac{m}{2}\rceil$. 
\end{theorem}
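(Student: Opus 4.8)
The plan is to deduce this from \autoref{THM-5.1} via the ID-cover construction. Fix $k = \lceil m/2 \rceil$ and let $L$ be an arbitrary list $k$-assignment of $G$. Enumerate the colors that occur, $\{c_{1}, c_{2}, \dots, c_{\kappa}\} = \bigcup_{v \in V(G)} L(v)$, let $H$ be the ID-cover of $G$ on these $\kappa$ colors, and define a function $f$ from $V(H)$ to $\mathbb{Z}^{*}$ by $f(v, p) = 2$ if $c_{p} \in L(v)$ and $f(v, p) = 0$ otherwise. Then $\sum_{p=1}^{\kappa} f(v, p) = 2\,|L(v)| \geq 2k \geq m$ for every $v \in V(G)$, so the hypothesis of \autoref{THM-5.1} is satisfied.

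Applying \autoref{THM-5.1} yields a strictly $f$-degenerate transversal $R$ of $H$. For each $v$ let $(v, p_{v})$ be the unique vertex of $R$ in $X_{v}$; since $R$ is strictly $f$-degenerate we must have $f(v, p_{v}) > 0$, hence $f(v, p_{v}) = 2$ and $c_{p_{v}} \in L(v)$. Put $\phi(v) = c_{p_{v}}$, so $\phi$ is a (not necessarily proper) coloring of $G$ with $\phi(v) \in L(v)$. In the ID-cover, vertices $(u, p_{u})$ and $(v, p_{v})$ of $R$ are adjacent in $H$ exactly when $uv \in E(G)$ and $p_{u} = p_{v}$, that is, when $uv \in E(G)$ and $\phi(u) = \phi(v)$; thus $H[R]$ is the disjoint union of the subgraphs of $G$ induced by the color classes of $\phi$. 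Since every vertex of $R$ has $f$-value $2$, the transversal $R$ being strictly $f$-degenerate means precisely that $H[R]$ is strictly $2$-degenerate, that is, a forest. Hence every color class of $\phi$ induces a forest, so $\phi$ is an $L$-forested-coloring. As $L$ was an arbitrary list $k$-assignment, $\mathrm{lva}(G) \leq k = \lceil m/2 \rceil$.

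This is essentially a bookkeeping reduction and I do not foresee a genuine obstacle; the one point to check with care is that the $\{0, 2\}$-valued choice of $f$ faithfully encodes $L$-forested-coloring — it must simultaneously confine each color class to the vertices permitted by $L$ and force that class to induce a forest — which is exactly the correspondence between $\{0, 2\}$-valued variable-degeneracy partitions and $L$-forested-colorings recorded at the beginning of the paper (and formalized there by the ID-cover proposition). One could alternatively invoke \autoref{MR} or \autoref{SMR} on $H$ directly, but \autoref{THM-5.1} already supplies the existence statement needed here.
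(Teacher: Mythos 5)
Your proposal is correct and follows essentially the same route as the paper: encode the list assignment by a $\{0,2\}$-valued function on an ID-cover, note that the $f$-sums are at least $2\lceil m/2\rceil \geq m$, and apply \autoref{THM-5.1}. You simply spell out the bookkeeping (that the resulting strictly $f$-degenerate transversal decodes to an $L$-forested-coloring) that the paper leaves implicit via its earlier ID-cover proposition.
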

\begin{proof}
Let $f_{1}, f_{2}, \dots, f_{s}$ be functions from $V(G)$ to $\{0, 2\}$, and let $(H, f)$ be a valued ID-cover of $G$ with $f(v, 1) = f_{1}(v)$, $f(v, 2) = f_{2}(v), \dots, f(v, s) = f_{s}(v)$ for each $v \in V(G)$. Suppose that $f_{1}(v) + f_{2}(v) + \dots + f_{s}(v) \geq 2\lceil\frac{m}{2}\rceil$. This implies that $f(v, 1) + f(v, 2) + \dots + f(v, s) \geq 2\lceil\frac{m}{2}\rceil \geq m$ for each $v \in V(G)$. By \autoref{THM-5.1}, $H$ has a strictly $f$-degenerate transversal, which implies that the list vertex arboricity of $G$ is at most $\lceil\frac{m}{2}\rceil$. 
\end{proof}

By Euler's formula, every planar graph is strictly $6$-degenerate, thus we immediately obtain the following result. 
\begin{theorem}[Chartrand, Kronk and Wall \cite{MR0236049}]
Every planar graph has vertex arboricity at most three. 
\end{theorem}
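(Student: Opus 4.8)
The plan is to obtain this as an immediate corollary of \autoref{K-D}. First I would recall the standard consequence of Euler's formula that every planar graph is strictly $6$-degenerate: if $G$ is planar with at least one edge, then $|E(G)| \le 3|V(G)| - 6$, so $G$ has a vertex of degree at most $5$; since every subgraph of a planar graph is again planar, every nonempty subgraph of $G$ has a vertex of degree less than $6$, which is precisely the definition of strictly $6$-degenerate. This is exactly the sentence preceding the statement in the text.

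Next I would apply \autoref{K-D} with $m = 6$: the list vertex arboricity of any strictly $6$-degenerate graph is at most $\lceil 6/2 \rceil = 3$, hence $\mathrm{lva}(G) \le 3$ for every planar graph $G$. Finally, the ordinary vertex arboricity never exceeds the list vertex arboricity: taking the constant list assignment $L(v) = [\kappa]$ with $\kappa = \mathrm{lva}(G)$ turns an $L$-forested-coloring into a partition of $V(G)$ into $\mathrm{lva}(G)$ induced forests, so $\mathrm{va}(G) \le \mathrm{lva}(G) \le 3$, as desired.

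Since the argument is a short chain of known implications, I do not expect any genuine obstacle; the only step requiring an actual (one-line) argument rather than a citation is the strict $6$-degeneracy of planar graphs, and that follows at once from Euler's formula together with the fact that planarity is hereditary. Alternatively, one could bypass \autoref{K-D} entirely and argue directly as in its proof: take a valued ID-cover $(H, f)$ of $G$ with $\kappa = 3$ and $f \equiv 2$, so that $f(v, 1) + f(v, 2) + f(v, 3) = 6 \ge 6$ for each $v \in V(G)$, and apply \autoref{THM-5.1} to produce the strictly $f$-degenerate transversal, which corresponds to a partition of $V(G)$ into three induced forests.
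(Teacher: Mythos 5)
Your proposal is correct and follows exactly the route the paper intends: Euler's formula gives strict $6$-degeneracy of planar graphs, \autoref{K-D} with $m=6$ gives list vertex arboricity at most $3$, and ordinary vertex arboricity is bounded by the list version. The alternative you sketch via \autoref{THM-5.1} with an ID-cover and $f\equiv 2$ is just the proof of \autoref{K-D} unwound, so it is the same argument in substance.
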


Later, Chartrand and Kronk \cite{MR0239996} constructed an example of a planar graph with vertex arboricity three to show the bound is sharp. Note that every graph with maximum degree $\Delta$ is $\Delta$-degenerate, thus \autoref{K-D} implies the following result.

\begin{theorem}[Chartrand, Kronk and Wall \cite{MR0236049}, Chartrand and Kronk \cite{MR0239996}]
Every graph with maximum degree $\Delta$ has vertex arboricity at most $\lceil\frac{\Delta + 1}{2}\rceil$.
\end{theorem}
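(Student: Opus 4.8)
The plan is to obtain this as an immediate corollary of \autoref{K-D} (Xue and Wu), exactly as the sentence preceding the statement indicates. First I would record the degeneracy fact: if $G$ has maximum degree $\Delta$, then every nonempty subgraph of $G$ has a vertex of degree at most $\Delta$, so $G$ is $\Delta$-degenerate; by the equivalence noted in the introduction (a graph is strictly $\kappa$-degenerate if and only if it is $(\kappa-1)$-degenerate), $G$ is strictly $(\Delta+1)$-degenerate. Hence \autoref{K-D} applies with $m=\Delta+1$.

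Second, \autoref{K-D} then yields $\mathrm{lva}(G)\le\lceil(\Delta+1)/2\rceil$. Finally I would observe the trivial inequality $\mathrm{va}(G)\le\mathrm{lva}(G)$: taking the constant list assignment $L(v)=[\kappa]$ with $\kappa=\lceil(\Delta+1)/2\rceil$, an $L$-forested-coloring is precisely a partition of $V(G)$ into $\kappa$ classes each inducing a forest, so $\mathrm{va}(G)\le\kappa$. Combining the two bounds gives $\mathrm{va}(G)\le\lceil(\Delta+1)/2\rceil$, which is the claim.

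There is essentially no obstacle here; the only point requiring care is bookkeeping between the two ``degeneracy'' conventions --- remembering that maximum degree $\Delta$ gives strictly $(\Delta+1)$-degeneracy, not strictly $\Delta$-degeneracy --- so that the ceiling comes out as $\lceil(\Delta+1)/2\rceil$ rather than $\lceil\Delta/2\rceil$. One could alternatively bypass \autoref{K-D} and argue directly from \autoref{THM-5.1} (or from \autoref{MR}) applied to a suitable valued ID-cover with all $f_i(v)\in\{0,2\}$ and $\sum_i f_i(v)=2\lceil(\Delta+1)/2\rceil\ge\Delta+1$, but routing through \autoref{K-D} is the shortest path and matches the surrounding exposition, so that is the route I would take.
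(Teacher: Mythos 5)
Your proposal is correct and matches the paper's own (one-line) derivation: the paper likewise notes that a graph with maximum degree $\Delta$ is $\Delta$-degenerate, i.e.\ strictly $(\Delta+1)$-degenerate, and then invokes \autoref{K-D} with $m=\Delta+1$, with $\mathrm{va}(G)\le\mathrm{lva}(G)$ implicit. Your extra care about the strictly-$(\Delta+1)$-degenerate versus $\Delta$-degenerate bookkeeping is exactly the right point to check, and nothing further is needed.
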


Matsumoto \cite{MR1037427} showed that the linear vertex arboricity of $G$ is also at most $\lceil\frac{\Delta + 1}{2}\rceil$. These two results can be strengthened to the linear list vertex arboricity. 
\begin{theorem}
Every graph with maximum degree $\Delta$ has linear list vertex arboricity at most $\lceil\frac{\Delta + 1}{2}\rceil$.
\end{theorem}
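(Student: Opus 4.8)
The plan is to deduce the statement from the self-strengthened degree-type result, \autoref{SMR} (together with \autoref{FC}), applied to the valued cover built from a list assignment, with the function $f$ taken identically equal to $2$.

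First I would reduce the list statement to a statement about valued covers. Let $G$ be a graph with $\Delta(G) = \Delta$, put $\kappa = \lceil \tfrac{\Delta+1}{2}\rceil$, and let $L$ be an arbitrary list $\kappa$-assignment; after shrinking lists we may assume $|L(v)| = \kappa$ for every $v$. Form a valued cover $(H, f)$ of $G$ by fixing, for each $v$, a bijection $\pi_{v}\colon L(v) \to [\kappa]$, letting $\mathscr{M}_{uv} = \{\,(u, \pi_{u}(c))(v, \pi_{v}(c)) : c \in L(u) \cap L(v)\,\}$ for each $uv \in E(G)$, and setting $f(v, q) = 2$ for all $v$ and $q$. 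As already recorded in the introduction (cover/transversal is a generalization of list colouring), a strictly $f$-degenerate transversal $R$ of $H$ yields a colouring $\phi$ with $\phi(v) = \pi_{v}^{-1}(q_{v}) \in L(v)$, where $(v, q_{v})$ is the chosen vertex of $X_{v}$, and for each colour $c$ the induced subgraph $G[\phi^{-1}(c)]$ is isomorphic to $H[R_{c}]$, where $R_{c} = \{\,(v, \pi_{v}(c)) : v \in \phi^{-1}(c)\,\} \subseteq R$; moreover this isomorphism identifies $\deg_{G[\phi^{-1}(c)]}(v)$ with $\deg_{R_{c}}(v, \pi_{v}(c))$. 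Hence it suffices to produce a strictly $f$-degenerate transversal $R$ with $\deg_{R}(v, q) \le 2$ for every $(v, q) \in R$: then each $H[R_{c}]$, being a subgraph of the strictly $2$-degenerate graph $H[R]$, is a forest, and it has maximum degree at most $2$, i.e.\ it is a forest with maximum degree at most two.

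The numerical heart of the argument is that, for every $v \in V(G)$,
\[
\sum_{q=1}^{\kappa} f(v, q) = 2\kappa = 2\Bigl\lceil \tfrac{\Delta+1}{2}\Bigr\rceil \ge \Delta + 1 > \Delta \ge \deg_{G}(v).
\]
In particular, by \autoref{FC}, $(H, f)$ is not constructible, since constructibility would force the degree sum to equal $\deg_{G}(v)$ at every vertex. Applying \autoref{SMR} to each component of $G$ separately and taking the union of the resulting transversals, we get a strictly $f$-degenerate transversal $R$ of $H$ with $\deg_{R}(v, q) \le f(v, q) = 2$ everywhere, and by the previous paragraph this completes the proof that the linear list vertex arboricity is at most $\kappa = \lceil \tfrac{\Delta+1}{2}\rceil$. (Since the degree-sum inequality is strict, one may instead invoke \autoref{MSMR} directly, avoiding any mention of constructibility and even getting $\deg_{R}(v, q) \le 1$, so that every colour class is an induced matching; the theorem is then a special case.)

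There is no deep obstacle here: all the force is in the earlier results, and what remains is bookkeeping. The one step to carry out with care is the cover/list translation in the second paragraph—checking that the cover constructed from $L$ really does turn a bounded-cover-degree strictly $f$-degenerate transversal into an $L$-colouring whose colour classes induce forests of maximum degree at most two, in particular that cover-degree within $H[R]$ coincides with in-class degree within $G$. The auxiliary checks (the elementary inequality $2\lceil\tfrac{\Delta+1}{2}\rceil \ge \Delta+1$, non-constructibility, and the reduction to connected $G$ by working componentwise) are routine.
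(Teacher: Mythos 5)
Your proposal is correct and takes essentially the same route as the paper: encode the list assignment as a valued cover with $f$-values equal to $2$, note that $\sum_{q} f(v,q) \geq \Delta + 1 > \deg_{G}(v)$, and invoke the self-strengthened degree-type result. The paper deduces the theorem from the special case of \autoref{MSMR} (giving colour classes of maximum degree at most one); your main line via \autoref{SMR} plus the non-constructibility check gives maximum degree at most two, which still suffices, and your parenthetical remark about applying \autoref{MSMR} directly is exactly the paper's argument.
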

Actually, it is an immediate consequence of the following result, which is a special case of \autoref{MSMR}. 
\begin{theorem}
Let $G$ be a connected graph and $(H, f)$ be a valued cover of $G$. If $f$ is a function from $V(H)$ to $\{0, 1, 2\}$ and $f(v, 1) + f(v, 2) + \dots + f(v, s) > \deg_{G}(v)$ for each $v \in V(G)$, then $H$ has a strictly $f$-degenerate transversal $R$ such that the maximum degree of $H[R]$ is at most one. 
\end{theorem}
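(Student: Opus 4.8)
The plan is to deduce this statement as a direct specialization of \autoref{MSMR}, using only the numerical constraint that every $f$-value lies in $\{0,1,2\}$ to upgrade the pointwise degree bound into a global maximum-degree bound. Here $f$ is understood as a function on $V(H)$ with $f(v,q)\in\{0,1,2\}$ for every $(v,q)$, and the hypothesis is exactly the strict degree condition $f(v,1)+f(v,2)+\dots+f(v,\kappa) > \deg_{G}(v)$ for each $v\in V(G)$, which is precisely the hypothesis of \autoref{MSMR}.

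First I would apply \autoref{MSMR} to the valued cover $(H,f)$. Since $G$ is connected and the strict degree condition holds, \autoref{MSMR} produces a strictly $f$-degenerate transversal $R$ of $H$ satisfying the pointwise inequality $\deg_{R}(v,q) < f(v,q)$ for every $v\in V(G)$ and every $q\in[\kappa]$. This is the entire substantive input; no further structural analysis of $H$ or $G$ is required.

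Next I would restrict attention to the vertices actually selected by the transversal. For each $v\in V(G)$ let $(v,q_{v})$ denote the unique vertex of $X_{v}$ lying in $R$. Applying the inequality at this vertex gives $\deg_{R}(v,q_{v}) < f(v,q_{v})$; because the left side is nonnegative, this forces $f(v,q_{v})\geq 1$, and because $f(v,q_{v})\in\{0,1,2\}$ by hypothesis we have $f(v,q_{v})\leq 2$. Combining these, $\deg_{R}(v,q_{v}) < 2$, so $\deg_{H[R]}(v,q_{v})\leq 1$. Since every vertex of $H[R]$ is of the form $(v,q_{v})$ for some $v$, every vertex of $H[R]$ has degree at most one, whence the maximum degree of $H[R]$ is at most one, as claimed.

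There is essentially no obstacle to overcome, since all the work is carried by \autoref{MSMR}; the only point worth emphasizing is that the inequality need only be invoked at the chosen vertices $(v,q_{v})$, as the unselected vertices of each $X_{v}$ do not appear in $H[R]$ and their $f$-values play no role. In particular, a vertex with $f(v,q)=0$ can never be the selected one (its degree could not be strictly less than $0$), so the transversal automatically avoids such vertices, and the bound $f(v,q_{v})\leq 2$ is the single arithmetic fact needed to conclude.
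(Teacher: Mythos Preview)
Your proof is correct and matches the paper's approach exactly: the paper simply states that this theorem ``is a special case of \autoref{MSMR}'' without further elaboration, and your argument spells out precisely how that specialization works by invoking the pointwise bound $\deg_{R}(v,q_v) < f(v,q_v) \le 2$.
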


An analogue of Brooks' Theorem for vertex arboricity was obtained by Kronk and Mitchem \cite{MR0360334}. 
\begin{theorem}[Kronk and Mitchem \cite{MR0360334}]
Let $G$ be a connected graph with maximum degree $\Delta$. If it is neither a complete graph of odd order nor a cycle, then the vertex arboricity is at most $\lceil\frac{\Delta}{2} \rceil$. 
\end{theorem}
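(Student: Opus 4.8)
The plan is to pass to the language of strictly $f$-degenerate transversals and then invoke \autoref{MR}. Put $\kappa = \lceil \Delta/2 \rceil$, let $f_{1} \equiv f_{2} \equiv \dots \equiv f_{\kappa} \equiv 2$ on $V(G)$, and let $(H, f)$ be the valued ID-cover of $G$ determined by $f(v, i) = f_{i}(v) = 2$ for all $v \in V(G)$ and $i \in [\kappa]$. Since a graph is strictly $2$-degenerate exactly when it is a forest, the proposition relating ID-covers and $(f_{1}, \dots, f_{\kappa})$-partitions shows that $\mathrm{va}(G) \le \kappa$ is equivalent to $H$ having a strictly $f$-degenerate transversal, and $\sum_{i=1}^{\kappa} f(v, i) = 2\lceil \Delta/2 \rceil \ge \Delta \ge \deg_{G}(v)$ for every $v$, so both \autoref{GE} and \autoref{MR} are available.

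First I would dispose of the case $\Delta$ odd: then $\sum_{i} f(v, i) = \Delta + 1 > \Delta \ge \deg_{G}(v)$ for every $v$, so \autoref{GE} produces the required transversal and no exclusion is needed. So assume $\Delta$ is even; then $\kappa = \Delta/2$ and $\sum_{i} f(v, i) = \Delta$. By the ``if'' direction of \autoref{MR} it suffices to show that $(H, f)$ is non-constructible. Suppose it is constructible; then \autoref{FC} forces $\deg_{G}(v) = \sum_{i} f(v, i) = \Delta$ for all $v$, \ie $G$ is $\Delta$-regular.

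The heart of the argument is then to show that a connected $\Delta$-regular graph whose ID-cover with $f \equiv 2$ is constructible is either $K_{\Delta + 1}$ or a cycle, both of which are excluded from the statement ($K_{\Delta+1}$ has odd order because $\Delta$ is even, and cycles are excluded outright). I would argue by induction on the construction in \autoref{constructible}. If $G$ is $2$-connected, then $(H, f)$ is a building cover; since $f \equiv 2 > 0$ the kernel of $H$ is all of $H$, which is $\kappa$ vertex-disjoint copies of $G$. Of the four possibilities: the monoblock case would force $\kappa$ disjoint copies of a connected graph to be connected, hence $\kappa = 1$, $\Delta = 2$, and $G$ a cycle; case (ii) forces $G = K_{p}$ with $p - 1 = \Delta$, \ie $G = K_{\Delta+1}$; and cases (iii)--(iv) require $f \equiv 1$, contradicting $f \equiv 2$. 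If $G$ is not $2$-connected, I would examine an end-block $B$ of $G$ together with the corresponding leaf of the construction: every vertex of $B$ other than its cut vertex is non-cut in $G$, so it keeps $f$-value $2$ on each of the $\kappa$ copies and has degree $\Delta$ inside $B$. Consequently $B$ minus its cut vertex is nonempty and occurs on each of $\kappa \ge 2$ copies, excluding the monoblock case; a complete end-block would make its cut vertex have degree exceeding $\Delta$ in $G$, contradicting regularity and excluding case (ii); and cases (iii)--(iv) again clash with $f \equiv 2$. Hence $G$ is $2$-connected and equals $K_{\Delta+1}$ or a cycle, a contradiction. Therefore $(H,f)$ is non-constructible, \autoref{MR} yields a strictly $f$-degenerate transversal of $H$, and $\mathrm{va}(G) \le \lceil \Delta/2 \rceil$.

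The step I expect to be the main obstacle is the non-$2$-connected analysis: making precise that the leaves of the recursive construction correspond to (sub)blocks of $G$, that non-cut vertices of $G$ retain their original $f$-value $2$ throughout the construction, and hence that no end-block can satisfy any of the four building-cover conditions once $\Delta \ge 4$. Everything else is a routine application of \autoref{GE}, \autoref{FC} and \autoref{MR} once the dictionary to ID-covers is in place.
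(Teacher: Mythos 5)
Your proof is correct and follows essentially the same route the paper takes: the paper states this result without proof and obtains it as a consequence of the final theorem of Section 5, whose proof is exactly your skeleton (apply \autoref{MR}, use \autoref{FC} to force $\Delta$-regularity, then rule out each building-cover type via an end-block analysis), here specialized to the ID-cover with $f\equiv 2$. Your added details — dispatching odd $\Delta$ via \autoref{GE}, and the counting argument showing a kernel containing $\kappa\ge 2$ copies of a nonempty end-block cannot be a monoblock — are sound and in fact make the block analysis more explicit than the paper's own one-line assertion.
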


Borodin, Kostochka and Toft \cite{MR1743629} obtained the following Brooks' type result for linear list vertex arboricity. 
\begin{theorem}[Borodin, Kostochka and Toft \cite{MR1743629}]\label{LVA}
Let $G$ be a connected graph with maximum degree $\Delta$. If it is neither a complete graph of odd order nor a cycle, then the linear list vertex arboricity is at most $\lceil\frac{\Delta}{2} \rceil$. 
\end{theorem}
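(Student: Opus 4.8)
The plan is to derive \autoref{LVA} from the self-strengthened \autoref{SMR} via an ID-cover encoding. Put $\kappa := \lceil \Delta/2 \rceil$, fix an arbitrary list $\kappa$-assignment $L$ of $G$, let $m := \bigl|\bigcup_{v} L(v)\bigr|$, and relabel the colours as $[m]$. Build the valued ID-cover $(H, f)$ of $G$ on $X_{v} = \{(v, 1), \dots, (v, m)\}$ with $f(v, p) = 2$ if $p \in L(v)$ and $f(v, p) = 0$ otherwise. Then $\sum_{p} f(v, p) = 2|L(v)| \geq 2\kappa \geq \Delta \geq \deg_{G}(v)$ for every $v$, so the degree hypothesis of \autoref{SMR} is met. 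Assuming $(H, f)$ is non-constructible, \autoref{SMR} provides a strictly $f$-degenerate transversal $R$ with $\deg_{R}(v, p) \leq f(v, p)$ throughout; since a vertex with $f = 0$ can never lie in a strictly $f$-degenerate transversal, $R$ selects for each $v$ exactly one colour $c(v) \in L(v)$. For each class $V_{i} = \{v : c(v) = i\}$, the subgraph of $H[R]$ induced on the chosen copies of $V_{i}$ is isomorphic to $G[V_{i}]$ and is strictly $f$-degenerate with $f$ identically $2$ on it, hence a forest; moreover $\deg_{G[V_{i}]}(v) = \deg_{R}(v, i) \leq f(v, i) = 2$. So every class induces a linear forest, and as $L$ was arbitrary the linear list vertex arboricity of $G$ is at most $\kappa$.

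It remains to verify that $(H, f)$ is non-constructible. Suppose instead it is. By \autoref{FC}, $2|L(v)| = \sum_{p} f(v, p) = \deg_{G}(v) \leq \Delta \leq 2\kappa \leq 2|L(v)|$ for every $v$, so equality holds everywhere: $\Delta$ is even, $G$ is $\Delta$-regular, and $|L(v)| = \kappa$ for all $v$. A short analysis of the constructible decomposition of \autoref{constructible} now pins down $G$: the odd- and even-cycle building covers require $f \equiv 1$ on the kernel and are excluded by $f \in \{0, 2\}$; a monoblock (on any block of the decomposition) forces exactly one surviving copy per vertex, hence $\kappa = 1$, so $\Delta \leq 2$ and $G$ is a cycle; and the $\widetilde{K_{p}}$-type forces a block isomorphic to $K_{p}$ with $p - 1 = \Delta$ even, hence $p$ odd, and since a cut vertex of $K_{\Delta+1}$ would have degree greater than $\Delta$, $G$ has no cut vertex and equals $K_{\Delta+1}$, a complete graph of odd order. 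Each outcome contradicts the hypotheses on $G$; hence $(H, f)$ is non-constructible.

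The only step needing genuine care is the second paragraph: once the equality conditions collapse the problem, one must match the two structural exceptions in the statement — complete graphs of odd order and cycles — to exactly those building-cover types that survive the constraints $f \in \{0, 2\}$ and $|L(v)| = \kappa$. The rest is formal: the dictionary between ID-covers of $G$ and $(f_{1}, \dots, f_{\kappa})$-partitions that lets us read linear forests off $R$, and the degree refinement $\deg_{R}(v, p) \leq f(v, p)$, which \autoref{SMR} already supplies. (Running the same scheme with $f$ valued in $\{0, 1\}$ and \autoref{MR} in place of \autoref{SMR} recovers \autoref{VERT} in the same uniform fashion.)
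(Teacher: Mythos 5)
Your overall route is the paper's own: encode a list $\kappa$-assignment with $\kappa = \lceil\Delta/2\rceil$ as a $\{0,2\}$-valued ID-cover, invoke \autoref{SMR} to get a transversal with $\deg_{R}(v,q)\leq f(v,q)=2$ (so each class is a linear forest), and rule out constructibility by an equality analysis via \autoref{FC}. The paper packages the last step as a separate ``stronger result'' (covers with $f\leq\Delta-1$, excluding only building covers of $K_{\Delta+1}$) and its proof of that result runs the block analysis on the \emph{end-blocks} of $G$; you inline the analysis for the specific $\{0,2\}$-valued ID-cover instead. That is a legitimate and essentially equivalent path, and your first paragraph (the dictionary between $R$ and a linear-forest partition, and the degree refinement from \autoref{SMR}) is correct.

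The one place your argument as written has a gap is exactly where you claim the care is needed. In \autoref{constructible} the function attached to a building block $B$ of the decomposition is $f^{(B)}$ with $f=f^{(1)}+f^{(2)}$ at the identified vertex, so at a cut vertex of $G$ the block's function is only a \emph{summand} of $f$. Consequently ``$f\in\{0,2\}$ excludes the cycle building covers'' and ``a monoblock forces $\kappa$ surviving copies per vertex'' are justified only at vertices that are not cut vertices of $G$: a block all of whose vertices are cut vertices could a priori carry a circular-ladder cover with $f^{(B)}\equiv 1$ (splitting $f(w,q)=2$ as $1+1$ across two blocks), or a monoblock whose kernel keeps one copy per $X_{w}$ even though $|L(w)|=\kappa\geq 2$. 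The repair is short and is what the paper does implicitly: since constructibility forces \emph{every} block to be a building cover, examine an end-block $B$ and a non-cut vertex $v$ of $G$ in it (such $v$ exists, and there $f^{(B)}(v,\cdot)=f(v,\cdot)$ with $\deg_{B}(v)=\deg_{G}(v)=\Delta$). Then your three cases run verbatim: the ladder types are killed by $f(v,q)\in\{0,2\}$, the monoblock forces $\kappa=1$ and hence $G$ a cycle, and the $\widetilde{K_{p}}$ type forces $G=K_{\Delta+1}$ of odd order. With that localization added, the proof is complete.
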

Indeed, we show the following stronger result, which together with \autoref{SMR} implies \autoref{LVA}. 
\begin{theorem}
Let $G$ be a connected graph with maximum degree $\Delta \geq 3$. Let $H$ be a cover of $G$ and $f$ be a function from $V(H)$ to $\{0, 1, \dots, \Delta -1\}$. If $(H, f)$ is not a building cover with $G = K_{\Delta + 1}$, and $f(v, 1) + f(v, 2) + \dots + f(v, s) \geq \Delta$, then $H$ has a strictly $f$-degenerate transversal. 
\end{theorem}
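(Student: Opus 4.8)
The plan is to derive this from the characterisation in \autoref{MR}. Since $\deg_{G}(v)\le\Delta\le f(v,1)+\cdots+f(v,\kappa)$ for every $v\in V(G)$, the degree hypothesis of \autoref{MR} holds, so it suffices to prove that $(H,f)$ is non-constructible; once this is known, \autoref{SMR} in fact yields a strictly $f$-degenerate transversal $R$ with $\deg_{R}(v,q)\le f(v,q)$ for all $v$ and $q$, which is what one wants for the linear list vertex arboricity application. So suppose, for a contradiction, that $(H,f)$ is constructible. By \autoref{FC}, $f(v,1)+\cdots+f(v,\kappa)=\deg_{G}(v)$ for every $v$; together with $f(v,1)+\cdots+f(v,\kappa)\ge\Delta$ and $\deg_{G}(v)\le\Delta$ this forces $G$ to be $\Delta$-regular and each vertex-sum to equal $\Delta$. (Note $\kappa\ge 2$, since a single coordinate cannot reach $\Delta$ under the bound $f\le\Delta-1$.)

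The one structural fact that makes the range bound bite concerns monoblocks: if a valued cover $(H',f')$ of a graph $B'$ is a monoblock, then its kernel is isomorphic to $B'$, so on each set $X_{v}$ exactly one coordinate of $f'$ is positive and that coordinate equals $\sum_{q}f'(v,q)=\deg_{B'}(v)$. Hence a monoblock cannot lie over a vertex of degree $\Delta$ without some $f'$-value reaching $\Delta$, which is forbidden. Since $\Delta\ge 3$ also rules out cycles as base graphs of building covers, these two remarks should eliminate everything except complete graphs.

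Now I would unwind \autoref{constructible}: $(H,f)$ is assembled from building covers arranged in a tree, consecutive pieces being identified along a cut vertex. If that tree has more than one node, pick a leaf building cover $(H^{\circ},f^{\circ})$ over a base graph $B^{\circ}$, attached to the rest of the construction at a single vertex $w$; note $B^{\circ}$ has at least two vertices, since a one-vertex graph admits no building cover (its kernel would be empty). Every vertex $v\in V(B^{\circ})\setminus\{w\}$ occurs in no other piece of the construction, hence is affected by no identification, so $\deg_{G}(v)=\deg_{B^{\circ}}(v)$ and $f^{\circ}(v,\cdot)=f(v,\cdot)$; as $G$ is $\Delta$-regular, $\deg_{B^{\circ}}(v)=\Delta$ for all such $v$. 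If $B^{\circ}$ is a cycle then $\Delta=2$, a contradiction. If $B^{\circ}=K_{p}$ then $p-1=\deg_{B^{\circ}}(w)=\Delta$, and since $w$ also has a neighbour outside $B^{\circ}$ we get $\deg_{G}(w)\ge\Delta+1$, contradicting $\Delta$-regularity. If $(H^{\circ},f^{\circ})$ is a monoblock then, by the observation above, $f(v,\cdot)=f^{\circ}(v,\cdot)$ has its unique positive coordinate equal to $\deg_{B^{\circ}}(v)=\Delta>\Delta-1$, contradicting the range of $f$. Thus the construction tree has a single node, i.e.\ $(H,f)$ is itself a building cover over $B=G$. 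Excluding the monoblock and cycle options exactly as above, we are left with $G=K_{p}$, $p-1=\Delta$; that is, $(H,f)$ is a building cover with $G=K_{\Delta+1}$ --- precisely the configuration excluded in the hypothesis. This contradiction shows $(H,f)$ is non-constructible, and the conclusion follows.

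I expect the main obstacle to be the bookkeeping in the leaf step: making precise that a non-attaching vertex of a leaf building cover is untouched by every identification performed in the construction, so that both its degree and all of its $f$-values are inherited unchanged from $B^{\circ}$, and disposing of the degenerate one-vertex base graph. Beyond that, the argument is a short deduction from \autoref{MR}, \autoref{SMR} and \autoref{FC}.
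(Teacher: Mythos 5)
Your proposal is correct and follows essentially the same route as the paper: reduce to the constructible case via \autoref{MR}, use \autoref{FC} together with the hypothesis to force $\Delta$-regularity, rule out cycle-based building covers by $\Delta\ge 3$ and monoblocks by the range bound $f\le\Delta-1$, and conclude that $(H,f)$ would have to be a building cover with $G=K_{\Delta+1}$, contradicting the hypothesis. If anything, your treatment of the monoblock case (via the range restriction) is more explicit than the paper's, which asserts without comment that the end-blocks must be complete graphs.
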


\begin{proof}
Suppose that $H$ has no strictly $f$-degenerate transversals. Note that $f(v, 1) + f(v, 2) + \dots + f(v, s) \geq \Delta \geq \deg_{G}(v)$ for each $v \in V(G)$. By \autoref{MR} and \autoref{FC}, the cover $(H, f)$ must be constructible and $f(v, 1) + f(v, 2) + \dots + f(v, s) = \deg_{G}(v) = \Delta$ for each $v \in V(G)$. This implies that $G$ is $\Delta$-regular and $f(v, 1) + f(v, 2) + \dots + f(v, s) = \Delta$ for each $v \in V(G)$. Since $G$ is $\Delta$-regular but the maximum value of $f$ is at most $\Delta - 1 $, each end-block must be a complete graph. Since $(H, f)$ is constructible, the regularity of $G$ implies that $G$ is the complete graph $K_{\Delta + 1}$ and $(H, f)$ should be a building cover of type (ii), which is a contradiction. 
\end{proof}

\section{Open discussions}
In this section, we give some problems that we are considering, and we present some challenging open problems. 
\subsection{Problem 1}
Farzad \cite{MR2538645} showed that every planar graph without $7$-cycles is $4$-choosable. Huang, Shiu and Wang \cite{MR2926103} showed that every planar graph without $7$-cycles has vertex arboricity at most two. 

\begin{conjecture}
Let $G$ be a planar graph without $7$-cycles. Let $H$ be a cover of $G$ and $f$ be a function from $V(H)$ to $\{0, 1, 2\}$. If $f(v, 1) + f(v, 2) + \dots + f(v, s) \geq 4$ for each $v \in V(G)$, then $H$ has a strictly $f$-degenerate transversal. 
\end{conjecture}

Recently, Kim \etal proved the following result which partially support the conjecture. 
\begin{theorem}[Kim, Liu and Yu, \cite{MR4108299}]
Every planar graph without $7$-cycles and butterflies is DP-4-colorable, where a butterfly is a graph isomorphic to the configuration depicted in \autoref{butterfly}. 
\end{theorem}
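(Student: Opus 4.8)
The plan is to argue by contradiction via a minimal counterexample together with the discharging method. Let $G$ be a planar graph without $7$-cycles and butterflies, and let $(H, f)$ be a valued cover of $G$ with $f$ taking values in $\{0, 1\}$ and $f(v, 1) + f(v, 2) + \dots + f(v, \kappa) \geq 4$ for each $v \in V(G)$; DP-$4$-colorability is exactly the assertion that every such $H$ has a strictly $f$-degenerate transversal (an independent transversal). Suppose some $H$ has none, and choose $G$ with the fewest vertices among all such counterexamples. Then $(H, f)$ is a minimal non-strictly $f$-degenerate pair, so \autoref{L} gives $f(v, 1) + \dots + f(v, \kappa) \leq \deg_{G}(v)$ for each $v$; combined with the hypothesis this forces $\delta(G) \geq 4$. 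Fix a plane embedding and assign initial charges $\mu(v) = \deg(v) - 4$ to each vertex and $\mu(f) = \deg(f) - 4$ to each face, so that Euler's formula yields a total charge of $-8$. Since $\delta(G) \geq 4$, every vertex already has nonnegative charge, and the entire deficit is concentrated on the $3$-faces, each contributing $-1$.

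Next I would establish a list of reducible configurations that cannot appear in the minimal counterexample. The base case is \autoref{L} itself (no vertex of degree at most $3$). Beyond it, the decisive configurations concern degree-$4$ vertices surrounded by other low-degree vertices and short chains or small clusters of $3$-faces. For each configuration I would delete a suitable vertex set $S$, invoke minimality to obtain a strictly $f$-degenerate transversal $R$ of $H - X_{S}$, and then extend $R$ across $X_{S}$ one vertex at a time. The extension is the counting argument behind \autoref{L}: a vertex $(u, p)$ may be added to $R$ whenever it has fewer than $f(u, p)$ chosen neighbors, and since $\sum_{q} f(u, q) \geq 4$ exceeds the number of already-colored neighbors of $u$, a valid choice exists. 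For configurations with several uncolored vertices one orders them so each has enough budget when its turn comes. This is where the cover structure must be handled with care, since two neighbors of $u$ may conflict through the matching $\mathscr{M}_{uv}$ in a way not visible in $G$ alone.

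I would then design discharging rules sending charge from faces of degree at least $5$ and vertices of degree at least $5$ toward incident $3$-faces, and verify that after discharging every vertex and every face has nonnegative final charge. Here the excluded $7$-cycles and butterflies enter decisively: they bound how densely $3$-faces can agglomerate around a vertex or an edge, guaranteeing that each $3$-face receives enough charge while no donating vertex or face is drained below zero. The absence of $7$-cycles constrains the cyclic structure around faces of moderate length, and the butterfly exclusion rules out the tight triangle cluster that would otherwise leave an unavoidable negative face. The resulting nonnegative final charges contradict the total of $-8$, completing the proof.

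The hardest part will be assembling a complete and correct list of reducible configurations that dovetails with the discharging rules, so that every potential source of negative final charge is either forbidden outright (a $7$-cycle or butterfly), shown reducible (an extension argument on the cover), or compensated by a rule. The reducibility proofs are the delicate component, because $H$ is not determined by $G$: the precise matchings $\mathscr{M}_{uv}$ affect which colors remain available, so for each configuration the extension order and the slack $\sum_{q} f(u, q) - \deg_{G}(u)$ must be tracked explicitly rather than treated as in ordinary list coloring.
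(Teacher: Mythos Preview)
The paper does not prove this theorem at all. It appears in Section~6 (``Open discussions'') as an external result attributed to Kim, Liu and Yu \cite{Kim2019}, cited only to partially support Conjecture~1. There is therefore no proof in the paper to compare your proposal against.

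As for the proposal itself: your overall strategy (minimal counterexample, $\delta(G)\geq 4$ from \autoref{L}, then reducible configurations plus discharging with Euler-formula charges $\deg(v)-4$ and $\deg(f)-4$) is indeed the standard template for results of this type, and it is almost certainly the skeleton of the argument in \cite{Kim2019}. But what you have written is a plan, not a proof. The entire content of such a theorem lies in the specific list of reducible configurations and the specific discharging rules, together with the verification that the two match up; you have not supplied either. Your remarks that the absence of $7$-cycles ``constrains the cyclic structure around faces of moderate length'' and that the butterfly exclusion ``rules out the tight triangle cluster'' are heuristics, not arguments---in practice one needs precise statements (e.g., which face lengths are forced, which adjacency patterns of $3$-faces are forbidden) and then a case analysis showing every face and vertex ends nonnegative. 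Until those are written down, the proposal has no verifiable content beyond the observation $\delta(G)\geq 4$.
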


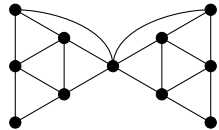
\begin{figure}[htbp]%
\centering
\begin{tikzpicture}
\coordinate (O) at (0, 0);
\coordinate (A1) at (150:1.5);
\coordinate (A2) at (210:1.5);
\coordinate (A3) at (-30:1.5);
\coordinate (A4) at (30:1.5);
\coordinate (B1) at ($0.5*(A1)$);
\coordinate (B2) at ($0.5*(A2)$);
\coordinate (B3) at ($0.5*(A3)$);
\coordinate (B4) at ($0.5*(A4)$);
\coordinate (C1) at ($0.5*($(A1)+(A2)$)$);
\coordinate (C2) at ($0.5*($(A3)+(A4)$)$);
\draw (O)--(A1)--(A2)--cycle;
\draw (O)--(A3)--(A4)--cycle;
\draw (B1)--(B2)--(C1)--cycle;
\draw (B3)--(B4)--(C2)--cycle;
\draw (O)..controls +(up: 0.5) and +(right:0.5)..(A1);
\draw (O)..controls +(up: 0.5) and +(left:0.5)..(A4);
\node[circle, inner sep = 1.5, fill, draw] () at (O) {};
\node[circle, inner sep = 1.5, fill, draw] () at (A1) {};
\node[circle, inner sep = 1.5, fill, draw] () at (A2) {};
\node[circle, inner sep = 1.5, fill, draw] () at (A3) {};
\node[circle, inner sep = 1.5, fill, draw] () at (A4) {};
\node[circle, inner sep = 1.5, fill, draw] () at (B1) {};
\node[circle, inner sep = 1.5, fill, draw] () at (B2) {};
\node[circle, inner sep = 1.5, fill, draw] () at (B3) {};
\node[circle, inner sep = 1.5, fill, draw] () at (B4) {};
\node[circle, inner sep = 1.5, fill, draw] () at (C1) {};
\node[circle, inner sep = 1.5, fill, draw] () at (C2) {};
\end{tikzpicture}
\caption{A butterfly.}
\label{butterfly}
\end{figure}

\subsection{Problem 2}
Choi and Zhang \cite{MR3233411} proved that every toroidal graph without $4$-cycles has vertex arboricity at most two. Chen, Huang and Wang \cite{MR3508765} extended this result and showed that every toroidal graph without $4$-cycles adjacent to $3$-cycles has list vertex arboricity at most two.

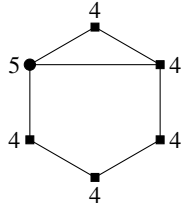
\begin{figure}[htbp]%
\centering
\begin{tikzpicture}
\coordinate (A) at (90:1);
\coordinate (B) at (150:1);
\coordinate (C) at (210:1);
\coordinate (D) at (270:1);
\coordinate (E) at (330:1);
\coordinate (F) at (30:1);
\draw (A) node[above]{\small $4$}--(B) node[left]{\small $5$}--(C) node[left]{\small $4$}--(D) node[below]{\small $4$}--(E) node[right]{\small $4$}--(F) node[right]{\small $4$}--cycle;
\draw (B)--(F);
\node[rectangle, inner sep = 1.5, fill, draw] () at (A) {};
\node[circle, inner sep = 1.5, fill, draw] () at (B) {};
\node[rectangle, inner sep = 1.5, fill, draw] () at (C) {};
\node[rectangle, inner sep = 1.5, fill, draw] () at (D) {};
\node[rectangle, inner sep = 1.5, fill, draw] () at (E) {};
\node[rectangle, inner sep = 1.5, fill, draw] () at (F) {};
\end{tikzpicture}
\caption{A reducible configuration for list vertex arboricity at most two.}
\label{F36}
\end{figure}
Chen, Huang and Wang \cite{MR3508765} showed that the configuration in \autoref{F36} is reducible for list vertex arboricity at most two. But Kim and Yu \cite{MR3969022} showed that this is irreducible for DP-$4$-coloring, so we cannot trivially obtain a solution to the corresponding DP-$4$-coloring problem. 

Very recently, Wang \etal \cite{Wang2019+} gave the following result which improves all the results mentioned in this subsection. 

\begin{figure}%
\centering
\subcaptionbox{\label{fig:subfig:a}}
{\begin{tikzpicture}
\coordinate (A) at (45:1);
\coordinate (B) at (135:1);
\coordinate (C) at (225:1);
\coordinate (D) at (-45:1);
\coordinate (H) at (90:1.414);
\draw (A)--(H)--(B)--(C)--(D)--cycle;
\draw (A)--(B);
\node[circle, inner sep = 1.5, fill, draw] () at (A) {};
\node[circle, inner sep = 1.5, fill, draw] () at (B) {};
\node[circle, inner sep = 1.5, fill, draw] () at (C) {};
\node[circle, inner sep = 1.5, fill, draw] () at (D) {};
\node[circle, inner sep = 1.5, fill, draw] () at (H) {};
\end{tikzpicture}}\hspace{1.5cm}
\subcaptionbox{\label{fig:subfig:b}}
{\begin{tikzpicture}
\coordinate (A) at (45:1);
\coordinate (B) at (135:1);
\coordinate (C) at (225:1);
\coordinate (D) at (-45:1);
\coordinate (H) at (90:1.414);
\coordinate (X) at ($(A)+(0, 1)$);
\coordinate (Y) at (45:2);
\draw (A)--(Y)--(X)--(H)--(B)--(C)--(D)--cycle;
\draw (X)--(A)--(H);
\node[circle, inner sep = 1.5, fill, draw] () at (A) {};
\node[circle, inner sep = 1.5, fill, draw] () at (B) {};
\node[circle, inner sep = 1.5, fill, draw] () at (C) {};
\node[circle, inner sep = 1.5, fill, draw] () at (D) {};
\node[circle, inner sep = 1.5, fill, draw] () at (H) {};
\node[circle, inner sep = 1.5, fill, draw] () at (X) {};
\node[circle, inner sep = 1.5, fill, draw] () at (Y) {};
\end{tikzpicture}}
\caption{Forbidden configurations in \autoref{IMRTHREE}.}
\label{A345}
\end{figure}
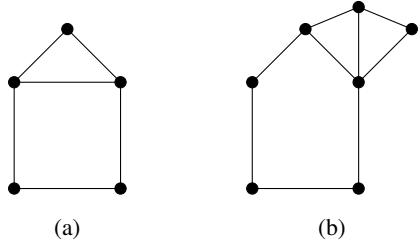

\begin{restatable}{theorem}{IMRTHREE}\label{IMRTHREE}
Let $G$ be a toroidal graph without subgraphs isomorphic to the configurations in \autoref{A345}. Let $H$ be a cover of $G$ and $f$ be a function from $V(H)$ to $\{0, 1, 2\}$. If $f(v, 1) + f(v, 2) + \dots + f(v, s) \geq 4$ for each $v \in V(G)$, then $H$ has a strictly $f$-degenerate transversal. 
\end{restatable}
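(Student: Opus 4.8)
The plan is to follow the standard discharging-with-reducible-configurations framework, using the structural machinery developed in Sections~\ref{sec:DTR}--\ref{sec:MINIMAL} to handle all the unavoidable local configurations uniformly. First I would suppose, for contradiction, that $G$ together with a valued cover $(H, f)$ is a counterexample with $|V(G)|$ minimum. Then $G$ is connected and $(H, f)$ is a minimal non-strictly $f$-degenerate pair, so \autoref{L} applies: every vertex $v$ with $\mathrm{def}$ small, i.e. with $f(v, 1) + \dots + f(v, \kappa) > \deg_G(v)$, cannot exist, hence every vertex $v$ satisfies $\deg_G(v) \geq f(v, 1) + \dots + f(v, \kappa) \geq 4$, so $\delta(G) \geq 4$. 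Combined with the fact that $G$ embeds in the torus (so $\sum_{v}(\deg(v)-6) + \sum_{f}(2\deg(f)-6) = 0$, as in \eqref{TO}), this already forces $G$ to be rather dense: most vertices have degree $4$ or $5$ and most faces are triangles.

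Next I would build the list of reducible configurations. The key reducibility tool is the Gallai-type theorem of Section~\ref{sec:MINIMAL}: if $F$ is a $2$-connected subgraph of $G$ with $V(F) \subseteq \mathcal{D}$, then $G[V(F)]$ is a cycle, a complete graph, or each vertex $v$ of $G[V(F)]$ has $\deg_{G[V(F)]}(v) \leq \max_q f(v,q) \leq 2$ (since $f$ maps into $\{0,1,2\}$). This is exactly what rules out, for instance, $2$-connected subgraphs on $\mathcal{D}$-vertices that are $3$-regular or denser and are neither cycles nor $K_4$; in particular small dense clusters of $4$-vertices built from mutually adjacent short cycles become reducible, which is where the forbidden configurations in \autoref{A345} enter. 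I would also use the basic extension argument (as in the proof of \autoref{L} or \autoref{GE}): whenever a small set $S$ of vertices can be deleted so that, after decreasing $f$ along edges to $S$, the remaining cover still satisfies the hypothesis on the smaller graph, minimality gives a transversal that extends back over $S$. This handles configurations like a $4$-vertex with two neighbours of degree $4$ lying on a common triangle, the two pictured configurations, and their variants.

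Then comes the discharging. Assign each vertex $v$ charge $\deg_G(v) - 6$ and each face $g$ charge $2\deg(g) - 6$; by \eqref{TO} the total is $0$. The discharging rules would be the usual ones for planar/toroidal $4$-choosability-type arguments adapted to this setting: $3$-faces receive charge from their incident vertices of degree $\geq 5$, with the amount depending on the local structure (e.g. whether adjacent faces are also triangles), and $\geq 7$-vertices or $\geq 6$-vertices send fixed amounts to incident small faces or to neighbouring $4$-vertices. Using $\delta(G) \geq 4$, the absence of the configurations in \autoref{A345}, and the reducibility consequences above (which constrain how $4$- and $5$-vertices can cluster and how many triangles can surround a low-degree vertex), I would show every vertex and every face ends with nonnegative charge, and that some element ends strictly positive — contradicting the sum being $0$.

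The main obstacle will be the discharging analysis around $4$- and $5$-vertices embedded in locally triangulated regions: because $\delta(G) \geq 4$ is weaker than the $\delta \geq 3$ one gets for ordinary colouring, there are more potential local configurations to rule out, and one must be careful that every such configuration either contains a copy of one of the forbidden subgraphs in \autoref{A345} or is reducible via the Gallai-type theorem / the $f$-decrement extension argument. Designing the discharging rules so that the charge actually redistributes to nonnegativity — in particular ensuring $3$-faces incident to several $4$-vertices get enough, which is exactly the pressure point that forces the forbidden configurations — is the delicate part; the torus (versus the plane) also requires the extra hypothesis only in the related \autoref{MRONE} but here is absorbed because the relevant $2$-connected regular obstructions are themselves reducible by the Gallai-type result once the configurations in \autoref{A345} are excluded. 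The rest (verifying the hypothesis transfers to the smaller cover in each extension, and the routine charge bookkeeping) I would leave to direct computation.
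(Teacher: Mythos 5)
First, a point of reference: the paper does not actually prove \autoref{IMRTHREE}. It is stated without proof and attributed to Wang and Wang \cite{Wang2019+}; the present paper only records it as an application/improvement in the discussion of Problem 3. So there is no in-paper proof to compare against, and your argument has to stand entirely on its own.

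As it stands, it does not. What you have written is a plan for a discharging proof, not a proof: the entire content of such an argument lies in (i) the explicit list of reducible configurations together with a verified reduction for each one (exhibiting the set $S$, the decremented function $f'$, checking that the hypotheses really transfer to the smaller cover, and checking that no constructible obstruction arises), and (ii) the explicit discharging rules together with the case analysis showing every vertex and face ends nonnegative with some element strictly positive. All of this is deferred ("the usual ones", "I would show", "direct computation"), and it is precisely here that the forbidden configurations of \autoref{A345} must be seen to do their work; nothing in the outline demonstrates that they do. There is also a concrete technical misstep in the setup: you assign charges $\deg_G(v)-6$ and $2\deg(g)-6$, which is the normalization suited to the ``sum $\geq 6$ or $7$'' results such as \autoref{TT}. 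Here \autoref{L} only gives $\delta(G)\geq 4$, so under your assignment every $4$- and $5$-vertex starts with charge $-2$ or $-1$ while triangles start at $0$, and you have identified no reliable source of positive charge (a toroidal near-triangulation can have very few vertices of degree $\geq 7$). The standard normalization for a ``sum $\geq 4$'' theorem on the torus is $\deg(v)-4$ for vertices and $\deg(g)-4$ for faces, under which only triangles are negative and the excluded configurations control how triangles can cluster around low-degree vertices. Finally, be careful with the Gallai-type theorem of \autoref{sec:MINIMAL}: it permits $G[V(F)]$ to be \emph{any} complete graph or cycle, and it only applies when $V(F)\subseteq\mathcal{D}$, i.e.\ to vertices where $\sum_q f(v,q)$ equals the degree; you would still need to check that each such surviving outcome (e.g.\ a $K_5$ or a large induced cycle of $\mathcal{D}$-vertices) either contains a configuration from \autoref{A345} or is separately reducible. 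Until the configurations, rules, and verifications are written out, the theorem is not proved.
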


Note that \autoref{fig:subfig:b} is a proper subgraph of \autoref{fig:subfig:b--} and \autoref{fig:subfig:c--}, thus the class of toroidal graphs without subgraphs isomorphic to the configurations in \autoref{E} is a superclass of that with respect to \autoref{A345}. 
\begin{conjecture}
Let $G$ be a toroidal graph without subgraphs isomorphic to the configurations in \autoref{E}. Let $H$ be a cover of $G$ and $f$ be a function from $V(H)$ to $\{0, 1, 2\}$. If $f(v, 1) + f(v, 2) + \dots + f(v, s) \geq 4$ for each $v \in V(G)$, then $H$ has a strictly $f$-degenerate transversal. 
\end{conjecture}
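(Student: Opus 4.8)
The plan is to argue by contradiction. Let $(G, H, f)$ be a counterexample with $|V(G)|$ minimum, and combine the structural results of Sections~\ref{sec:DTR}--\ref{sec:MINIMAL} with a discharging argument on a cellular embedding of $G$ in the torus. First I would normalize the counterexample. Every proper subgraph of $G$, carrying the restriction of $f$, satisfies the hypotheses on fewer vertices and hence has a strictly $f$-degenerate transversal, so $(H, f)$ is a minimal non-strictly $f$-degenerate pair; moreover, as long as some vertex has $f(v,1) + \dots + f(v,\kappa) \ge 5$ we may lower one of its positive coordinates and still have a counterexample on the same vertex set, so we may assume $f(v,1) + \dots + f(v,\kappa) = 4$ for every $v$. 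By Theorem~\ref{L}, $G$ is then connected with $\delta(G) \ge 4$, and $\mathcal{D}$ is precisely the set of degree-$4$ vertices.

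Next I would assemble a list of reducible configurations that $G$ cannot contain. The main ingredients are Theorem~\ref{L} (the minimum degree bound), the Gallai-type structural theorem of \autoref{sec:MINIMAL} --- a $2$-connected subgraph $F$ with $V(F) \subseteq \mathcal{D}$ forces $G[V(F)]$ to be a cycle, a complete graph, or a graph all of whose degrees are at most $\max_q f(\cdot,q) \le 2$ --- and Theorems~\ref{SMR} and~\ref{MSMR}, which let one delete a small patch, extend a strictly $f$-degenerate transversal with prescribed degree slack from the rest of $H$, and then fill in the patch greedily. With these, together with the absence of the two subgraphs forbidden in \autoref{A345}, one rules out dense $2$-connected clusters of degree-$4$ vertices other than short cycles and $K_5$, as well as various short-cycle configurations carrying several degree-$4$ vertices; this controls the local structure around degree-$4$ and degree-$5$ vertices tightly enough to feed the discharging.

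Then I would run the discharging. Assign $\mu(x) = \deg(x) - 6$ to each vertex $x$ and $\mu(\ell) = 2\deg(\ell) - 6$ to each face $\ell$; by~\eqref{TO} the total charge is $0$. Only degree-$4$ and degree-$5$ vertices are in deficit, while the surplus sits on faces of length at least $4$ and on vertices of degree at least $7$, so the rules should push charge from long faces (and large vertices) towards the deficient vertices. Using the reductions above and the forbidden configurations I would check that no vertex or face finishes with negative charge; since the total is $0$, the discharging is then tight everywhere, which forces $G$ into a very short list of exceptional structures, the principal one being a $2$-connected $4$-regular graph. In that case $f(v,1) + \dots + f(v,\kappa) = 4 = \deg_G(v)$ for every $v$, so Theorem~\ref{MR} applies: a constructible cover of the $2$-connected graph $G$ must be a single building cover, and $(H,f)$ cannot be a monoblock (that needs $f(v,q) = \deg_G(v) \ge 4$ for a single coordinate, but $f \le 2$), cannot be of the cycle type ($G$ is not $2$-regular), and cannot be of the complete-graph type ($G = K_p$ would force $p = 5$, and $K_5$ contains the configuration of \autoref{fig:subfig:a}). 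Hence $(H,f)$ is non-constructible and $H$ has a strictly $f$-degenerate transversal, a contradiction; the remaining exceptional structures are ruled out by the same kind of reasoning.

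The main obstacle is the interplay between the reducible-configuration list and the discharging. Because \autoref{A345} forbids far less than the configurations used for the earlier toroidal results (\autoref{MRONE}, \autoref{MRTHREE}), the local charge at degree-$4$ and degree-$5$ vertices is tight, and balancing it will most likely require a second discharging round (or a global redistribution among the faces) together with several genuinely new reductions squeezed out of Theorems~\ref{SMR} and~\ref{MSMR}; identifying and proving the right reducible configurations is where the real work lies. By contrast, pinning $G$ down in the tight case and dismissing the building covers via Theorem~\ref{MR} is comparatively routine.
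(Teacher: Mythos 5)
This statement is posed in the paper as an open \emph{conjecture} (Problem~3 of the final section); the paper contains no proof of it, so there is nothing to compare your argument against, and your proposal does not close the gap. What you have written is a strategy outline, not a proof: the entire combinatorial core of a discharging argument --- the explicit list of reducible configurations, the proofs of their reducibility, the discharging rules, and the verification that every vertex and face ends with nonnegative charge --- is left unspecified. You concede this yourself when you write that identifying and proving the right reducible configurations ``is where the real work lies'' and that balancing the charge ``will most likely require a second discharging round \dots together with several genuinely new reductions.'' That is precisely the content that would constitute a proof, and it is absent.

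There is also a concrete misuse of the hypothesis. You propose to exploit ``the absence of the two subgraphs forbidden in \autoref{A345},'' but the conjecture only forbids the configurations of \autoref{E}. As the paper notes, \autoref{fig:subfig:b} is a proper subgraph of \autoref{fig:subfig:b--} and of \autoref{fig:subfig:c--}, so the class of toroidal graphs avoiding \autoref{E} is a strict superclass of the class avoiding \autoref{A345}: your minimal counterexample $G$ may well contain the configuration of \autoref{fig:subfig:b}, and the structural leverage of \autoref{IMRTHREE} is therefore not available. This is exactly why the statement is a conjecture rather than a corollary of \autoref{IMRTHREE}. The parts of your outline that do hold up --- normalizing to $f(v,1)+\dots+f(v,\kappa)=4$, invoking \autoref{L} to get $\delta(G)\geq 4$, and dismissing the building covers in the tight $4$-regular case via \autoref{MR} (no monoblock since $f\leq 2<4$, no $\widetilde{K_{5}}$ since $K_{5}$ contains \autoref{fig:subfig:a--}, no ladder since $G$ is not $2$-regular) --- are routine bookkeeping and do not substitute for the missing reductions and discharging.
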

\subsection{Problem 3}
It is known that every triangle-free planar graph is $3$-colorable \cite[Gr\"{o}tzsch Theorem]{MR0116320}, but such graphs are not $3$-choosable \cite{MR1360130}. Note that every triangle-free planar graph is $3$-degenerate, thus such graphs are $4$-choosable. When triangles are allowed in planar graphs but the distance of triangles is at least one, the graphs are also $4$-choosable. Wang and Lih \cite{MR1935837} proved that every planar graph without intersecting triangles is $4$-choosable. Luo \cite{MR2292964} proved that every toroidal graph without intersecting triangles is $4$-choosable. Li and Wang proved the following result as a corollary in \cite[Corollary 1]{MR4294211}. 

\begin{theorem}[Li and Wang \cite{MR4294211}]
Every planar graph without intersecting triangles is DP-4-colorable. 
\end{theorem}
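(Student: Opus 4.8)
The plan is to recast DP-$4$-colorability in the language of strictly $f$-degenerate transversals and then run a discharging argument in the style of the proof of \autoref{TT}. First I would use that a DP-coloring is exactly a strictly $f$-degenerate transversal for a $\{0,1\}$-valued $f$; hence it suffices to show that for every cover $H$ of $G$ and every $f\colon V(H)\to\{0,1\}$ with $f(v,1)+\dots+f(v,\kappa)\ge 4$ for each $v$, the pair $(H,f)$ admits a strictly $f$-degenerate transversal. Suppose not, and take a counterexample $G$ with the fewest vertices; then $(H,f)$ is a minimal non-strictly $f$-degenerate pair. By \autoref{L} the graph $G$ is connected and $f(v,1)+\dots+f(v,\kappa)\le\deg_G(v)$, so the hypothesis forces $\delta(G)\ge 4$.

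Next I would extract stronger reducible information from the minimality. Since $f\le 1$ everywhere we have $\max_q f(v,q)\le 1$, so the Gallai-type structural result (the theorem immediately after \autoref{L}) rules out its last alternative for any nontrivial $2$-connected $F$ with $V(F)\subseteq\mathcal{D}$: such an $F$ would need $\deg_{G[V(F)]}(v)\le 1$, impossible for a $2$-connected graph. Consequently every $2$-connected subgraph spanned by vertices with $\sum_q f(v,q)\ge\deg_G(v)$ is a cycle or a complete graph, which, together with the absence of intersecting triangles (so in particular no $K_4$ and no two triangles meeting at a vertex), severely limits the local structure around degree-$4$ vertices. I would package this into a short list of reducible configurations---most importantly that a degree-$4$ vertex cannot carry too many degree-$4$ neighbours inside triangles---each reducibility following by deleting the offending part, invoking \autoref{MR} or \autoref{SMR} on the remainder, and extending the transversal back.

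Finally I would close with discharging on a fixed plane embedding. Assign $\mu(v)=\deg(v)-4$ and $\mu(f)=\deg(f)-4$; by Euler's formula $\sum_v\mu(v)+\sum_f\mu(f)=-8$, and the only negative charges sit on the $3$-faces. The no-intersecting-triangles hypothesis supplies the decisive local facts: each vertex lies in at most one triangle and hence is incident to at most one $3$-face, and every edge of a $3$-face is shared with a $\ge 4$-face (otherwise two triangles would share an edge). Using these together with $\delta(G)\ge 4$ and the reducible configurations of the previous step, I would design rules sending charge from $5^{+}$-vertices and from large faces to the $3$-faces so that every vertex and face ends with nonnegative charge, contradicting the total $-8$.

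The hard part will be the discharging-plus-reducibility bookkeeping in the third paragraph. ``No intersecting triangles'' is a comparatively weak sparsity condition---it permits arbitrarily many $4$-faces and arbitrarily many (vertex-disjoint) triangles---so the assignment $\mu(v)=\deg(v)-4$ does not balance on its own, and one genuinely needs the reducible configurations of the second paragraph to forbid the tight degree-$4$ patterns that a naive count would otherwise allow. Verifying that the chosen family of configurations is simultaneously reducible (via the transversal-extension arguments) and unavoidable (via the discharging rules) is where essentially all the effort lies; the framework of \autoref{L}, \autoref{MR}, and \autoref{SMR} is precisely what makes each individual reducibility check short.
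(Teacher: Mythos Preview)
The paper does not prove this theorem at all: it merely quotes it as a result of Li and Wang, stating that it appears ``as a corollary in \cite[Corollary~1]{Li2019a}'', in the open-discussions section. There is therefore no proof in the present paper to compare your proposal against.

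As for your outline itself, the overall strategy---reduce DP-$4$-colorability to the existence of a strictly $f$-degenerate transversal with $f\in\{0,1\}$, take a minimal counterexample, use \autoref{L} to get $\delta(G)\ge 4$, and then run discharging with $\mu(v)=\deg(v)-4$, $\mu(f)=\deg(f)-4$---is the standard template and almost certainly what \cite{Li2019a} does. Your observation that ``no intersecting triangles'' forces each vertex into at most one $3$-face and every $3$-face to be surrounded by $4^{+}$-faces is correct and is the engine of any such argument. However, the proposal is a plan rather than a proof: you have not specified a single concrete reducible configuration, nor a concrete discharging rule, and you explicitly flag the bookkeeping as the hard part. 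In particular, the appeal to the Gallai-type theorem after \autoref{L} gives you less than you suggest: it only constrains $2$-connected subgraphs all of whose vertices lie in $\mathcal{D}=\{v:\sum_q f(v,q)\ge\deg_G(v)\}$, i.e.\ the degree-$4$ vertices, and it tells you the induced subgraph on such a set is a GDP-tree---this is \autoref{Critical-3} and by itself does not forbid, say, a $4$-face of degree-$4$ vertices. The genuine content of the result lies in naming an explicit finite family of configurations (around degree-$4$ vertices on $3$-faces) that are reducible via \autoref{MR}/\autoref{SMR} and then checking they are unavoidable under your charge scheme; until those are written down and verified, this remains a sketch.
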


Chen \etal \cite{MR2889524} showed that every planar graph without intersecting triangles has vertex arboricity at most two. Analogously, Cai \etal \cite{MR3761240} proved that every planar graph without intersecting $5$-cycles has vertex arboricity at most two, while Li \etal \cite{Lv2019} proved that every planar graph without intersecting 5-cycles is DP-4-colorable. Very recently, Wang and Wang \cite{Wang2019+} showed the following theorem for planar graphs without intersecting 5-cycles. 

\begin{theorem}
Let $G$ be a planar graph without intersecting $5$-cycles. Let $H$ be a cover of $G$ and $f$ be a function from $V(H)$ to $\{0, 1, 2\}$. If $f(v, 1) + f(v, 2) + \dots + f(v, s) \geq 4$ for each $v \in V(G)$, then $H$ has a strictly $f$-degenerate transversal. 
\end{theorem}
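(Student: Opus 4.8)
The plan is to argue by contradiction with a minimal counterexample and then combine the Gallai-type structural results of \autoref{sec:MINIMAL} with a discharging argument on the planar embedding. Suppose the theorem fails, and let $(H,f)$ be a counterexample in which $G$ has the fewest vertices. Then $(H,f)$ is a minimal non-strictly $f$-degenerate pair, so by \autoref{L} the graph $G$ is connected and $f(v,1)+f(v,2)+\dots+f(v,\kappa) \leq \deg_G(v)$ for every $v$. Together with the hypothesis $f(v,1)+\dots+f(v,\kappa) \geq 4$ this forces $\deg_G(v) \geq 4$ for every vertex, so $\delta(G) \geq 4$. Call a vertex $v$ \emph{tight} if it lies in $\mathcal{D}$, that is $f(v,1)+\dots+f(v,\kappa) = \deg_G(v)$; every degree-$4$ vertex is tight, while every non-tight vertex has degree at least $5$.

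Next I would extract the forbidden (reducible) configurations from the $2$-connected structural result of \autoref{sec:MINIMAL}. That result says that every $2$-connected subgraph $F$ of $G$ all of whose vertices are tight has $G[V(F)]$ equal to a cycle, a complete graph, or a subgraph with $\deg_{G[V(F)]}(v) \leq \max_{q} f(v,q)$ for all $v$. Because $f$ takes values only in $\{0,1,2\}$, the last alternative forces maximum degree at most $2$, and a $2$-connected graph of maximum degree $2$ is itself a cycle; hence every tight $2$-connected subgraph is a cycle or a complete graph. In particular, two triangles sharing an edge, a $4$-cycle with a chord, a $\theta$-graph, and any $K_{2,3}$ cannot occur with all vertices tight, since none of these is a cycle or a complete graph. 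For the remaining cycle and complete-graph alternatives I would invoke \autoref{MR} directly on the induced valued cover to rule out the building-cover (constructible) cases, exactly as in the proof of \autoref{TT}(ii). These reducibility statements, combined with the hypothesis that $G$ has no two $5$-cycles sharing a vertex, pin down the local structure near short faces.

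I would then run a discharging argument on a fixed plane embedding of $G$. Assign to each vertex $v$ the charge $\mu(v) = \deg_G(v) - 4$ and to each face $\phi$ the charge $\mu(\phi) = \deg(\phi) - 4$; by Euler's formula applied to the connected plane graph $G$, the total charge is $-8$. Since $\delta(G) \geq 4$, no vertex carries negative charge, and the only faces with negative charge are the triangles, each of charge $-1$; the surplus is held by vertices of degree at least $5$ and by faces of degree at least $5$. The discharging rules would move this surplus toward incident $3$-faces (and, as needed, $4$-faces). The reducibility facts from the previous step limit how triangles and chorded short cycles may cluster around a tight vertex, while the absence of intersecting $5$-cycles bounds the number of $5$-cycles through any single vertex; together these ensure that after discharging every vertex and every face has nonnegative charge, contradicting the total $-8$.

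The main obstacle will be the discharging casework in the third step, in particular controlling the charge around tight degree-$4$ and degree-$5$ vertices whose incident faces are a mix of $3$-, $4$- and $5$-faces. The delicate point is that the no-intersecting-$5$-cycles condition must be leveraged precisely where two short faces would otherwise create a second $5$-cycle through a common vertex; verifying that the reducible configurations forbid every remaining dense local pattern, and that the charge delivered to each $3$-face suffices to lift it to $0$ without over-drawing from the donors, is where the real work lies.
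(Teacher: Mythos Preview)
The paper does not contain a proof of this theorem. It appears in \autoref{sec:applications}'s ``Open discussions'' section (Problem~4) solely as a citation of an external result by Wang and Wang \cite{Wang2019+}, stated to motivate two conjectures that follow. There is therefore no in-paper proof against which to compare your proposal.

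As for the proposal itself: the overall architecture (minimal counterexample, \autoref{L} to get $\delta(G)\ge 4$, the Gallai-type block result from \autoref{sec:MINIMAL} to forbid certain tight configurations, then Euler-based discharging with $\mu(v)=\deg(v)-4$ and $\mu(\phi)=\deg(\phi)-4$) is the standard and plausible route for results of this kind, and your reduction of the third alternative in the Gallai-type theorem to ``maximum degree $\le 2$, hence a cycle'' using $f\in\{0,1,2\}$ is correct. However, what you have written is a plan, not a proof: the entire content of the result lives in the discharging rules and their verification, which you do not supply. Two specific cautions. First, the structural theorem only constrains $2$-connected subgraphs all of whose vertices lie in $\mathcal{D}$; your reducible configurations are therefore only forbidden when every vertex involved has degree exactly $4$, and you must be careful that your discharging never relies on forbidding, say, two triangles sharing an edge when one of the shared vertices has degree $5$. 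Second, the phrase ``no intersecting $5$-cycles bounds the number of $5$-cycles through any single vertex'' is true but weak; in practice the hypothesis is used to control how $5$-\emph{faces} and short facial walks can coexist around a vertex, and translating the subgraph hypothesis into facial information requires care (e.g.\ a $3$-face adjacent to a $4$-face along an edge already creates a $5$-cycle). Until the rules are written down and each face and vertex is checked, there is no proof.
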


So it is reasonable to give the following two conjectures. 

\begin{conjecture}
Let $G$ be a planar/toroidal graph without intersecting triangles. Let $H$ be a cover of $G$ and $f$ be a function from $V(H)$ to $\{0, 1, 2\}$. If $f(v, 1) + f(v, 2) + \dots + f(v, s) \geq 4$ for each $v \in V(G)$, then $H$ has a strictly $f$-degenerate transversal. 
\end{conjecture}

\begin{conjecture}
Let $G$ be a toroidal graph without intersecting $5$-cycles. Let $H$ be a cover of $G$ and $f$ be a function from $V(H)$ to $\{0, 1, 2\}$. If $f(v, 1) + f(v, 2) + \dots + f(v, s) \geq 4$ for each $v \in V(G)$, then $H$ has a strictly $f$-degenerate transversal. 
\end{conjecture}

\subsection{Problem 4}
Thomassen considered the vertex-partition of planar graphs into an $\ell_{1}$-degenerate graph and an $\ell_{2}$-degenerate graph. 
\begin{theorem}[Thomassen \cite{MR1358992}]
Every planar graph can be partitioned into a forest and a $2$-degenerate graph.
\end{theorem}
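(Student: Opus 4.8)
\emph{Proof proposal.} The plan is to recast the statement in this paper's language and then attack it by discharging on a minimal counterexample, since the degree-type results of \autoref{sec:DTR} do not apply verbatim (planar graphs have unbounded maximum degree). A forest is exactly a strictly $2$-degenerate graph and a $2$-degenerate graph is exactly a strictly $3$-degenerate graph, so by the correspondence between $(f_{1},f_{2})$-partitions and strictly $f$-degenerate transversals of an ID-cover (the Proposition in the introduction), the claim is equivalent to: every planar $G$ admits a strictly $f$-degenerate transversal for the valued ID-cover $(H,f)$ with $\kappa=2$, $f(v,1)=2$, $f(v,2)=3$. Let $G$ be a planar graph for which this fails with $|V(G)|$ minimum; then $(H,f)$ is a minimal non-strictly $f$-degenerate pair. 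Colour the class inducing the forest green and the class inducing the $2$-degenerate graph red.

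The first step is to extract structure from the earlier sections. By \autoref{L}, $f(v,1)+f(v,2)=5\le\deg_{G}(v)$ for each $v$, so $\delta(G)\ge 5$; as $G$ is planar there is a vertex of degree exactly $5$. A short case analysis (for $\deg_{G}(v)\le 5$, take a good partition of $G-v$; if $v$ has at most one green neighbour add it to the green class — one extra edge to a forest is still a forest — and otherwise $v$ has at most $\deg_{G}(v)-2\le 2$ red neighbours, so put $v$ first in a degeneracy ordering of the red class) shows that in \emph{every} good partition of $G-v$ a degree-$5$ vertex $v$ has exactly two green and three red neighbours. Next a recolouring step squeezes further: moving a green neighbour $a$ of $v$ into the red class and then $v$ into the green class (where it now has a single green neighbour) succeeds whenever $a$ has at most two red neighbours; hence every green neighbour of a degree-$5$ vertex has at least three red neighbours, and symmetric and iterated versions (also moving red neighbours, swapping along short alternating substructures) force strong restrictions on neighbourhoods of degree-$5$ vertices. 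Applying the Gallai-type theorem after \autoref{L} to $2$-connected subgraphs of $G[D]$, where $D$ is the set of degree-$5$ vertices, further pins down $G[D]$: each of its blocks is a cycle, or a small complete graph ($K_{4}$, being planar, is the largest), or has maximum degree at most $3$.

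The final step is discharging. Give each vertex $v$ charge $\deg_{G}(v)-6$ and each face $\phi$ charge $2\deg_{G}(\phi)-6$; by Euler's formula the total is $-12<0$. Using the restrictions from the previous paragraph (two degree-$5$ vertices are essentially non-adjacent, a degree-$5$ vertex is not surrounded by too many triangular faces, its green/red neighbours are constrained), design a discharging rule sending charge from faces and from vertices of degree at least $7$ to degree-$5$ vertices so that every vertex and every face ends nonnegative, contradicting negativity of the total.

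The hard part will be the recolouring analysis for degree-$5$ (and, in a tighter version, degree-$6$ and $7$) vertices. Unlike proper colouring, where Kempe chains are well controlled, ``forest'' and ``$2$-degenerate'' are global properties: moving a vertex across the partition may create an arbitrarily long cycle in the green class or break a degeneracy ordering of the red class far from $v$, so the set of admissible swaps is subtle to describe and to iterate. An alternative, essentially Thomassen's route, avoids \emph{ad hoc} recolouring by proving a stronger statement for plane near-triangulations with the colours of a few boundary vertices prescribed, so that the induction carries the extra freedom; the price is substantially heavier bookkeeping in the inductive step.
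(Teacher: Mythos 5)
The paper does not prove this statement; it is quoted from Thomassen's paper \cite{MR1358992} and used as a black box (together with the companion result on partitions into an independent set and a $3$-degenerate graph) to derive the corollary that follows it. So there is no in-paper proof to compare against, and your proposal has to stand on its own. It does not: what you have written is a research plan whose decisive steps are missing. The sound part is the setup — the translation into a $(2,3)$-partition, i.e.\ a strictly $f$-degenerate transversal of the ID-cover with $f(v,1)=2$, $f(v,2)=3$, the passage to a vertex-minimal counterexample, the conclusion $\delta(G)\geq 5$ via \autoref{L}, and the identification of ``two green, three red'' as the only bad case at a degree-$5$ vertex (your intermediate inequality $\deg_G(v)-2\leq 2$ is false for $\deg_G(v)=5$, but the surviving bad case is stated correctly). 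The single explicit recolouring you give (push a green neighbour $a$ with at most two red neighbours into the red class, then colour $v$ green) is also valid. Everything after that is unsubstantiated: ``symmetric and iterated versions \dots force strong restrictions,'' and ``design a discharging rule \dots so that every vertex and every face ends nonnegative'' assert exactly the content of the theorem without supplying it. Note also that your recolouring conclusions constrain the \emph{partitions} of $G-v$, not the graph $G$ itself; converting them into forbidden subgraphs usable in a discharging argument is an additional, unaddressed step.

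The gap is not cosmetic. A planar graph with $\delta=5$ can have its twelve-plus degree-$5$ vertices pairwise nonadjacent inside a sea of degree-$6$ vertices, in which case $G[D]$ is edgeless and the Gallai-type theorem gives nothing; your charge deficit of $-12$ must then be repaired by rules involving degree-$6$ and degree-$7$ vertices and faces, for which you exhibit no reducible configurations at all. You acknowledge this yourself (``the hard part will be the recolouring analysis''), and the obstruction you name — that ``forest'' and ``$2$-degenerate'' are global properties, so there is no clean analogue of Kempe chains — is precisely why no discharging proof of this theorem is known. Thomassen's actual argument is the strengthened induction on near-triangulations with prescribed boundary data that you mention only as an ``alternative'' and likewise do not carry out. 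As it stands the proposal establishes the reduction to $\delta(G)\geq 5$ and nothing more; it is not a proof of the statement.
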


\begin{theorem}[Thomassen \cite{MR1866722}]
Every planar graph can be partitioned into an independent set and a $3$-degenerate graph.
\end{theorem}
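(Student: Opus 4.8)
The plan is to recast the statement in the transversal language of this paper and then run a minimal-counterexample argument powered by the reducibility tools of \autoref{sec:DTR} and \autoref{sec:MINIMAL} together with a discharging argument. Partitioning $V(G)$ into an independent set and a $3$-degenerate (that is, strictly $4$-degenerate) graph is exactly saying that $G$ is $(f_{1}, f_{2})$-partitionable with $f_{1} \equiv 1$ and $f_{2} \equiv 4$; by the proposition relating ID-covers to $(f_{1}, f_{2})$-partitions, this is equivalent to the ID-cover $(H, f)$ of $G$ with $\kappa = 2$, $f(v, 1) = 1$ and $f(v, 2) = 4$ having a strictly $f$-degenerate transversal. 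So assume the theorem fails and take a planar counterexample $G$ minimizing $|V(G)|$. Then $G - v$ is $(f_{1}, f_{2})$-partitionable for every $v$, so $H - X_{v}$ has a strictly $f$-degenerate transversal while $H$ does not; that is, $(H, f)$ is a minimal non-strictly $f$-degenerate pair.

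Now for the structure. By \autoref{L}, $G$ is connected and $f(v, 1) + f(v, 2) \le \deg_{G}(v)$ for each $v$, so $\delta(G) \ge 5$; equivalently, a vertex of degree at most $4$ is reducible --- place it in the $3$-degenerate part, or, if it has no neighbor there, in the independent part. Hence the set $\mathcal{D}$ of vertices meeting the degree-condition with equality is precisely the set of degree-$5$ vertices, and all remaining vertices have degree at least $6$. We then feed $2$-connected subgraphs made up of degree-$5$ vertices into the Gallai-type theorem of \autoref{sec:MINIMAL}: for such an $F$, the graph $G[V(F)]$ is a cycle, a complete graph, or has every vertex joined to at most $\max_{q} f(v, q) = 4$ vertices of $V(F)$; and since $K_{5}$ and $K_{6}$ are non-planar, the complete-graph case never occurs. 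Applied with $F = G[N_{G}[v]]$ this already rules out, for instance, a degree-$5$ vertex $v$ whose closed neighborhood $N_{G}[v]$ induces a $2$-connected subgraph and whose neighbors all have degree $5$, because that subgraph is neither a cycle nor complete yet $v$ has five neighbors inside it. Pushing the same idea through larger $2$-connected patches of degree-$5$ vertices, and supplementing it with a short list of further small configurations --- each excluded by a direct extension argument in the spirit of \autoref{THM-5.1} --- yields the family of reducible configurations.

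For the discharging, fix a plane embedding of $G$, give each vertex $v$ the charge $\deg_{G}(v) - 6$ and each face $h$ the charge $2\deg(h) - 6$, so that by Euler's formula the total charge is $-12$, while every face and every vertex of degree at least $6$ starts with nonnegative charge and every degree-$5$ vertex starts with charge $-1$. We then move charge from high-degree vertices and long faces toward degree-$5$ vertices, along rules calibrated so that the forbidden configurations force each degree-$5$ vertex to collect at least $1$, and we check that no vertex or face drops below $0$. Summing the final charges gives a nonnegative total, contradicting $-12$, which completes the argument.

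The hard part will be the configuration analysis: distilling from \autoref{L} and the Gallai-type theorem a family of reducible configurations rich enough that some choice of discharging rules can succeed, and then tuning those rules so that precisely the unit deficit at each degree-$5$ vertex is absorbed. Since the bound is essentially tight, the discharging has almost no slack, so the configuration list must be sharp; reconstructing Thomassen's original analysis is where the real effort lies.
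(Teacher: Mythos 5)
There is a genuine gap: the proposal is a proof skeleton whose essential content is deferred rather than supplied. The setup is fine --- recasting the statement as the existence of a strictly $f$-degenerate transversal of the ID-cover with $f(v,1)=1$, $f(v,2)=4$, taking a vertex-minimal counterexample, and invoking \autoref{L} to get $\delta(G)\geq 5$ are all correct steps, and the single application of the Gallai-type theorem to a degree-$5$ vertex whose closed neighborhood is $2$-connected with all neighbors of degree $5$ is a legitimate reduction. But everything after that is a placeholder: "a short list of further small configurations", "rules calibrated so that the forbidden configurations force each degree-$5$ vertex to collect at least $1$", and the closing admission that "reconstructing Thomassen's original analysis is where the real effort lies" mean that no reducible-configuration list is exhibited, no discharging rule is written down, and nothing is verified. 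The entire mathematical content of the theorem is in exactly the part you have skipped. Note also that the paper itself does not prove this statement --- it cites Thomassen --- and it lists the closely related cover version (sum of $f$-values at least $5$) as an \emph{open conjecture} in Problem 5, which is a strong signal that the general machinery of Sections 2--4 does not simply grind out this result.

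There is also a concrete structural worry about the proposed discharging framework itself. With charges $\deg_{G}(v)-6$ on vertices and $2\deg(h)-6$ on faces, a planar triangulation of minimum degree $5$ has no positive charge anywhere: triangular faces carry $0$, degree-$5$ vertices carry $-1$, and degree-$6$ vertices carry $0$. So for large portions of the graph class there is simply no charge available to send to the deficient vertices, and the argument must instead locate a reducible configuration outright; whether your configurations do this in all cases is precisely what is unproven. Thomassen's actual proof is not a discharging argument at all but a delicate induction on near-triangulations with a strengthened boundary hypothesis (in the spirit of his $5$-choosability proof), which is further evidence that the route you sketch cannot be completed by routine tuning. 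As written, the proposal does not establish the theorem.
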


As a consequence, we obtain the following corollary. 
\begin{corollary}
Every planar graph can be partitioned into a strictly $\ell_{1}$-degenerate graph and a strictly $\ell_{2}$-degenerate graph whenever $\ell_{1} + \ell_{2} = 5$ and $\ell_{1}, \ell_{2}$ are positive integers. 
\end{corollary}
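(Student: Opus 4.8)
The plan is to derive the corollary directly from the two theorems of Thomassen stated immediately above, after translating the paper's ``strictly $\kappa$-degenerate'' terminology into the classical language of degeneracy. Since the conclusion is symmetric under interchanging $\ell_{1}$ and $\ell_{2}$, I would first reduce to the two cases $(\ell_{1}, \ell_{2}) = (1, 4)$ and $(\ell_{1}, \ell_{2}) = (2, 3)$; the cases $(3, 2)$ and $(4, 1)$ are then obtained by relabelling the two parts of the partition.

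Next I would recall from the introduction the basic equivalence that a graph is strictly $\kappa$-degenerate if and only if it is $(\kappa - 1)$-degenerate, together with its low-order instances: strictly $1$-degenerate means edgeless (an independent set), strictly $2$-degenerate means a forest, strictly $3$-degenerate means $2$-degenerate, and strictly $4$-degenerate means $3$-degenerate. With these identifications in hand, the case $(\ell_{1}, \ell_{2}) = (1, 4)$ is exactly a partition of $V(G)$ into an independent set and a $3$-degenerate induced subgraph, which is supplied by Thomassen \cite{MR1866722}; and the case $(\ell_{1}, \ell_{2}) = (2, 3)$ is exactly a partition into a forest and a $2$-degenerate induced subgraph, which is supplied by Thomassen \cite{MR1358992}. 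In each case the two pieces of the Thomassen partition are precisely the strictly $\ell_{1}$- and strictly $\ell_{2}$-degenerate induced subgraphs required, so nothing further is needed.

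I do not expect a genuine obstacle here: the whole content is the correct matching of definitions, and the substantive input --- the existence of these planar partitions --- is exactly what the quoted Thomassen theorems provide. The only point that needs a moment's care is remembering that ``strictly $2$-degenerate'' coincides with ``forest'' rather than with ``$2$-degenerate'', so that it is the pair $(2, 3)$, and not some other pair summing to $5$, that is handled by the forest-plus-$2$-degenerate decomposition; once this bookkeeping is made explicit the corollary is immediate.
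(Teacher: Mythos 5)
Your proposal is correct and is exactly the derivation the paper intends: the corollary is stated as an immediate consequence of the two Thomassen theorems, and the only content is the translation ``strictly $\kappa$-degenerate $=$ $(\kappa-1)$-degenerate,'' which you carry out correctly for the two essentially distinct cases $(\ell_{1},\ell_{2})=(1,4)$ and $(2,3)$. Your explicit caution that strictly $2$-degenerate means ``forest'' (i.e.\ $1$-degenerate) is the right bookkeeping point and matches the paper's setup.
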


Here, we pose the following conjecture which extends the above corollary if the conjecture is true. 
\begin{conjecture}
Let $G$ be a planar graph and $(H, f)$ be a positive-valued cover. If $s \geq 2$ and $f(v, 1) + \dots + f(v, s) \geq 5$ for each $v \in V(G)$, then $H$ has a strictly $f$-degenerate transversal. 
\end{conjecture}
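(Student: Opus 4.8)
The plan is a minimal-counterexample argument that feeds the Gallai-type structural theorem of \autoref{sec:MINIMAL} into a discharging scheme on the plane graph. Suppose the conjecture fails and let $(H,f)$ be a counterexample with $|V(G)|$ as small as possible; then $(H,f)$ is a minimal non-strictly $f$-degenerate pair, so \autoref{L} gives that $G$ is connected and $\deg_{G}(v) \geq f(v,1)+f(v,2)+\dots+f(v,\kappa) \geq 5$ for every $v \in V(G)$, \ie $\delta(G) \geq 5$. Since $G$ is planar, the set $D_{5}$ of degree-$5$ vertices is nonempty; for each $v \in D_{5}$ we have $f(v,1)+\dots+f(v,\kappa) = 5 = \deg_{G}(v)$, so $v \in \mathcal{D}$, and since $\kappa \geq 2$ with every value of $f$ positive we get $\max_{q} f(v,q) \leq 4$. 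Specialising to $\kappa = 2$ and an ID-cover with constant $f$ recovers the Thomassen-type decompositions (forest plus $2$-degenerate graph, independent set plus $3$-degenerate graph), so the conjecture is a genuine common strengthening of those.

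Next I would extract local structure around low-degree vertices. By the structural theorem of \autoref{sec:MINIMAL}, for any $2$-connected subgraph $F$ of $G$ with $V(F) \subseteq \mathcal{D}$, the graph $G[V(F)]$ is a cycle, or a complete graph, or satisfies $\deg_{G[V(F)]}(v) \leq \max_{q} f(v,q)$ for all $v \in V(F)$; planarity forces complete subgraphs to have at most $4$ vertices. Combined with \autoref{MR}, this should translate into a family of reducible configurations: if a degree-$5$ vertex lies in a sufficiently dense $2$-connected block of $\mathcal{D}$-vertices (in particular of degree-$5$ vertices), then the corresponding subcover is non-constructible, and a strictly $f$-degenerate transversal of the rest extends exactly as in the extension step of \autoref{GE}, a contradiction. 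Pinning down the precise catalogue of these configurations is the technical heart.

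Then comes discharging. Assign to each vertex $v$ the charge $\deg_{G}(v)-6$ and to each face $\phi$ the charge $2\deg_{G}(\phi)-6$; by Euler's formula the total charge is $-12$, every face has nonnegative charge, and only the vertices of $D_{5}$ carry negative charge, namely $-1$. One designs discharging rules sending charge to degree-$5$ vertices from incident long faces and from nearby vertices of degree at least $7$, in the spirit of the arguments behind the forest-plus-$2$-degenerate and independent-set-plus-$3$-degenerate decompositions, and uses the reducible configurations above to guarantee that each degree-$5$ vertex is adjacent to, or close to, enough donors to finish with nonnegative charge. Since the total is $-12 < 0$, this is the desired contradiction.

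The hard part — and the reason this is still only a conjecture — is precisely the reducibility/discharging step. Unlike ordinary or list colouring, an extension in the transversal setting must defeat the constructible obstructions classified in \autoref{MR}, and a configuration that is reducible for list vertex arboricity need not be reducible here; the excerpt itself records such a configuration in \autoref{F36}. A sensible program is to first settle the two $\kappa=2$ cases (forest plus $2$-degenerate, and independent set plus $3$-degenerate) in the cover setting, and then reduce general $\kappa$ and general positive $f$ with $\sum_{q} f(v,q) = 5$ to these by an induction on $\kappa$ that peels off one part at a time, exploiting the slack from \autoref{MSMR} whenever $\sum_{q} f(v,q) > \deg_{G}(v)$ arises during the peeling.
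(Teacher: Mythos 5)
This statement is one of the paper's open conjectures: the paper offers no proof of it, and your text is a research program rather than a proof, so there is nothing here that could be checked line by line. The genuine gap is exactly where you place it, but it is worth being concrete about why the sketch cannot close as written. After \autoref{L} forces $\delta(G)\geq 5$, the discharging scheme you describe (charge $\deg_{G}(v)-6$ on vertices, $2\deg(\phi)-6$ on faces, donors being long faces and vertices of degree at least $7$) has nothing to give in the worst case: a $5$-regular planar triangulation such as the icosahedron has every vertex at charge $-1$, every face at charge $0$, and no donors whatsoever. So the entire burden falls on exhibiting reducible configurations inside such graphs, and your proposal identifies none. The structural theorem of \autoref{sec:MINIMAL} does not do this work for you either: it only constrains $2$-connected subgraphs $F$ with $V(F)\subseteq\mathcal{D}$, and the conclusion ``$\deg_{G'}(v)\leq\max_{q}f(v,q)$'' is not a contradiction when $\max_{q}f(v,q)$ can be as large as $4$ while $G'$ is, say, a small piece of the neighbourhood of a degree-$5$ vertex. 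The paper itself records (\autoref{F36} and the surrounding discussion) that a configuration reducible for list vertex arboricity can fail to be reducible in the cover setting, so the ``sensible program'' of settling the two $\kappa=2$ cases and then inducting on $\kappa$ inherits the same unsolved reducibility problem rather than circumventing it; moreover no mechanism is given for the peeling step to preserve the hypothesis $\sum_{q}f(v,q)\geq 5$ on the remaining parts. In short: the framing via \autoref{L}, \autoref{MR} and \autoref{GE} is the natural one and matches how the paper attacks its other applications, but the technical heart --- a catalogue of reducible configurations strong enough to defeat $5$-regular triangulations, together with matching discharging rules --- is entirely absent, which is precisely why the statement remains a conjecture.
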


Recently, Nakprasit and Nakprasit \cite{MR4114324} proved the following result which partially supports the conjecture. 
\begin{theorem}
Let $G$ be a planar graph. Let $H$ be a cover of $G$ and $f$ be a function from $V(H)$ to $\{0, 1, 2\}$. If $f(v, 1) + f(v, 2) + \dots + f(v, s) \geq 5$ for each $v \in V(G)$, then $H$ has a strictly $f$-degenerate transversal. 
\end{theorem}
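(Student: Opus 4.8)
The plan is to argue by a minimal counterexample together with a discharging argument on a plane embedding, in the same spirit as the proofs of \autoref{THM-5.1} and \autoref{TT}. Suppose the statement fails and choose a planar graph $G$ with a valued cover $(H,f)$, $f\colon V(H)\to\{0,1,2\}$ and $f(v,1)+\dots+f(v,\kappa)\ge 5$ for every $v\in V(G)$, such that $H$ has no strictly $f$-degenerate transversal and $|V(G)|$ is minimum over all such examples. For every $v$ the graph $G-v$ is planar and still satisfies the hypothesis, so by minimality $(H-X_v,f)$ has a strictly $f$-degenerate transversal; hence $(H,f)$ is a minimal non-strictly $f$-degenerate pair, and by \autoref{L} we get $f(v,1)+\dots+f(v,\kappa)\le\deg_G(v)$, so $\delta(G)\ge 5$. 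Since lowering capacities keeps $f$ valued in $\{0,1,2\}$ and preserves the counterexample, we may also assume $f(v,1)+\dots+f(v,\kappa)=5$ for every $v$; then the vertices of degree $5$ are exactly the vertices of $G$ lying in $\mathcal{D}$, and the structural result of \autoref{sec:MINIMAL} on $2$-connected subgraphs with $V(F)\subseteq\mathcal{D}$ tells us that each block of the subgraph they induce is a cycle or a complete graph of order at most $4$ (order $5$ is excluded by planarity).

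The heart of the argument is a list of reducible configurations. The general reduction scheme is: given a candidate configuration $S\subseteq V(G)$, delete $X_S$ and, applying minimality to each component of $G-S$, obtain a strictly $f$-degenerate transversal $R$ of $H-X_S$; choosing $R$ so as to minimize $\mathrm{def}(R)$, exactly as in the proof of \autoref{SMR}, forces $\deg_R(s',q)\le f(s',q)$ at every vertex $s'$ adjacent to $S$. One then extends $R$ over $S$ by ordering the vertices of $S$ as $s_1,\dots,s_k$ and picking colors $p_i$ so that each $(s_i,p_i)$ has fewer than $f(s_i,p_i)$ neighbours in $R\cup\{(s_{i+1},p_{i+1}),\dots,(s_k,p_k)\}$; prepending $s_1,\dots,s_k$ to an $f$-removing order of $H[R]$ then yields an $f$-removing order of the whole, a contradiction. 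The number of $R$-neighbours of $(s_i,p_i)$ is at most $\deg_G(s_i)$ minus the number of neighbours of $s_i$ inside $S$, which is small precisely because the low-capacity vertices of the configuration have been deleted; the extra room needed comes from the fact that each $\mathscr{M}_{uv}$ is a matching, so $(s_i,p_i)$ has at most one neighbour in each fibre $X_{s_j}$, and this lets us re-route the choice of $p_i$ past the single adjacency that would otherwise be fatal. Carrying this out, I would show that the degree-$5$ vertices cannot form dense clusters and that a degree-$5$ vertex cannot have all (or almost all) of its neighbours of small degree; producing a list of configurations that is simultaneously reducible and large enough for discharging is the technical core.

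With the reducible configurations established, I would close by discharging. Assign to each vertex $v$ the charge $\deg_G(v)-6$ and to each face the charge $2\deg(f)-6$; by Euler's formula the total is exactly $-12$. Only the degree-$5$ vertices have negative charge, so the discharging rules move surplus toward them: each face of length at least $4$ and each vertex of degree at least $7$ transfers a prescribed amount of its surplus to the incident, respectively adjacent, degree-$5$ vertices, the amounts being calibrated against the reducible configurations (which bound how many degree-$5$ vertices any face or high-degree vertex can be forced to feed and guarantee that every degree-$5$ vertex has enough donors). Verifying that after discharging every vertex and every face has nonnegative charge contradicts the total being $-12$, which completes the proof.

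I expect the main obstacle to be the reducibility step rather than the discharging. Because $H$ is a cover and not an ordinary graph, the extension arguments cannot rely on merely counting available colors: one must keep track, for each deleted vertex and each candidate color, of which surviving neighbours lie in which fibre, and the delicate sub-cases are the ``perfectly tight'' ones, in which a deleted degree-$5$ vertex has all its surviving neighbours hitting its fibre and its entire deficiency concentrated in a single color that happens to be matched — and this for every admissible choice of the sub-transversal $R$. Handling these cases forces either enlarging the configuration or performing an exchange argument on $R$ itself, and balancing the resulting configuration list against a discharging scheme that actually closes is where the real difficulty lies; note that, in contrast with \autoref{TT}(ii), the bound $\delta(G)\ge 5$ alone does not contradict planarity, so genuine reducible configurations are unavoidable.
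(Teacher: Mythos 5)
The paper does not actually prove this statement; it is quoted from Nakprasit and Nakprasit \cite{Nakprasit2019}, so there is no internal proof to compare against. Judged on its own terms, your proposal is a strategy rather than a proof: the list of reducible configurations is never specified, their reducibility is never verified, and the discharging rules are never written down or checked, so the entire technical core is deferred. That alone would make the attempt incomplete, but there is a more serious, structural problem with the plan itself.

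The minimal-counterexample-plus-discharging framework you describe cannot be made to close for this theorem. With charges $\deg_G(v)-6$ on vertices and $2\deg(f)-6$ on faces, a planar triangulation whose vertices all have degree $5$ or $6$ carries no positive charge anywhere, and such triangulations exist in which the twelve degree-$5$ vertices are pairwise non-adjacent and arbitrarily far apart (the duals of isolated-pentagon fullerenes). For these graphs the set $\mathcal{D}$ induces an independent set, so the Gallai-type theorem of \autoref{sec:MINIMAL} gives no traction, and there are no donors for any discharging rule; your scheme would therefore be forced to declare ``an isolated degree-$5$ vertex all of whose neighbours have degree $6$'' reducible. It is not reducible by the deletion-plus-greedy argument you outline: take $\kappa=5$ and $f(v,q)=1$ for every $q$ (which is permitted, since the caps only bound $f$ above by $2$), and matchings under which the five vertices of $R$ lying in the neighbouring fibres occupy five distinct vertices of $X_v$; then every $(v,q)$ already has $f(v,q)$ neighbours in $R$, the minimization of $\mathrm{def}(R)$ does nothing because the obstruction sits in $X_v$ rather than at the neighbours, and no local exchange is guaranteed to free a slot. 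This is exactly the hard case of DP-$5$-colorability of planar graphs, which the present theorem contains as the special case $f\colon V(H)\to\{0,1\}$; no discharging proof of $5$-choosability or DP-$5$-colorability is known, and the obstruction above is the reason. The established route through this case is a Thomassen-style induction on near-triangulations with a precolored edge on the outer face and reduced capacity sums for the remaining boundary vertices, as in Dvo{\v{r}}{\'{a}}k and Postle's proof of DP-$5$-colorability; your argument needs to be rebuilt on that template rather than on discharging.
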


Li and Wang \cite{Li} further extend it to the class of $K_{5}$-minor-free or $K_{3, 3}$-minor-free graphs. 

\begin{theorem}
Let $G$ be a $K_{5}$-minor-free or $K_{3, 3}$-minor-free graph. Let $H$ be a cover of $G$ and $f$ be a function from $V(H)$ to $\{0, 1, 2\}$. If $f(v, 1) + f(v, 2) + \dots + f(v, s) \geq 5$ for each $v \in V(G)$, then $H$ has a strictly $f$-degenerate transversal. 
\end{theorem}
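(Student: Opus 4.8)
The plan is to argue by contradiction through a vertex-minimal counterexample, combining the structural results of this section with the sparsity of the two minor-closed classes. Suppose $(H,f)$ is a counterexample with $|V(G)|$ minimum; then $(H,f)$ is a minimal non-strictly $f$-degenerate pair, so \autoref{L} gives that $G$ is connected and $\sum_{q} f(v,q) \le \deg_{G}(v)$ for every $v \in V(G)$. Together with the hypothesis $\sum_{q} f(v,q) \ge 5$ this forces $\delta(G) \ge 5$, and at every vertex $v$ with $\deg_{G}(v)=5$ we get $\sum_{q} f(v,q)=5$, so $v \in \mathcal{D}$ and $\max_{q} f(v,q) \le 2$ because $f$ is $\{0,1,2\}$-valued. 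Thus the whole difficulty concentrates on the degree-$5$ vertices.

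First I would extract global sparsity from the forbidden minor. A $K_{5}$-minor-free graph on $n$ vertices has at most $3n-6$ edges and a $K_{3,3}$-minor-free graph at most $3n-5$ edges, so $\sum_{v}(\deg_{G}(v)-6)=2|E(G)|-6n$ is at most $-12$, respectively $-10$. With $\delta(G)\ge 5$ this yields $n_{5}\ge 12 + \sum_{i\ge 7}(i-6)n_{i}$ (respectively $n_{5}\ge 10+\cdots$), where $n_{i}$ counts degree-$i$ vertices; in particular there are many degree-$5$ vertices, all lying in $\mathcal{D}$, and these carry the negative charge.

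Next I would pin down the local structure of these vertices with the Gallai-type structural theorem proved above: any $2$-connected subgraph $F$ with $V(F)\subseteq\mathcal{D}$ has $G[V(F)]$ equal to a cycle, a complete graph, or a graph of maximum degree at most $\max_{q}f(v,q)\le 2$; since a $2$-connected graph of maximum degree at most two is a cycle, every block of $G[\mathcal{D}]$ is a cycle or a complete graph, and $K_{5}$-minor-freeness caps any complete block at $K_{4}$. This Gallai-tree-like description, combined with minimality applied through the degree-controlled strengthening \autoref{SMR}, rules out a degree-$5$ vertex being surrounded by too many low-degree neighbours: such a configuration is reducible, since one removes it, solves the smaller cover, and re-inserts it because some $(v,q)$ still has fewer than $f(v,q)$ chosen neighbours. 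I would then run a discharging argument with initial charge $\mu(v)=\deg_{G}(v)-6$, whose total is at most $-12$ (respectively $-10$), sending charge from vertices of degree at least $7$ to their degree-$5$ neighbours; the block structure guarantees enough high-degree mass to drive every final charge nonnegative, contradicting the negative total.

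The main obstacle is this final discharging step: verifying nonnegativity needs a careful case analysis of how degree-$5$ and degree-$6$ vertices cluster, with the short-cycle and $K_{4}$ blocks as the delicate cases. A cleaner alternative, at least for the $K_{3,3}$-minor-free case, is to first reduce the minimal counterexample to a $3$-connected one by splitting across separating cliques of size at most two and gluing partial transversals via \autoref{SMR} (mirroring \autoref{2-con}); by Wagner's theorem a $3$-connected $K_{3,3}$-minor-free graph is planar or $K_{5}$, and $\delta(G)\ge 5$ excludes $K_{5}$, so $G$ is planar and the result follows from the planar case established in the preceding theorem. For $K_{5}$-minor-freeness the analogous reduction yields $3$-connected pieces that are planar or the Wagner graph $V_{8}$ (a Möbius ladder), but these are joined along triangles, so the pieces must be reassembled with the degree budget $\sum_{q}f(v,q)=5$ reapportioned to match the reduced degrees on each side of every $3$-cut; making this triangle clique-sum gluing work through \autoref{SMR}, together with checking $V_{8}$ by hand, is the crux of that route.
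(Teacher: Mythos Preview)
The paper does not contain a proof of this theorem. It is stated in the final ``Open discussions'' section as a result of Li and Wang, attributed to an external reference, and nothing beyond the statement appears here. There is therefore no in-paper proof to compare your proposal against.

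On the merits of your sketch: the opening is sound---a minimal counterexample together with \autoref{L} forces $\delta(G)\ge 5$, the edge bounds $|E|\le 3n-6$ (resp.\ $3n-5$) do yield an abundance of degree-$5$ vertices, and the Gallai-type block description is applied correctly. But what follows is, by your own admission, not a proof. The discharging route is only a plan whose ``main obstacle'' you leave unresolved. The Wagner-decomposition route has a concrete gap: if you split a minimal counterexample at a $2$-separator $\{u,v\}$ into pieces $G_1,G_2$, each piece still satisfies the hypotheses and hence by minimality each $H_i$ has a strictly $f$-degenerate transversal $R_i$, but $R_1$ and $R_2$ need not agree at $u$ and $v$. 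Invoking \autoref{SMR} does not fix this, since \autoref{SMR} presupposes the degree condition $\sum_q f(\cdot,q)\ge\deg$ globally and gives no mechanism for prescribing the colour at two vertices simultaneously; the argument in \autoref{2-con} that you cite works precisely because only one vertex is shared. The $K_5$-minor-free case is worse, as you note, because the clique-sums are along triangles and the budget $\sum_q f(v,q)=5$ must be reapportioned across three vertices at once. None of this is necessarily fatal, but the proposal as written is an outline with two explicitly open endings rather than a proof.
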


\vskip 0mm \vspace{0.3cm} \noindent\textbf{Acknowledgments.} Many thanks to Sittitrai and Nakprasit for bringing \cite{MR1743629} to our attention. This work was supported by the Fundamental Research Funds for Universities in Henan (YQPY20140051). The authors would like to thank the anonymous referees for their valuable comments.


\begin{thebibliography}{10}

\bibitem{MR3518419}
A.~Bernshteyn, The asymptotic behavior of the correspondence chromatic number,
  Discrete Math. 339~(11) (2016) 2680--2692.

\bibitem{MR3957361}
A.~Bernshteyn, The {J}ohansson-{M}olloy theorem for {DP}-coloring, Random
  Structures Algorithms 54~(4) (2019) 653--664.

\bibitem{MR3889157}
A.~Bernshteyn and A.~V. Kostochka, On the differences between a {DP}-coloring
  and a list coloring, Siberian Adv. Math. 29~(3) (2019) 183--189.

\bibitem{MR3686937}
A.~Bernshteyn, A.~V. Kostochka and S.~P. Pron, On {DP}-coloring of graphs and
  multigraphs, Sib. Math. J. 58~(1) (2017) 28--36.

\bibitem{MR3679840}
A.~Bernshteyn, A.~V. Kostochka and X.~Zhu, D{P}-colorings of graphs with high
  chromatic number, European J. Combin. 65 (2017) 122--129.

\bibitem{MR4043754}
A.~Bernshteyn, A.~V. Kostochka and X.~Zhu, Fractional {DP}-colorings of sparse
  graphs, J. Graph Theory 93~(2) (2020) 203--221.

\bibitem{MR541961}
B.~Bollob\'as and B.~Manvel, Optimal vertex partitions, Bull. London Math. Soc.
  11~(2) (1979) 113--116.

\bibitem{MR0498204}
O.~V. Borodin, The decomposition of graphs into degenerate subgraphs, Diskret.
  Analiz. ~(28) (1976) 3--11, 78.

\bibitem{MR3004485}
O.~V. Borodin, Colorings of plane graphs: {A} survey, Discrete Math. 313~(4)
  (2013) 517--539.

\bibitem{MR2586624}
O.~V. Borodin and A.~O. Ivanova, Planar graphs without triangular 4-cycles are
  4-choosable, Sib. \`Elektron. Mat. Izv. 5 (2008) 75--79.

\bibitem{MR2566928}
O.~V. Borodin and A.~O. Ivanova, Planar graphs without 4-cycles adjacent to
  3-cycles are list vertex 2-arborable, J. Graph Theory 62~(3) (2009) 234--240.

\bibitem{MR1743629}
O.~V. Borodin, A.~V. Kostochka and B.~Toft, Variable degeneracy: extensions of
  {B}rooks' and {G}allai's theorems, Discrete Math. 214~(1-3) (2000) 101--112.

\bibitem{MR3761240}
H.~Cai, J.~Wu and L.~Sun, Vertex arboricity of planar graphs without
  intersecting 5-cycles, J. Comb. Optim. 35~(2) (2018) 365--372.

\bibitem{MR0239996}
G.~Chartrand and H.~V. Kronk, The point-arboricity of planar graphs, J. London
  Math. Soc. 44 (1969) 612--616.

\bibitem{MR0236049}
G.~Chartrand, H.~V. Kronk and C.~E. Wall, The point-arboricity of a graph,
  Israel J. Math. 6 (1968) 169--175.

\bibitem{MR3508765}
M.~Chen, L.~Huang and W.~Wang, List vertex-arboricity of toroidal graphs
  without 4-cycles adjacent to 3-cycles, Discrete Math. 339~(10) (2016)
  2526--2535.

\bibitem{MR2889524}
M.~Chen, A.~Raspaud and W.~Wang, Vertex-arboricity of planar graphs without
  intersecting triangles, European J. Combin. 33~(5) (2012) 905--923.

\bibitem{MR3233411}
I.~Choi and H.~Zhang, Vertex arboricity of toroidal graphs with a forbidden
  cycle, Discrete Math. 333 (2014) 101--105.

\bibitem{MR0175809}
R.~J. Duffin, Topology of series-parallel networks, J. Math. Anal. Appl. 10
  (1965) 303--318.

\bibitem{MR3758240}
Z.~Dvo\v{r}\'{a}k and L.~Postle, Correspondence coloring and its application to
  list-coloring planar graphs without cycles of lengths 4 to 8, J. Combin.
  Theory Ser. B 129 (2018) 38--54.

\bibitem{MR593902}
P.~Erd\H{o}s, A.~L. Rubin and H.~Taylor, Choosability in graphs, in:
  Proceedings of the {W}est {C}oast {C}onference on {C}ombinatorics, {G}raph
  {T}heory and {C}omputing ({H}umboldt {S}tate {U}niv., {A}rcata, {C}alif.,
  1979), Congress. Numer., XXVI, Utilitas Math., Winnipeg, Man., 1980, pp.
  125--157.

\bibitem{MR2538645}
B.~Farzad, Planar graphs without 7-cycles are 4-choosable, SIAM J. Discrete
  Math. 23~(3) (2009) 1179--1199.

\bibitem{MR3477382}
T.~Fleiner and G.~Wiener, Coloring signed graphs using {DFS}, Optim. Lett.
  10~(4) (2016) 865--869.

\bibitem{MR0188099}
T.~Gallai, Kritische {G}raphen. {I}, Magyar Tud. Akad. Mat. Kutat\'{o} Int.
  K\"{o}zl. 8 (1963) 165--192.

\bibitem{MR0116320}
H.~Gr\"otzsch, Zur {T}heorie der diskreten {G}ebilde. {VII}. {E}in
  {D}reifarbensatz f\"ur dreikreisfreie {N}etze auf der {K}ugel, Wiss. Z.
  Martin-Luther-Univ. Halle-Wittenberg. Math.-Nat. Reihe 8 (1959) 109--120.

\bibitem{MR0067468}
F.~Harary, On the notion of balance of a signed graph, Michigan Math. J. 2
  (1953/54) 143--146.

\bibitem{MR2926103}
D.~Huang, W.~C. Shiu and W.~Wang, On the vertex-arboricity of planar graphs
  without 7-cycles, Discrete Math. 312~(15) (2012) 2304--2315.

\bibitem{MR3320048}
L.~Huang, M.~Chen and W.~Wang, Toroidal graphs without 3-cycles adjacent to
  5-cycles have list vertex-arboricity at most 2, Int. J. Math. Stat. 16~(1)
  (2015) 97--105.

\bibitem{MR3425977}
L.~Jin, Y.~Kang and E.~Steffen, Choosability in signed planar graphs, European
  J. Combin. 52 (2016) 234--243.

\bibitem{MR4108299}
S.-J. Kim, R.~Liu and G.~Yu, Planar graphs without 7-cycles and butterflies are
  {DP}-4-colorable, Discrete Math. 343~(8) (2020) 111714.

\bibitem{MR3948125}
S.-J. Kim and K.~Ozeki, A note on a {B}rooks' type theorem for {DP}-coloring,
  J. Graph Theory 91~(2) (2019) 148--161.

\bibitem{MR3969022}
S.-J. Kim and X.~Yu, Planar graphs without 4-cycles adjacent to triangles are
  {DP}-4-colorable, Graphs Combin. 35~(3) (2019) 707--718.

\bibitem{MR1425799}
A.~V. Kostochka, M.~Stiebitz and B.~Wirth, The colour theorems of {B}rooks and
  {G}allai extended, Discrete Math. 162~(1-3) (1996) 299--303.

\bibitem{MR0360334}
H.~V. Kronk and J.~Mitchem, Critical point-arboritic graphs, J. London Math.
  Soc. (2) 9 (1974/75) 459--466.

\bibitem{Li}
L.~Li and T.~Wang, An extension of {Thomassen}’s result on choosability,
  arXiv:2111.15220,   \url{https://arxiv.org/abs/2111.15220}.

\bibitem{Li2019}
R.~Li and T.~Wang, Variable degeneracy on toroidal graphs, arXiv:1907.07141,
  \url{https://arxiv.org/abs/1907.07141}.

\bibitem{MR4294211}
R.~Li and T.~Wang, D{P}-4-coloring of planar graphs with some restrictions on
  cycles, Discrete Math. 344~(11) (2021) 112568.

\bibitem{Lv2019}
X.~Li, J.~Lv and M.~Zhang, Every planar graph without intersecting 5-cycles is
  {DP}-4-colorable,   submitted for publication.

\bibitem{MR2292964}
X.~Luo, The 4-choosability of toroidal graphs without intersecting triangles,
  Inform. Process. Lett. 102~(2-3) (2007) 85--91.

\bibitem{MR1037427}
M.~Matsumoto, Bounds for the vertex linear arboricity, J. Graph Theory 14~(1)
  (1990) 117--126.

\bibitem{MR3484719}
E.~M\'a\v{c}ajov\'a, A.~Raspaud and M.~\v{S}koviera, The chromatic number of a
  signed graph, Electron. J. Combin. 23~(1) (2016) Paper 1.14.

\bibitem{MR4114324}
K.~M. Nakprasit and K.~Nakprasit, A generalization of some results on list
  coloring and {DP}-coloring, Graphs Combin. 36~(4) (2020) 1189--1201.

\bibitem{MR2408378}
A.~Raspaud and W.~Wang, On the vertex-arboricity of planar graphs, European J.
  Combin. 29~(4) (2008) 1064--1075.

\bibitem{MR4020554}
T.~Schweser, D{P}-degree colorable hypergraphs, Theoret. Comput. Sci. 796
  (2019) 196--206.

\bibitem{MR3612419}
T.~Schweser and M.~Stiebitz, Degree choosable signed graphs, Discrete Math.
  340~(5) (2017) 882--891.

\bibitem{MR4345150}
P.~Sittitrai and K.~Nakprasit, An analogue of {DP}-coloring for variable
  degeneracy and its applications, Discuss. Math. Graph Theory 42~(1) (2022)
  89--99.

\bibitem{MR1358992}
C.~Thomassen, Decomposing a planar graph into degenerate graphs, J. Combin.
  Theory Ser. B 65~(2) (1995) 305--314.

\bibitem{MR1866722}
C.~Thomassen, Decomposing a planar graph into an independent set and a
  3-degenerate graph, J. Combin. Theory Ser. B 83~(2) (2001) 262--271.

\bibitem{MR0498216}
V.~G. Vizing, Coloring the vertices of a graph in prescribed colors, Diskret.
  Analiz. 29 (1976) 3--10.

\bibitem{MR1360130}
M.~Voigt, A not {$3$}-choosable planar graph without {$3$}-cycles, Discrete
  Math. 146~(1-3) (1995) 325--328.

\bibitem{Wang2019+}
Q.~Wang, T.~Wang and X.~Yang, Variable degeneracy of graphs with restricted
  structures, arXiv:2112.09334   \url{https://arxiv.org/abs/2112.09334}.

\bibitem{MR1935837}
W.~Wang and K.-W. Lih, Choosability and edge choosability of planar graphs
  without intersecting triangles, SIAM J. Discrete Math. 15~(4) (2002)
  538--545.

\bibitem{MR3862632}
Y.~Wang, M.~Chen and W.~Wang, A note on the list vertex arboricity of toroidal
  graphs, Discrete Math. 341~(12) (2018) 3344--3347.

\bibitem{MR2947381}
N.~Xue and B.~Wu, List point arboricity of graphs, Discrete Math. Algorithms
  Appl. 4~(2) (2012) 1250027.

\bibitem{MR3570576}
H.~Zhang, On list vertex 2-arboricity of toroidal graphs without cycles of
  specific length, Bull. Iranian Math. Soc. 42~(5) (2016) 1293--1303.

\end{thebibliography}
\end{document}